\let\mathcal\mathbcal
\titleformat{\section}[block]{\normalfont\large\bfseries\boldmath\centering}{\raggedright\makebox[1em][l]{\thesection.}}{.5em}{#1}
\titleformat{\subsection}[runin]{\normalfont\bfseries\boldmath}{\raggedright\makebox[1em][l]{\thesubsection.}}{1.25em}{#1.~\hbox{---}}
\titleformat{\subsubsection}[runin]{\normalfont\bfseries\boldmath}{\raggedright\makebox[1em][l]{\thesubsubsection.}}{1.25em}{#1.~\hbox{---}}
\renewenvironment{abstract}{%
\begin{center}
\begin{minipage}{.9\textwidth}\linespread{1.05}\selectfont\small
%\bfseries\abstractname.\hspace{1em}
\makebox[5em][l]{\bfseries\abstractname.~\hbox{---}}
\normalfont}
{\par\vspace{1em}
\end{minipage}
\end{center}
}
\newtheorem{thm}{\bfseries \upshape Theorem}
\newtheorem{lem}{Lemma}
\newtheorem{prop}{Proposition}
\newtheorem{cor}{Corollary}
\theoremstyle{definition}
\newtheorem{rem}{Remark}
\renewcommand{\le}{\leqslant}
\renewcommand{\ge}{\geqslant}
\newcommand{\ensembles}[1]{\mathbf{#1}}
	\newcommand{\N}{\ensembles{N}}
	\newcommand{\Z}{\ensembles{Z}}
	\newcommand{\R}{\ensembles{R}}
	\renewcommand{\P}{\ensembles{P}}
	\newcommand{\E}{\ensembles{E}}
	\newcommand{\Var}{\mathrm{Var}}
	\newcommand{\1}{\ensembles{1}}
\newcommand{\ind}[1]{\1_{\{#1\}}}
\newcommand{\tildep}[1]{\widetilde{#1}\vphantom{#1}}
\newcommand{\underlinep}[1]{\underline{#1}\vphantom{#1}}
\renewcommand{\Pr}[1]{\P\left(#1\right)}
\newcommand{\Prc}[2]{\P\left(#1 \;\middle|\; #2\right)}
\newcommand{\Es}[1]{\E\left[#1\right]}
\newcommand{\Esc}[2]{\E\left[#1 \;\middle|\; #2\right]}
\newcommand{\Map}{\mathbf{M}}
\newcommand{\PMap}{\mathbf{PM}}
\newcommand{\Tree}{\mathbf{T}}
\newcommand{\LTree}{\mathbf{LT}}
\newcommand{\Fn}{\Tree_{d_n}^{\varrho_n}}
\newcommand{\fn}{T_{d_n}^{\varrho_n}}
\newcommand{\tn}{T_{d_n}}
\newcommand{\Hfn}{H_{d_n}^{\varrho_n}}
\newcommand{\Wfn}{W_{d_n}^{\varrho_n}}
\newcommand{\infWfn}{\underlinep{W}_{d_n}^{\varrho_n}}
\newcommand{\Bfn}{B_{d_n}^{\varrho_n}}
\newcommand{\Btn}{B_{d_n}}
\newcommand{\bfn}{b_{d_n}^{\varrho_n}}
\newcommand{\Lfn}{L_{d_n}^{\varrho_n}}
\newcommand{\tLfn}{\tildep{L}_{d_n}^{\varrho_n}}
\newcommand{\tX}{\tildep{X}^{\varrho}}
\newcommand{\LFn}{\LTree_{d_n}^{\varrho_n}}
\newcommand{\Mn}{\Map_{d_n}^{\varrho_n}}
\newcommand{\mn}{M_{d_n}^{\varrho_n}}
\newcommand{\Vmn}{V(M_{d_n}^{\varrho_n})}
\newcommand{\PMn}{\PMap_{d_n}^{\varrho_n}}
\newcommand{\CRT}{\mathscr{T}}
\newcommand{\dCRT}{\mathscr{d}}
\newcommand{\pCRT}{\mathscr{p}}
\newcommand{\Bmap}{\mathscr{M}}
\newcommand{\dBmap}{\mathscr{D}}
\newcommand{\pBmap}{\mathscr{p}}
\newcommand{\dgr}{d_{\mathrm{gr}}}
\newcommand{\pgr}{p_{\mathrm{unif}}}
\newcommand{\h}{\mathrm{ht}}
\newcommand{\wid}{\mathrm{wid}}
\newcommand{\bb}{\mathbf{b}}
\renewcommand{\d}{\mathrm{d}}
\newcommand{\e}{\mathrm{e}}
\newcommand{\m}{\mathbf{m}}
\newcommand{\q}{\mathbf{q}}
\renewcommand{\i}{\mathrm{i}}
\newcommand{\edges}{E}
\newcommand{\vertices}{V}
\newcommand{\Cont}{\mathsf{Cont}}
\newcommand{\LR}{\mathsf{LR}}
\newcommand{\RR}{\mathsf{R}}
\newcommand{\LL}{\mathsf{L}}
\newcommand{\cv}[1][n]{\enskip\mathop{\longrightarrow}^{}_{#1 \to \infty}\enskip}
\newcommand{\cvloi}[1][n]{\enskip\mathop{\longrightarrow}^{(d)}_{#1 \to \infty}\enskip}
\newcommand{\cvps}[1][n]{\enskip\mathop{\longrightarrow}^{a.s.}_{#1 \to \infty}\enskip}
\newcommand{\cvproba}[1][n]{\enskip\mathop{\longrightarrow}^{\P}_{#1 \to \infty}\enskip}
\newcommand{\equivalent}[1][n]{\enskip\mathop{\thicksim}^{}_{#1 \to \infty}\enskip}
\newcommand{\eqloi}{\enskip\mathop{=}^{(d)}\enskip}
\title{On scaling limits of random trees and maps with a prescribed degree sequence}
\author{Cyril \textsc{Marzouk}\thanks{CMAP, \'{E}cole polytechnique.\hfill  \href{mailto:cyril.marzouk@polytechnique.edu}{\texttt{cyril.marzouk@polytechnique.edu}}
\newline
This project has received funding from the Fondation Mathématique Jacques Hadamard as well as, thanks to Guillaume Chapuy, the European Research Council (ERC) under the European Union's Horizon 2020 research and innovation programme, grant agreement No. \texttt{ERC-2016-STG 716083} ``CombiTop''.
}}
\begin{document}

\maketitle

\begin{abstract}
We study a configuration model on bipartite planar maps in which, given $n$ even integers, one samples a planar map with $n$ faces uniformly at random with these face degrees. We prove that when suitably rescaled, such maps always admit nontrivial subsequential limits as $n \to \infty$ in the Gromov--Hausdorff--Prokhorov topology. Further, we show that they converge in distribution towards the celebrated Brownian sphere, and more generally a Brownian disk for maps with a boundary, if and only if there is no inner face with a macroscopic degree, or, if the perimeter is too big, the maps degenerate and converge to the Brownian tree. 
By first sampling the degrees at random with an appropriate distribution, this model recovers that of size-conditioned Boltzmann maps associated with critical weights in the domain of attraction of a stable law with index $\alpha\in [1,2]$. The Brownian tree and disks then appear respectively in the case $\alpha=1$ and $\alpha=2$, whereas in the case $\alpha \in (1,2)$ our results partially recover previous known ones.
Our proofs rely on known bijections with labelled plane trees, which are similarly sampled uniformly at random given $n$ outdegrees. Along the way, we obtain some results on the geometry of such trees, such as a convergence to the Brownian tree but only in the weaker sense of subtrees spanned by random vertices, which are of independent interest.
\end{abstract}

\begin{figure}[!ht] \centering
\includegraphics[height=17\baselineskip]{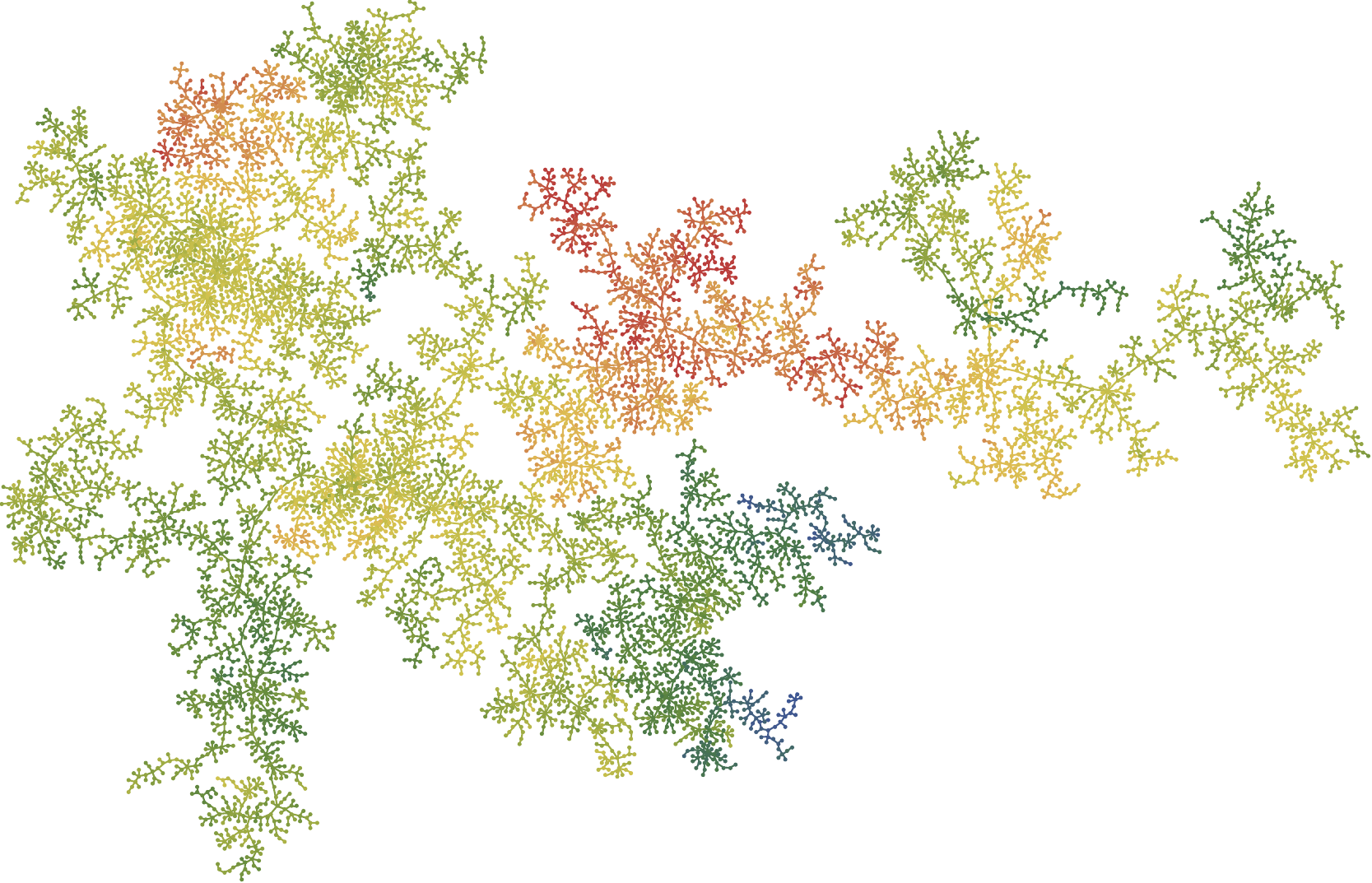}
\caption{A Brownian CRT with Brownian labels describing the Brownian map: labels are indicated by colours (red for the highest values and blue for the lowest).}
\label{fig:serpent_brownien}
\end{figure}

\newpage

\section{Introduction}

This paper deals with `continuum' limits of random planar maps as their size tends to infinity and their edge-length tends to zero appropriately. 
The most notable result in this theory has been obtained by Le~Gall~\cite{LG13} and Miermont~\cite{Mie13} who proved the convergence of large random quadrangulations towards a limit called the \emph{Brownian map} or the \emph{Brownian sphere}. 
Building on these works, the Brownian sphere has then been shown to be a universal limit of many models of discrete maps and this paper continues with a large class of distributions
introduced and studied previously in~\cite{Mar18b} in a restricted case.

\subsection{Model and main results}

A rooted planar map is a finite connected multigraph embedded in the two-dimensional sphere, in which one oriented root edge is distinguished, and viewed up to orientation-preserving homeomorphisms. 
The embedding allows to define the faces of the map, which are the connected components of the complement of the graph in the sphere; the degree of a face is then the number of edges incident to it, counted with multiplicity.
We shall consider maps with a, non-necessarily simple, boundary, given by the face incident to the right of the root edge; the degree of this face is called the perimeter of the map.
As usual in this field, for technical reasons briefly discussed later in this section, we restrict ourselves to \emph{bipartite} maps, in which all faces have even degree. 

Random maps provide discrete models of random geometry on the sphere and an important question is their convergence towards continuum random geometries when their size tends to infinity and one rescales the edge-lengths properly.
In this respect, in the case of uniformly random quadrangulations with $n$ faces, i.e. $n$ faces each with degree $4$, Le~Gall~\cite{LG07} proved that such graphs admit subsequential limits at the scaling $n^{1/4}$, and that these limits all have the same topology (later identified as the sphere~\cite{LGP08, Mie08}) and the same Hausdorff dimension $4$.
The problem of uniqueness of the subsequential limits was solved simultaneously by Le~Gall~\cite{LG13} and Miermont~\cite{Mie13} who proved that rescaled random quadrangulations converge in distribution towards a limit called the Brownian map, or sphere, which is a compact metric measured space $\Bmap^0 = (\Bmap^0,\dBmap^0,\pBmap^0)$ thus almost surely homeomorphic to the sphere and with Hausdorff dimension $4$. In the case of quadrangulations with a boundary, Bettinelli \& Miermont~\cite{BM17} proved that when the perimeter behaves like $\varrho n^{1/2}$ with $\varrho \in (0, \infty)$ fixed, they converge to the \emph{Brownian disk} with perimeter $\varrho$, denoted by $\Bmap^\varrho = (\Bmap^\varrho,\dBmap^\varrho, \pBmap^\varrho)$, which now has the topology of a disk, with Hausdorff dimension $4$, and its boundary has Hausdorff dimension $2$~\cite{Bet15}.

These works raise then the question of the universality of the limits which should not depend too much on the details of the discrete models. The present work proposes an answer by considering a more general model that we now introduce. 
For every integer $n \ge 1$, let us give ourselves an integer $\varrho_n \ge 1$ and a sequence $(d_n(k))_{k \ge 1}$ of nonnegative integers such that $\sum_{k \ge 1} d_n(k) = n$ and $d_n(1) < n$ to avoid trivialities; then let $\Mn$ denote the set of all (rooted planar bipartite) maps with perimeter $2 \varrho_n$ and $n$ inner faces, amongst which exactly $d_n(k)$ have degree $2k$ for every $k \ge 1$.
A key quantity in this work is
\[\sigma_n^2 \coloneqq \sum_{k \ge 1} k (k-1) d_n(k).\]
We stress that $\sigma_n^2$ really depends on the sequence $d_{n}$, not only on $n$, but we chose this lighter notation.
We sample $\mn$ uniformly at random in $\Mn$ and consider the asymptotic behaviour as $n \to \infty$ of its vertex set $V(\Mn)$ endowed with the graph distance $\dgr$ and the uniform probability measure $\pgr$.

\begin{thm}
\label{thm:tension_cartes}
Fix any sequence of perimeters $(\varrho_n)_{n \ge 1}$ and any degree sequence $(d_n)_{n \ge 1}$. From every increasing sequence of integers, one can extract a subsequence along which the sequence of metric measured spaces
\[\left(\Vmn, (\sigma_n + \varrho_n)^{-1/2} \dgr, \pgr\right)_{n \ge 1}\]
converges in distribution in the Gromov--Hausdorff--Prokhorov topology to a limit with a nonzero diameter.
\end{thm}

Our first main theorem thus identifies the correct scale of the random maps and provides a general tightness result; on the other hand, without any assumptions, it cannot give any further information on the subsequential limits. In our second main theorem, we prove that the Brownian sphere and disks appear when there is no macroscopic (face-)degree, in the sense that none of them dominates the others. We let
\[\Delta_n = \max\{k \ge 1 : d_n(k) \ne 0\}\]
be the largest half-degree of an inner face in $\Mn$, which we assume is always larger than or equal to $2$ in order to avoid trivialities.

\begin{thm}\label{thm:convergence_carte_disque}
Assume that 
$\lim_{n \to \infty} \sigma_n^{-1} \varrho_n = \varrho$ for some $\varrho \in [0,\infty)$. 
Then we have the convergence in distribution in the Gromov--Hausdorff--Prokhorov topology
\[\left(V(\mn), \left(\frac{3}{2 \sigma_n}\right)^{1/2} \dgr, \pgr\right) \cvloi (\Bmap^\varrho,\dBmap^\varrho, \pBmap^\varrho)\]
if and only if
\[\lim_{n \to \infty} \sigma_n^{-1} \Delta_n = 0.\]
\end{thm}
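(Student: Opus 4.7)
The plan is to follow the standard strategy for scaling limits of random bipartite planar maps due to Le~Gall and Miermont, adapted to the case of a boundary by Bettinelli and Miermont. The first step is to encode $\mn$ through the Bouttier--Di~Francesco--Guitter bijection, which turns a rooted bipartite map with boundary-length $2 \varrho_n$ into a forest of $\varrho_n$ labelled mobiles whose black vertices stand in bijection with the inner faces of $\mn$, a face of degree $2k$ corresponding to a black vertex with $k$ white offspring. After applying the Janson--Stefánsson transformation one obtains a single labelled plane forest $\LFn$ whose contour function $\Cfn$ and label function $\Lfn$ encode, respectively, the tree structure and (up to a global shift) the graph distances in $\mn$ to a distinguished vertex. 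The problem then reduces to showing the convergence in the Skorokhod sense of the pair $(\Cfn, \Lfn)$, suitably rescaled, towards the canonical Brownian pair coding the Brownian disk of perimeter $\varrho$ (or the Brownian map when $\varrho = 0$).

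Under the hypotheses $\sigma_n \to \infty$, $\varrho_n/\sigma_n \to \varrho$ and $\Delta_n/\sigma_n \to 0$, the claim takes the form
\[
\left(\frac{1}{\sigma_n}\, \Cfn\bigl(2(n+\varrho_n) t\bigr),\; \left(\frac{9}{8\sigma_n}\right)^{1/4} \Lfn\bigl(2(n+\varrho_n) t\bigr)\right)_{t \in [0,1]} \cvloi (H^\varrho, Z^\varrho),
\]
where $H^\varrho$ is the height process of a first-passage Brownian bridge from $0$ to $-\varrho$ of unit duration (the normalised Brownian excursion when $\varrho = 0$) and, conditionally on $H^\varrho$, the process $Z^\varrho$ is the Brownian label process driven by $H^\varrho$ with the boundary-inherited initial condition. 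The condition $\Delta_n/\sigma_n \to 0$ enters precisely as the Lindeberg condition ensuring that no single jump of the associated Łukasiewicz path is macroscopic, so that the classical invariance principles for conditioned random walks apply and yield $H^\varrho$; the label increments being conditionally centred with uniformly bounded variance, a functional central limit theorem then yields $Z^\varrho$ in the spirit of~\cite{Marckert-Miermont:Invariance_principles_for_random_bipartite_planar_maps}. Tightness of $\bigl(V(\mn), (3/(2\sigma_n))^{1/2}\dgr, \pgr\bigr)$ in the Gromov--Hausdorff--Prokhorov topology being the main result of the companion paper~\cite{Marzouk:On_the_growth_of_random_planar_maps_with_a_prescribed_degree_sequence}, it remains only to identify all subsequential limits with $(\Bmap^\varrho, \dBmap^\varrho, \pBmap^\varrho)$, which follows from the encoding convergence above combined with the characterisations of the Brownian map by Le~Gall~\cite{Le_Gall:Uniqueness_and_universality_of_the_Brownian_map} and of the Brownian disk by Bettinelli--Miermont~\cite{Bettinelli-Miermont:Compact_Brownian_surfaces_I_Brownian_disks}.

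For the reverse direction, I would argue by contradiction: assume that $\Delta_n/\sigma_n \not\to 0$, so that along a subsequence and with positive probability some inner face of $\mn$ has degree at least a positive fraction of $\sigma_n$. The vertex associated to this face in $\LFn$ then has a macroscopic out-degree, which produces a jump of positive size in the rescaled Łukasiewicz path and prevents the encoding pair from converging to the continuous pair used to construct the Brownian disk or map. To propagate this obstruction to the metric level, the plan is to exhibit a star-like configuration around this exceptional vertex, with several subtrees carrying asymptotic $\pgr$-mass bounded away from $0$, and to use the label-based lower bounds on distances already underlying the tightness argument to show that these give rise to distinct regions of positive mass in any subsequential Gromov--Hausdorff--Prokhorov limit, a configuration incompatible with the known geometry of the Brownian map and disk. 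I expect this last transfer step to be the main obstacle of the proof: turning a macroscopic-jump obstruction in the encoding picture into a genuine geometric discrepancy with the Brownian surfaces requires careful control of the distances in $\mn$ between vertices lying on the boundary of the exceptional face.
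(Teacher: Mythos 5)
Your forward direction follows the same general road map as the paper (BDG plus Janson--Stef\'ansson encoding, convergence of the encoding processes, tightness from the companion paper, identification of subsequential limits), but as written it contains errors and hides the two places where the actual work lies. First, the normalisations are wrong: in this model the height/contour of the forest lives at scale $\epsilon_n/\sigma_n$ where $\epsilon_n=\sum_k k\,d_n(k)$, so the correct factor is $\sigma_n/(2\epsilon_n)$, not $\sigma_n^{-1}$ (the two agree only when $\epsilon_n\asymp\sigma_n^2$, e.g.\ for $2q$-angulations), and the labels live at scale $\sigma_n^{1/2}$, so the factor must be $(3/(2\sigma_n))^{1/2}$, not $(9/(8\sigma_n))^{1/4}\asymp\sigma_n^{-1/4}$. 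Second, the claim that the label increments have ``uniformly bounded variance'' is false: across a vertex of out-degree $k$ the bridge increment has variance of order $k$, which can be as large as $\Delta_n$; the functional CLT for $\Lfn$ is precisely where $\Delta_n=o(\sigma_n)$ is needed a second time (the paper's Proposition~3 runs a Lyapunov-type argument along the spine, with degrees sampled without replacement from an urn, and the error term is of order $(\Delta_n/\sigma_n)^{1/2}$), so the hypothesis is not merely ``the Lindeberg condition for the {\L}ukasiewicz path''. Third, the identification of subsequential limits does not ``follow from the characterisations'': one needs the re-rooting argument, namely the almost sure bound $d_\infty\le D_{Z^\varrho}$ (hence $d_\infty\le\dBmap^\varrho$) together with the distributional identity $d_\infty(U,V)\eqloi Z^\varrho_V-\min Z^\varrho\eqloi\dBmap^\varrho(U,V)$, obtained by exchanging the distinguished vertex $\star$ with a uniform vertex and using Le~Gall's Corollary~7.3 and Bettinelli--Miermont's Lemma~17 and Theorem~20; without this mechanism the step is a gap. (Also note, as a matter of route, that the paper deliberately avoids proving convergence of the contour process: it only needs the reduced forests spanned by finitely many uniform points; your contour-based route would require a Broutin--Marckert/Lei-type theorem for forests with $\varrho_n\asymp\sigma_n$ trees.)

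The ``only if'' direction is where the genuine gap lies, and you acknowledge it yourself. Failure of convergence of the rescaled encoding processes does not imply failure of the Gromov--Hausdorff--Prokhorov convergence, since $\dgr$ is not a continuous functional of the encoding, and the obstruction you propose --- several subtrees of asymptotically positive mass attached to the high-degree vertex, hence ``distinct regions of positive mass'' in the limit --- contradicts nothing: such configurations are perfectly compatible with the Brownian map and the Brownian disk, which carry diffuse measures of full support. What is actually violated is the topology. The paper's argument works directly in the map: the labels read along the boundary of a face of degree $2\Delta_n\asymp\delta\sigma_n$ form a bridge of amplitude of order $\sigma_n^{1/2}$, and a surgery around this face produces, in any subsequential limit, either a pinch-point or a simple path with distant endpoints separating the space into two macroscopic parts; this is impossible in a space homeomorphic to the sphere (a simple arc cannot disconnect $S^2$), and a further analysis rules out the disk case as well, using the known topology of $\Bmap^\varrho$. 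Some argument of this topological kind (or another provable metric invariant of $\Bmap^\varrho$ that a macroscopic face destroys) is indispensable; as it stands, your proposal does not prove the ``only if'' half.
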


Theorem~\ref{thm:convergence_carte_disque} fully recovers~\cite[Theorem~1]{Mar18b} obtained under additional technical assumptions and in a restricted `finite variance' regime when $n^{-1} \sigma_n^2$ converges to a positive and finite limit.
An important point is that, since the characterisation of the limit spaces $\Bmap^\varrho$, showing convergence of discrete models to these objects has become much easier and we stress that, although the statement extends the results of~\cite{LG13, Mie13, BM17}, we rely on these works and we do not claim any independent proof of the convergence of quadrangulations.

The `only if' part of the statement is simple to prove: 
we shall see that an inner face with degree of order $\sigma_n$ has a diameter, in the whole map, of order $\sigma_n^{1/2}$, which either creates a pinch-point or a hole, so the space cannot converge in distribution towards a limit which has the topology of the sphere or the disk. This argument also shows that in the presence of such a macroscopic face, the diameter of the rescaled map is bounded away from zero, which proves that the subsequential limits in Theorem~\ref{thm:tension_cartes} are indeed not reduced to a single point (and without such a large face the maps converge to the Brownian sphere, which is nontrivial as well).

The behaviour drastically changes if $\varrho_n$ is much larger than $\sigma_n$; indeed as shown by Bettinelli~\cite[Theorem~5]{Bet15} for quadrangulations, in this case, the boundary takes over the rest of the map and we obtain in the limit $\CRT_{X^0} = (\CRT_{X^0}, \dCRT_{X^0}, \pCRT_{X^0})$ the Brownian Continuum Random Tree of Aldous~\cite{Ald93} encoded by the standard Brownian excursion $X^0$. 

\begin{thm}\label{thm:convergence_cartes_CRT}
Suppose that $\lim_{n \to \infty} \sigma_n^{-1} \varrho_n = \infty$. 
Then the convergence in distribution
\[\left(V(\mn), (2\varrho_n)^{-1/2} \dgr, \pgr\right) \cvloi (\CRT_{X^0}, \dCRT_{X^0}, \pCRT_{X^0})\]
holds in the Gromov--Hausdorff--Prokhorov topology, where $X^0$ is the standard Brownian excursion.
\end{thm}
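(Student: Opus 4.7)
The plan is to adapt the strategy of Bettinelli~\cite{Bettinelli:Scaling_limit_of_random_planar_quadrangulations_with_a_boundary} for quadrangulations with a large boundary. Via the Bouttier--Di~Francesco--Guitter bijection, the map $\mn$ pointed at a uniform vertex $v^\star$ corresponds to a labelled forest of $\varrho_n$ plane trees; the labels $\ell$, up to a deterministic shift, encode the graph distances to $v^\star$, and the labels along the boundary form (after centring) a random walk bridge of length $2\varrho_n$ whose step distribution is determined by the degree sequence.

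\textbf{Step 1: the boundary dominates.} When $\sigma_n/\varrho_n \to 0$, the only macroscopic structure is carried by the boundary itself. The boundary label process, rescaled by $(2\varrho_n)^{-1/2}$ in space and $2\varrho_n$ in time, converges in distribution to the standard Brownian excursion $X^0$ by Donsker's invariance principle applied to the random walk bridge. Simultaneously, the heights of each of the $\varrho_n$ trees of the forest are all $o(\sqrt{\varrho_n})$, so every inner vertex of $\mn$ lies at graph distance $o(\sqrt{\varrho_n})$ from its nearest boundary vertex, and its label differs from that boundary label by $o(\sqrt{\varrho_n})$ as well.

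\textbf{Step 2: distances and measure.} Combining the classical cactus upper bound $\dgr(u,v) \le \ell(u) + \ell(v) - 2 \min_{[u,v]} \ell + O(1)$ with the matching lower bound given by the BDG encoding (which for boundary vertices reduces to the tree-distance encoded by the boundary labels), and using Step~1, one concludes that after rescaling by $(2\varrho_n)^{-1/2}$ the distance on $\mn$ converges uniformly to the pseudo-distance $X^0(s) + X^0(t) - 2\min_{[s,t]} X^0$ on $[0,1]$, that is, to $\dCRT_{X^0}$. The uniform measure $\pgr$ then converges to $\pCRT_{X^0}$ once one verifies that the interior vertices are distributed regularly along the boundary, a consequence of the concentration of the tree sizes in the encoding forest.

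\textbf{Main obstacle.} The delicate point is obtaining \emph{uniform} control on the $\varrho_n$ tree heights and on the maximal label fluctuations inside each tree. Unlike in the case of quadrangulations, these trees are not i.i.d.\ but are jointly conditioned on the global degree sequence $d_n$, so one needs tail estimates that hold uniformly across the trees; moreover, the bound $\Delta_n \le \sqrt{2}\sigma_n$ coming from the definition of $\sigma_n$ must be used to control label jumps of magnitude up to $\Delta_n$ that may occur along a single tree edge. These estimates should nevertheless follow from the techniques developed in the companion paper~\cite{Marzouk:On_the_growth_of_random_planar_maps_with_a_prescribed_degree_sequence}, in particular from the tightness of the Łukasiewicz path and label processes at the coarser scale $(\sigma_n+\varrho_n)^{1/2}$.
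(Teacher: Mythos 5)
Your overall strategy is the paper's: encode the pointed map by a labelled forest with $\varrho_n$ trees, show that the root (boundary) labels carry all the macroscopic geometry while the labels inside each tree are $o(\varrho_n^{1/2})$, and squeeze the graph distance between the cactus-type upper and lower bounds. However, two steps as you state them do not work. First, the boundary label process is a random walk bridge (with the fixed step law $\P(S_1=i)=2^{-(i+2)}$, $i\ge -1$, of variance $2$ --- it does \emph{not} depend on the degree sequence), so Donsker gives convergence to the standard Brownian \emph{bridge} $\bb$, not to the excursion $X^0$. Correspondingly, the cactus bounds on a map with a cyclic boundary produce the pseudo-distance $D_{\bb}(s,t)=\bb_s+\bb_t-2\max\{\min_{[s,t]}\bb,\min_{[t,s]}\bb\}$ with a maximum of \emph{two} minima (one over each arc), not the single-minimum formula $X^0(s)+X^0(t)-2\min_{[s,t]}X^0$ you write. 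The identification of the resulting quotient space with $(\CRT_{X^0},\dCRT_{X^0})$ is a separate step, obtained by Vervaat's transform relating $\bb$ and $X^0$; the paper makes this isometry explicit before proving the distance convergence. As written, your Step~1 conflates these two objects and your Step~2 targets the wrong pseudo-metric.

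Second, the claim that the heights of the $\varrho_n$ trees are all $o(\sqrt{\varrho_n})$ is false in general and is in any case not the relevant quantity: tree heights are of order $\epsilon_n/\sigma_n$, which can vastly exceed $\sqrt{\varrho_n}$ (e.g.\ $\epsilon_n=n$, $\sigma_n\sim\sqrt n$, $\varrho_n=n^{3/4}$), and graph distances in the map are governed by labels, not by heights. What must be shown is that the label process $\tLfn$ \emph{relative to the root of each tree} is uniformly $o(\varrho_n^{1/2})$. The paper proves this by combining the tightness of $(\sigma_n+\varrho_n)^{-1/2}\Lfn$ from the companion paper with a second-moment bound at a uniform vertex, $\Esc{|\tLfn(i_n)|^2}{\fn}\le C\,\RR(x_n)$, where $\RR(x_n)$ (the number of vertices branching off to the right of the ancestral line, read off the {\L}ukasiewicz path) has mean $O(\sigma_n)=o(\varrho_n)$. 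This argument requires no control on $\Delta_n$ at all --- the bound $\Delta_n\le\sqrt2\,\sigma_n$ you invoke is true but plays no role --- and no per-tree height estimate. Once these two points are repaired, the rest of your outline (upper and lower cactus bounds converging to the same limit because non-root labels differ from their root's label by $o(\varrho_n^{1/2})$, and the measure convergence via the leaf reparametrisation) matches the paper's proof.
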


Theorems~\ref{thm:convergence_carte_disque} and~\ref{thm:convergence_cartes_CRT} apply directly to models of random maps when all faces have the same degree. For an integer $p \ge 2$, a \emph{$2p$-angulation} denotes a map in which all faces have degree $2p$. In this case $\Delta_n = p$ and $\sigma_n^2 = p(p-1)n$.
Then Le~Gall~\cite[Theorem~1]{LG13} actually proved that for any $p$ fixed, such a $2p$-angulation with $n$ faces sampled uniformly at random converges in distribution towards the Brownian sphere; this was extended to $2p$-angulations with a boundary by Bettinelli \& Miermont~\cite[Corollary~6]{BM17}. Our results allow to let $p$ vary with $n$.

\begin{cor}
\label{cor:convergence_d_ang}
Let $\varrho \in [0,\infty]$ and let $(p_n)_{n \ge 1} \in \{2, 3, \dots\}^\N$ and $(\varrho_n)_{n \ge 1} \in \N^\N$ be any sequences such that $\lim_{n \to \infty} (p_n (p_n-1) n)^{-1/2} \varrho_n = \varrho$. For every $n \ge 1$, let $M^{\varrho_n}_{n, p_n}$ be a uniformly chosen random $2p_n$-angulation with $n$ inner faces and with perimeter $2\varrho_n$.
\begin{enumerate}
\item Suppose that $\varrho < \infty$, then the convergence in distribution
\[\left(V(M^{\varrho_n}_{n, p_n}), \left(\frac{9}{4 p_n (p_n-1) n}\right)^{1/4} \dgr, \pgr\right) \cvloi (\Bmap^\varrho,\dBmap^\varrho, \pBmap^\varrho)\]
holds in the Gromov--Hausdorff--Prokhorov topology.

\item Suppose that $\varrho = \infty$, then the convergence in distribution
\[\left(V(M^{\varrho_n}_{n, p_n}), (2\varrho_n)^{-1/2} \dgr, \pgr\right) \cvloi (\CRT_{X^0}, \dCRT_{X^0}, \pCRT_{X^0})\]
holds in the Gromov--Hausdorff--Prokhorov topology.
\end{enumerate}
\end{cor}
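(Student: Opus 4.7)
The plan is simply to apply Theorems~\ref{thm:convergence_carte_disque} and~\ref{thm:convergence_cartes_CRT} to the deterministic degree sequence $d_n(q_n)=n$, $d_n(k)=0$ for every $k\ne q_n$, which encodes the $2q_n$-angulation model. For this sequence the two key quantities of the general setting take the explicit form $\sigma_n^2 = q_n(q_n-1)\, n$ and $\Delta_n = q_n$. Since $q_n \ge 2$, we always have $\sigma_n^2 \ge 2n$, so $\sigma_n \to \infty$, and the assumption $(q_n(q_n-1)n)^{-1/2}\varrho_n \to \varrho$ of the corollary is exactly the hypothesis $\sigma_n^{-1}\varrho_n \to \varrho$ of the two theorems.

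For part~(i), with $\varrho \in [0,\infty)$, I would verify the remaining hypothesis of Theorem~\ref{thm:convergence_carte_disque}, namely $\sigma_n^{-1}\Delta_n \to 0$. This follows from
\[
\left(\frac{\Delta_n}{\sigma_n}\right)^{\!2} \;=\; \frac{q_n}{(q_n-1)\, n} \;\le\; \frac{2}{n},
\]
using that $q_n/(q_n-1) \le 2$ for $q_n \ge 2$; crucially, this vanishes irrespective of how $q_n$ behaves with $n$. The scaling factor in Theorem~\ref{thm:convergence_carte_disque} then rewrites as
\[
\left(\frac{3}{2\sigma_n}\right)^{\!1/2} \;=\; \left(\frac{9}{4\, q_n(q_n-1)\, n}\right)^{\!1/4},
\]
which is exactly the normalisation stated in the corollary, so Theorem~\ref{thm:convergence_carte_disque} delivers the Gromov--Hausdorff--Prokhorov convergence towards $\Bmap^\varrho$.

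For part~(ii), with $\varrho = \infty$, the hypothesis $\sigma_n^{-1}\varrho_n \to \infty$ of Theorem~\ref{thm:convergence_cartes_CRT} is precisely the assumption of the corollary, and the factor $(2\varrho_n)^{-1/2}$ appears verbatim on both sides; the conclusion is thus a direct invocation, without any further computation.

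I foresee no genuine obstacle here: the whole content of the corollary lies in Theorems~\ref{thm:convergence_carte_disque} and~\ref{thm:convergence_cartes_CRT}, and the only substantive observation is that for a uniform-degree model the ratio $\Delta_n/\sigma_n$ vanishes automatically at rate $n^{-1/2}$, which makes the criterion of Theorem~\ref{thm:convergence_carte_disque} trivially satisfied.
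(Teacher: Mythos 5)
Your proposal is correct and is exactly the argument the paper intends (the corollary is stated as a direct application of Theorems~\ref{thm:convergence_carte_disque} and~\ref{thm:convergence_cartes_CRT}, with no separate proof given): the identifications $\sigma_n^2=q_n(q_n-1)n$, $\Delta_n=q_n$, the bound $\Delta_n/\sigma_n\le (2/n)^{1/2}$, and the rewriting of the scaling constant are all accurate. Nothing is missing.
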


An important point about our main theorems is that the assumptions are very simple to check. We shall illustrate this with so-called size-conditioned \emph{critical $\alpha$-stable Boltzmann maps} in Section~\ref{sec:BGW_Boltzmann}.
This model can be seen as a mixture of our present one, where ($n$ itself and) the sequence $d_n$ is first sampled at random, informally as a conditioned version of some i.i.d. random variables, and then one samples a map uniformly at random given these degrees. The size of a random Boltzmann map is not fixed but one can sample such a map conditioned to have either $n$ faces, or $n$ edges, or $n$ vertices and let $n \to \infty$.

When the index $\alpha$ lies in $(1,2)$, we shall prove that for some deterministic sequence $a_n$ of order $n^{1/\alpha}$ (up to a \emph{slowly varying function}), the ratio $a_n^{-1} \sigma_n$ converges in distribution as $n\to\infty$, so we deduce from Theorem~\ref{thm:tension_cartes} tightness of these maps once rescaled by $a_n^{-1/2} \approx n^{-1/(2\alpha)}$. This model was first studied by Le~Gall \& Miermont~\cite{LGM11} who obtained more information in addition to tightness, although proving the uniqueness of the subsequential limit is still open.

When $\alpha=2$, we shall see that $a_n^{-1} \Delta_n \to 0$ in probability and $a_n^{-1} \sigma_n$ converges in probability to an explicit constant which depends on the law of the degrees and the notion of size (faces, edges, or vertices) where $a_n$ is of order $n^{1/2}$ (again up to a slowly varying function).
Then Theorem~\ref{thm:convergence_carte_disque} implies the convergence of these maps, once rescaled by the factor $a_n^{-1/2} \approx n^{-1/4}$, to a Brownian disk.
In the case of maps without boundary, this recovers~\cite[Theorem~1]{Mar18a}, whereas for maps with a boundary, this extends~\cite[Theorem~5]{BM17} which assumes small exponential moments.

Finally, in the other extreme case $\alpha=1$, relying on the recent work by Kortchemski \& Richier~\cite{KR19}, we prove that there is a unique giant face, of order some other deterministic sequence $|b_n| \approx n$ (still up to a slowly varying function), and this falls into the framework of Theorem~\ref{thm:convergence_cartes_CRT} so the maps converge, once rescaled by a factor $|b_n|^{-1/2} \approx n^{-1/2}$ to the Brownian CRT. A similar result holds for \emph{subcritical maps}, at the scaling exactly $n^{-1/2}$, as first shown by Janson \& Stef\'{a}nsson \cite{JS15}.
We refer to Section~\ref{sec:BGW_Boltzmann} for precise statements.

\subsection{Random trees with a prescribed degree sequence}
\label{sec:intro_arbres}

This model of random maps was inspired by a similar model of random trees, whose scaling limits were first studied by Broutin \& Marckert~\cite{BM14} and extended to forests recently by Lei~\cite{Lei19}.
Let $(\varrho_n)_{n \ge 1}$ and $(d_n)_{n \ge 1}$ be as above, then for each $n \ge 1$ we let $\Tree_{d_n}^{\varrho_n}$ be the set of ordered plane forests with $\varrho_n$ trees and $n$ internal vertices, amongst which $d_n(k)$ have $k$ offspring for every $k \ge 1$. Such a forest always has
\[\edges_n \coloneqq \sum_{k \ge 1} k d_n(k),
\qquad
d_n(0) \coloneqq \varrho_n + \sum_{k \ge 1} (k-1) d_n(k),
\qquad
\vertices_n \coloneqq \sum_{k \ge 0} d_n(k),\]
edges, leaves, and vertices respectively.

Under technical assumptions on the degree sequences $(d_n)_{n\ge 1}$, in particular that $\sigma_n^2 \sim \sigma^2 E_n$ for some $\sigma \in (0,\infty)$ and $\lim_{n \to \infty} \sigma_n^{-1} \Delta_n = 0$, Broutin \& Marckert~\cite{BM14} showed that if $\tn$ is a tree sampled uniformly at random in $\Tree_{d_n}^1$, then the convergence in distribution
\begin{equation}\label{eq:convergence_arbres_BM}
\left(V(\tn), \frac{\sigma_n}{2 \edges_n} \dgr, \pgr\right) \cvloi (\CRT_{X^0}, \dCRT_{X^0}, \pCRT_{X^0})
\end{equation}
holds in the Gromov--Hausdorff--Prokhorov topology, where $X^0$ is the standard Brownian excursion. 
More precisely, the \emph{contour} or \emph{height} process of the rescaled tree converges in distribution towards $X^{0}$.

Under the assumption of no macroscopic degree only, we prove a weaker convergence, in the sense of subtrees spanned by finitely many random vertices, as depicted on Figure~\ref{fig:arbre_reduit}.
Fix $q \ge 1$ and let $u_1, \dots, u_q$ be $q$ i.i.d. uniform random vertices of $\tn$ and keep only these vertices and their ancestors and remove all the other ones; further merge each chain of vertices with only one child in this new tree into a single edge with a length given by the number of edges of the former chain. The resulting tree $\mathscr{R}_{d_n}(q)$ is called a discrete tree with edge-lengths; its combinatorial structure is that of a plane tree with at most $q$ leaves and no vertex with outdegree $1$, so there are only finitely many possibilities, and thus there are a bounded number of edge-lengths to record. We equip the space of trees with edge-lengths with the natural product topology. For $x_1, \dots, x_q$ i.i.d. random points of $\CRT_{X^0}$ sampled from the mass measure $\pCRT_{X^0}$, one can construct similarly a discrete tree with edge-lengths $\mathscr{R}_{X^0}(q)$; its law is described by Aldous~\cite[Section~4.3]{Ald93}.

\begin{figure}[!ht] \centering
\includegraphics[height=10\baselineskip]{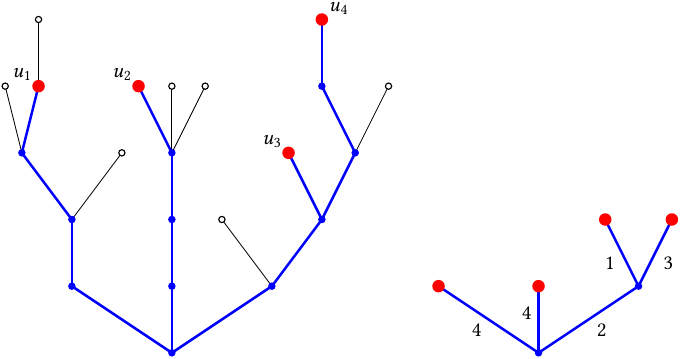}
\caption{Left: a plane tree and four distinguished vertices in red. Right: the associated reduced tree with edge-lengths.}
\label{fig:arbre_reduit}
\end{figure}

\begin{thm}\label{thm:convergence_arbre_reduit}
If $\lim_{n \to \infty} \sigma_n^{-1} \Delta_n = 0$, then for every $q \ge 1$ we have the convergence in distribution
\[\frac{\sigma_n}{2 \edges_n} \mathscr{R}_{d_n}(q) \cvloi \mathscr{R}_{X^0}(q).\]
where $X^0$ is the standard Brownian excursion.
\end{thm}

This result holds more generally when $\varrho_n \sim \varrho \sigma_n$ for some $\varrho \in [0,\infty)$, for the forest $\fn$, viewed as a single tree by attaching all the roots to an extra root vertex. The limit is a different continuum tree which involves a Brownian first-passage bridge from $0$ to $-\varrho$; we refer to Theorem~\ref{thm:marginales_hauteur} for a statement involving the height process of the forest.

Theorem~\ref{thm:convergence_arbre_reduit} characterises the possible scaling limits of the trees $\tn$ in this regime with no macroscopic degrees; in order to extend the convergence from~\eqref{eq:convergence_arbres_BM} and thus obtain an analogue of Theorem~\ref{thm:convergence_carte_disque} for trees and forests, it remains to prove tightness of this sequence (see precisely Equation~25 in~\cite{Ald93}). Here no general tightness result as in Theorem~\ref{thm:tension_cartes} holds and the maximal height of the tree can be much larger than $\edges_n / \sigma_n$. 
Indeed, similarly to Boltzmann maps, size-conditioned Bienaymé--Galton--Watson trees can be thought of as mixtures of this model where $d_n$ is itself random; then when the offspring distribution is subcritical, Kortchemski~\cite{Kor15} proved that the height of the tree grows logarithmically, whereas $\edges_n / \sigma_n$ is of constant order, and the rescaled trees are not tight.
Still, this regime, where the trees exhibit a so-called condensation phenomenon, with a unique huge degree, is extreme and we wonder wether beside such a case tightness holds; in particular, does~\eqref{eq:convergence_arbres_BM} hold as soon as $\lim_{n \to \infty} \sigma_n^{-1} \Delta_n = 0$?

There is a particular case that our method allows to treat, and which seems already new, which is that of regular trees: for $n,k\ge1$, a (strict) \emph{$k$-ary tree} with size $n$ is a tree made of $n$ vertices with $k$ offspring each, and $n(k-1) + 1$ leaves, and no other vertex. With the preceding notation, a $k$-ary tree $T_{n,k}$ with size $n$ sampled uniformly at random has the law of $\tn$ where $d_{n}(k) = n$ and $d_{n}(i) = 0$ for all $i\in \N\setminus\{k\}$, so $\edges_n = k n$ and $\sigma_{n}^{2} = k(k-1)n$.
For such trees, one can adapt our results to prove the analogue of Corollary~\ref{cor:convergence_d_ang}, that is: for any sequence $(k_{n})_{n\ge1} \in \{2, 3, \dots\}^\N$, the convergence in distribution
\begin{equation}\label{eq:arbres_k_aires}
\bigg(V(T_{n,k_{n}}), \sqrt{\frac{k_{n}-1}{4 k_{n} n}} \dgr, \pgr\bigg) \cvloi (\CRT_{X^0}, \dCRT_{X^0}, \pCRT_{X^0})\end{equation}
holds in the Gromov--Hausdorff--Prokhorov topology. Actually, the contour or height process of the rescaled tree converges in distribution towards the standard Brownian excursion $X^{0}$. See Remark~\ref{rem:k_ary} for a further discussion.

\subsection{Strategy of the proof and further discussion}

The study of the random forests $\fn$ is also an important part of the proof of our main results on maps. Indeed, the combination of the bijections from~\cite{BDG04, JS15} relates maps in $\Mn$ with forests in $\Tree_{d_n}^{\varrho_n}$ where each vertex carries an integer label.
Thanks to the work of Le~Gall~\cite{LG13} and Miermont~\cite{Mie13} our main theorems shall easily follow from the study of such random labelled forests and most of this paper is devoted to this.
Let us mention at this point that the bijection from~\cite{BDG04} holds also for maps which are not bipartite and it was used by Le~Gall~\cite{LG13} to prove that random triangulations converge to the Brownian sphere. Recently, Addario-Berry \& Albenque~\cite{ABA21} used it to prove that random $p$-angulations converge to the Brownian sphere for every $p \ge 5$ odd. 
However the labelled trees from~\cite{BDG04} are more complicated to control and extending our present results to the non-bipartite case does not seem technically easy.

The graph distance on the map can be partially encoded by the \emph{label process} which reads the labels of the forest in depth-first search order, and we prove that this process is always tight when suitably rescaled, see Theorem~\ref{thm:tension_etiquettes}. 
This tightness relies only on the so-called \emph{{\L}ukasiewicz path} of the forest, which is a very simple process: up to a discrete Vervaat transform (also known as cyclic shift), the latter is a uniformly random path which makes $d_n(k)$ jumps of size $k-1$ for each $k \ge 0$. This is the simplest example of a random path with exchangeable increments and its asymptotic behaviour is well understood.
From a technical point of view, tightness of the label process requires a precise control of the {\L}ukasiewicz path obtained in particular via Chernoff bounds for martingales. 
Going from tightness of the label process to tightness of the map as in Theorem~\ref{thm:tension_cartes} is then very standard in the theory.

Similarly, Theorems~\ref{thm:convergence_carte_disque} and~\ref{thm:convergence_cartes_CRT} follow easily from the convergence of the label process stated in Theorem~\ref{thm:convergence_etiquettes}. After tightness is proved, it only remains to consider its finite-dimensional marginals. A first step is to prove Theorem~\ref{thm:marginales_hauteur} on the convergence of the forest without the labels, in the weak sense of the subforests spanned by finitely many random vertices; for this we introduce a new \emph{spinal decomposition} which describes the genealogy of such random vertices. This shall allow us to compare the so-called height process and {\L}ukasiewicz path of the forest. 
We want to stress that only this weak convergence of the forest is needed, and not a strong convergence, such as the functional convergence of the contour process used in the previous works on random maps.
Once we control the length of the branches of the subforests, the joint convergence of the label of these random vertices follows by a Central Limit Theorem for independent but not identically distributed random variables; here again we strongly rely on the spinal decomposition.

Finally the last section is devoted to stable Boltzmann maps; in this case the {\L}ukasiewicz path of the associated forest is a conditioned first-passage bridge of a left-continuous random walk. We rely on previous works and familiar techniques in the study of stable Bienaymé--Galton--Watson trees to prove that the random degree distribution fits in our general framework.

This work leaves open several questions on maps (in addition to those on trees and on the non-bipartite case discussed above). Let us only briefly mention that of the asymptotic behaviour of (planar bipartite) maps in the case of large degrees, of order $\sigma_n$, that we are currently investigating. Under suitable assumptions, the {\L}ukasiewicz path converges towards the excursion of a process with exchangeable increments which makes no negative jump. Aldous, Miermont, \& Pitman~\cite{AMP04} constructed the analogue of the height process of the associated `Inhomogeneous Continuum Random Tree' which is a family of random excursions with continuous-path which extends the Brownian excursion. One can then try to define `Inhomogeneous Continuum Random Maps'  by adding random labels on such trees in a similar way as in~\cite{LGM11} and to prove convergence of the discrete maps towards these objects (after extraction of a subsequence). In this regard our tightness results from Theorems~\ref{thm:tension_cartes} and~\ref{thm:tension_etiquettes} represent a first step towards such a convergence.

\subsection{Organisation of the paper}
In Section~\ref{sec:preliminaires}, we first recall the definition of labelled plane forests and their encoding by paths and we briefly discuss the bijection with planar maps; then we state 
the three results whose proof will occupy most of this paper: first Theorem~\ref{thm:convergence_arbre_reduit} on the convergence of reduced forest in Section~\ref{sec:arbres_etiquetes}, and then Theorems~\ref{thm:tension_etiquettes} and~\ref{thm:convergence_etiquettes} on the label process in Section~\ref{sec:enonce_tension_convergence_etiquettes}.
In Section~\ref{sec:convergence_cartes} we prove the main theorems from the introduction by relying on the aforementioned results. Section~\ref{sec:forets} is devoted to the study of the random forests, we prove in particular Theorem~\ref{thm:convergence_arbre_reduit}there. We study the label process in Section~\ref{sec:etiquettes} where we prove Theorems~\ref{thm:tension_etiquettes} and~\ref{thm:convergence_etiquettes}.
Finally in Section~\ref{sec:BGW_Boltzmann} we describe the model of stable Boltzmann planar maps, we state and prove our results on these models by relating them to our general setup.

\subsection*{Acknowledgement}

I am grateful to Igor Kortchemski who spotted an error in a first draft, as well as to all the referees involved for their constructive remarks.

\section{Planar maps as labelled trees}
\label{sec:preliminaires}

As alluded in the introduction, we study planar maps via a bijection with labelled forests. Let us first recall formally the definition of the latter and set the notation we shall need. Then we state the main contributions of this paper on scaling limits of the paths which code these forests; the proofs are differed to Sections~\ref{sec:forets} and~\ref{sec:etiquettes}.

\subsection{The key bijection}
\label{sec:bijection}

A (rooted plane) tree is a planar map with a unique face. We shall interpret it as the genealogical tree of a population. First, the origin of the root edge is the ancestor of the family, denoted by $\varnothing$, then for any given vertex $x$, its neighbour $pr(x)$ closer to the ancestor is its parent, and all the other neighbours are its offspring; the distance of $x$ to the root is its generation and is denoted by $|x|$. A vertex with no offspring is called a leaf, and the other ones are called internal vertices. Note that due to the embedding on the sphere, the offspring of a given internal vertex are ordered from left to right and we denote them by $x1, \dots, xk_x$ where $k_x$ is the offspring number of $x$; finally the root edge points to the left-most offspring of the ancestor. 
For any vertex $x \ne \varnothing$, we let $\chi_x \in \{1, \dots, k_{pr(x)}\}$ be the only index such that $x=pr(x) \chi_x$, which is the relative position of $x$ amongst its siblings. We shall denote by $\llbracket x , y \rrbracket$ the unique geodesic path between $x$ and $y$. Finally, we shall always list the vertices of a tree as $\varnothing = x_0 < x_1 < \dots$ in the so-called \emph{depth-first search} order, which is constructed recursively as follows: suppose that $x_0 < x_1 < \dots < x_i$ are constructed for some $i\ge 0$, then let $j \le i$ denote the largest index such that $x_j$ has an offspring which does not belong to the list $(x_0, \dots, x_i)$ and let $x_{i+1}$ denote the left-most of these missing offspring.

A (plane) forest is a finite ordered list of plane trees; we shall alternatively view a forest as a single tree by attaching all the roots to an extra root vertex (when focusing on its geometry), or as another connected graph by linking two consecutive roots in a chain (when focusing on labels as defined below).

For every $k \ge 1$, let us consider the following set of discrete bridges:
\begin{equation}\label{eq:def_pont}
\mathscr{B}_k^{\ge -1} = \left\{(b_1, \dots, b_k): b_1, b_2-b_1, \dots, b_k-b_{k-1} \in \Z_{\ge-1},\, b_k=0\right\}.
\end{equation}
Then a labelling of a plane forest, with say, $\varrho \ge 1$ trees rooted at the vertices $r_1 < \dots < r_\varrho$ respectively, is a function $\ell$ from its vertices to $\Z$ such that:
\begin{enumerate}
\item the sequence $(\ell(r_1), \ell(r_2), \dots, \ell(r_\varrho))$ belongs to $\mathscr{B}_{\varrho}^{\ge -1}$,
\item for every vertex $x$ with $k_x \ge 1$ offspring, the sequence of increments $(\ell(x1)-\ell(x), \dots, \ell(xk_x)-\ell(x))$ belongs to $\mathscr{B}_{k_x}^{\ge -1}$.
\end{enumerate}
We emphasise the asymmetry of the model: we have $\ell(x1) \ge \ell(x)-1$ in general, but $\ell(xk_x)=\ell(x)$. Also, in the case of a single tree, the first condition reduces to the fact that its root has label $0$.

Recall from the introduction that for every integer $n$, we consider an integer $\varrho_n \in \N$ and a sequence $d_n = (d_n(k))_{k \ge 1} \in \Z_+^{\N}$ which sums up to $n$, and we denote by $\Tree_{d_n}^{\varrho_n}$ the set of all plane forests with $\varrho_n$ trees and $n$ internal vertices, amongst which $d_n(k)$ have $k$ offspring for every $k \ge 1$. Such a forest always has
\[\edges_n \coloneqq \sum_{k \ge 1} k d_n(k),
\qquad
d_n(0) \coloneqq \varrho_n + \sum_{k \ge 1} (k-1) d_n(k),
\qquad
\vertices_n \coloneqq \sum_{k \ge 0} d_n(k),\]
edges, leaves, and vertices respectively.
We let $\LFn$ denote the set of forests in $\Fn$ equipped with a labelling as above. For a single tree, we shall drop the exponent $1$.

Let $\PMn$ be the set of \emph{pointed maps} $(M_n, x_\star)$ where $M_n$ is a map in $\Mn$ and $x_\star$ is a distinguished vertex of $M_n$. If $(M_n, x_\star)$ is a pointed map, then the tip of the root edge is either farther (by one) to $x_\star$ than its origin, in which case the map is said to be \emph{positive} by Marckert \& Miermont~\cite{MM07}, or it is closer (again by one), in which case the map is said to be \emph{negative}.
Let us immediately note that every map in $\Mn$ has $\varrho_n + \sum_{k \ge 1} k d_n(k) = \vertices_n$ edges in total and so $\vertices_n - n + 1 = d_n(0) + 1$ vertices by Euler's formula. 
In particular a uniformly random map in $\Mn$ in which we further distinguish a vertex $x_\star$ independently and uniformly at random has the uniform distribution in $\PMn$. Moreover, half of the $2 \varrho_n$ edges on the boundary are `positively oriented' and half of them are `negatively oriented', so if $\Mn$ is positive, we may re-root it to get a negative map. Therefore it is equivalent to work with random negative maps in $\PMn$ instead of maps in $\Mn$.

Combining the bijections due to Bouttier, Di Francesco, \& Guitter~\cite{BDG04} and to Janson \& Stef\'{a}nsson~\cite{JS15}, we obtain that the set $\LFn$ is in one-to-one correspondence with the set of negative maps in $\PMn$. Let us refer to these papers as well as to~\cite{Mar18a} for a direct construction of the bijection. In a few words the map is constructed from the forest in two steps: first as in the standard Schaeffer bijection and more generally as in~\cite{BDG04} we link every vertex of the forest with the next one in \emph{depth-first search order} (as opposed to the contour order on corners for the other bijections) with a smaller label, or to an extra vertex for those whose label is minimal; then we merge every internal vertex of the forest with its last offspring (recall that $\ell(xk_x)=\ell(x)$); see Figure~\ref{fig:bijection_arbre_carte} for an illustration.
The bijection enjoys the following properties:
\begin{enumerate}
\item The leaves of the forest are in one-to-one correspondence with the vertices different from the distinguished one in the map, and the label of a leaf minus the infimum over all labels, plus one, equals the graph distance between the corresponding vertex of the map and the distinguished vertex.
\item The internal vertices of the forest are in one-to-one correspondence with the inner faces of the map, and the outdegree of the vertex is half the degree of the face.
\item The root face of the map corresponds to the collection of roots of the forest, and the number of trees is half the perimeter of the map.
\end{enumerate}
The third property only holds for negative maps, which is the reason why we restricted ourselves to this case.

\begin{figure}[!ht] \centering
\includegraphics[width=.475\linewidth, page = 2]{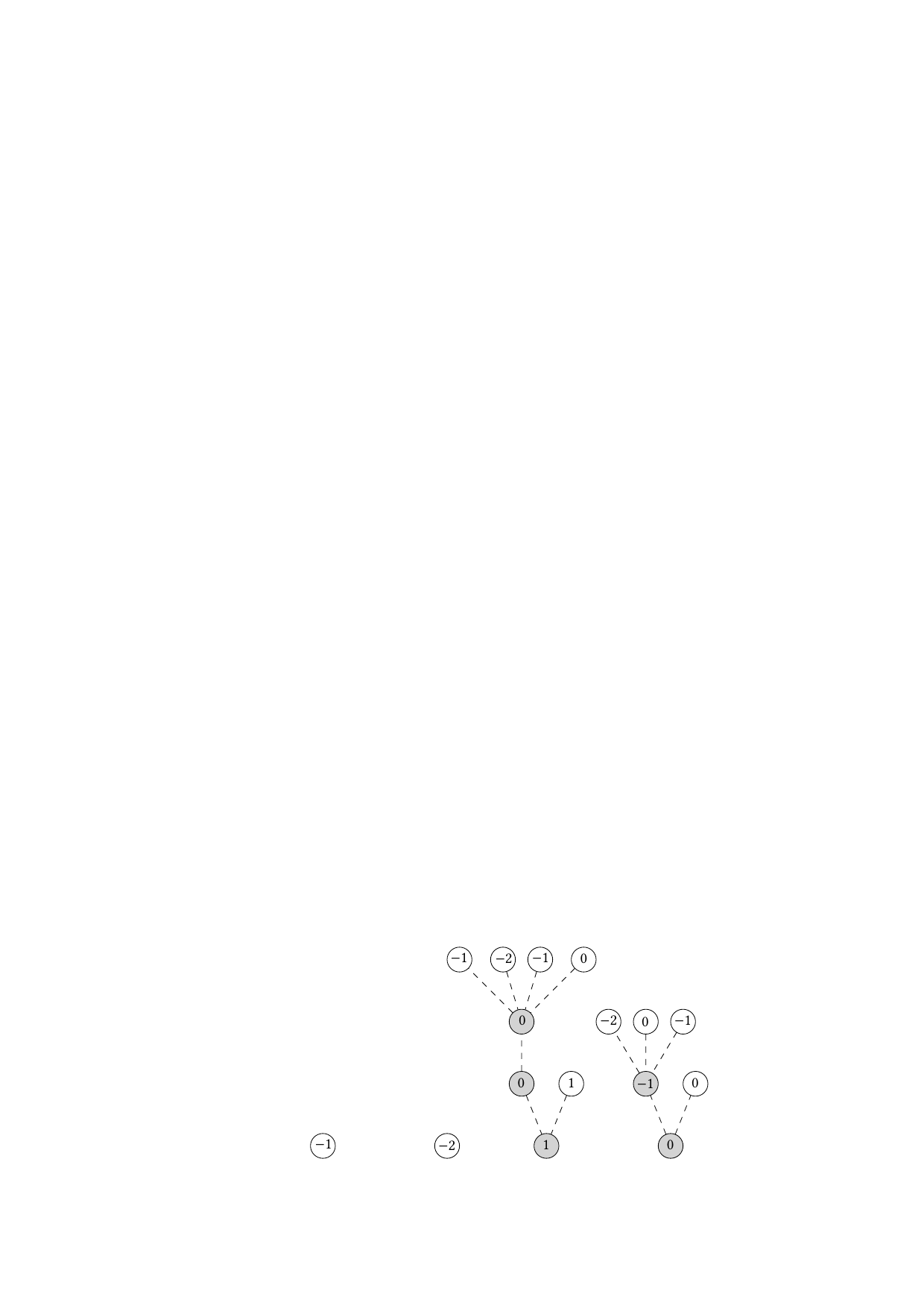}
\quad
\includegraphics[width=.475\linewidth, page = 3]{Bijection_carte_arbre}
\caption{Left: a labelled forest in solid lines with an extra vertex labelled $-3$ and in dashed the links between each vertex and the next one with smaller label (in cyclic order).
Right: the negative pointed map obtained by removing the edges of the forest and merging each internal vertex (in gray) with its last offspring.}
\label{fig:bijection_arbre_carte}
\end{figure}

\subsection{Labelled forests and discrete paths}
\label{sec:def_arbres_marche}

Fix a forest $F$ with $\varrho \ge 1$ trees and $V \ge 1$ vertices in total, listed $x_0 < x_1 < \dots < x_{V-1}$ in lexicographical order. It is well known that $F$ is described by each of the following two discrete paths. First, its \emph{{\L}ukasiewicz path} $W = (W(j) ; 0 \le j \le V)$ is defined by $W(0) = 0$ and
\[W(j+1) = W(j) + k_{x_j}-1
\qquad(0 \le j \le V-1).\]
One easily checks that $W(V) = -\varrho$ and $W(j) > -\varrho$ for every $0 \le j \le V-1$. 
Next the \emph{height process} $H = (H(j); 0 \le j \le V)$ is defined by setting $H(V)=0$ for convenience and
\[H(j) = |x_j| \qquad(0 \le j \le V-1);\]
see Figure~\ref{fig:foret_etiquetee} for an illustration.
Clearly the maps $F \mapsto W$ and $F \mapsto H$ are injective, and in order to recover the forest from one of those paths, one may first observe that the infimum of $W$ up to time $j$, or the number of passages of $H$ by $0$ up to time $j$ tells to which tree of the forest the vertex $x_j$ belongs, and then the parent of this vertex is $x_k$, where $k = \sup\{i < j : H(i) < H(j)\}$ for the height process, and $k = \sup\{i < j : W(i) \le W(j)\}$ for the {\L}ukasiewicz path. 
Let us refer to e.g. Le~Gall~\cite{LG05} for a thorough discussion of such encodings.

In the case of a labelled forest, we encode the labels into the \emph{label process} given by $L(V)=0$ for convenience and
\[L(j) = \ell(x_j) \qquad(0 \le j \le V-1);\]
see Figure~\ref{fig:foret_etiquetee}.
The labelled forest is encoded both by the pair $(H, L)$ and by the pair $(W,L)$.

\begin{figure}[!ht]
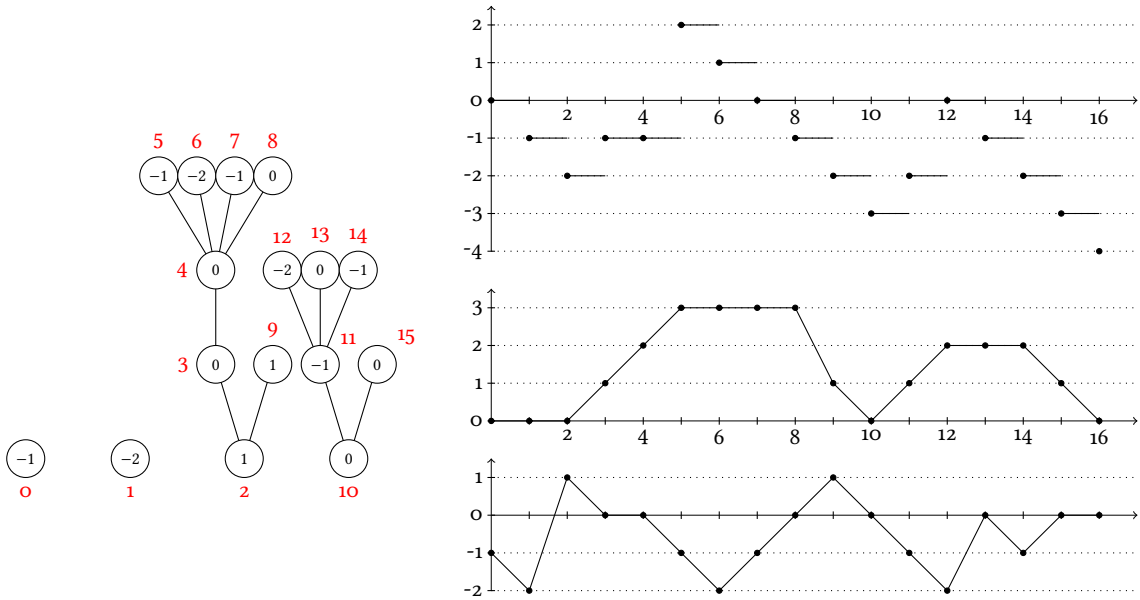
 \centering
\begin{minipage}{.425\linewidth}
\includegraphics[width=\linewidth, page = 8]{Bijection_carte_arbre}
\end{minipage}
\hfill
\begin{minipage}[c]{.55\linewidth}
\includegraphics[width=\linewidth, page = 9]{Bijection_carte_arbre}
\end{minipage}
\caption{Left: A labelled forest on the left, with labels indicated on the nodes and next to them in red their lexicographical order.
Right, from top to bottom: its {\L}ukasiewicz path, its height process, and its label process.}
\label{fig:foret_etiquetee}
\end{figure}

Without further notice, throughout this work, every {\L}ukasiewicz path shall be viewed as a step function, jumping at integer times, whereas the height and label processes shall be viewed as continuous functions after interpolating linearly between integer times.

Let us end with another useful construction of the {\L}ukasiewicz path.
The next lemma, whose proof is left as an exercise, gathers some deterministic results that we shall need. In order to simplify the notation, we shall often identify the vertices of a tree with their index in the lexicographic order: here if $x$ is the $i$'th vertex of a tree whose {\L}ukasiewicz path is $W$, then we write $W(x)$ for $W(i)$.

\begin{lem}\label{lem:codage_marche_Luka}
Let $T$ be a plane tree and $W$ be its {\L}ukasiewicz path. Fix a vertex $x \in T$, then
\[W(x k_x) = W(x),
\qquad
W(xj') = \inf_{[xj, xj']} W,
\qquad\text{and}\qquad
j' - j = W(xj) - W(xj')\]
for every $1 \le j \le j' \le k_x$.
\end{lem}

For a vertex $x$ of a tree $T$, let us denote by $\LL(x)$ and $\RR(x)$ respectively the number of vertices $y$ whose parent is a strict ancestor of $x$ and which lie strictly to the left, respectively to the right, of the ancestral line $\llbracket \varnothing, x\llbracket$; then we put $\LR(x) = \LL(x) + \RR(x)$. Then a consequence of the preceding lemma is that, in a tree, we have $\RR(x) = W(x)$. In the case of a forest, we define $\LL(x)$ and $\RR(x)$ (and so $\LR(x)$) as the same quantities in the tree containing $x$, and then we have more generally
\begin{equation}\label{eq:def_X_discret}
\RR(x) = W(x) - \min_{0 \le y \le x} W(y).
\end{equation}
For example, if $x$ is the vertex number 7 in Figure~\ref{fig:foret_etiquetee}, then $\RR(x) = W(7) - \min_{0 \le i \le 7} W(i) = 2$, which corresponds to the vertices 8 and 9.

\subsection{Geometry of random forests}
\label{sec:arbres_etiquetes}

The first key step to understand the asymptotic behaviour of a random map in $\PMn$ is to study the associated random labelled forest $(\fn, \ell)$ in $\LFn$. Let us first discuss the behaviour of the forest, and more precisely of its \L ukasiewicz path and height process, before stating our main results on the labels. For every integer $n$, we are thus given a degree sequence $d_n = (d_n(k))_{k \ge 0} \in \Z_+^{\Z_+}$ and $\fn$ is sampled uniformly at random in the set $\Fn$ of forests which possess $\varrho_n = \sum_{k \ge 0} (1-k) d_n(k)$ trees and $\vertices_n = \sum_{k \ge 0} d_n(k)$ vertices, amongst which $d_n(k)$ have $k$ children for every $k \ge 0$.
Recall also the notation
$\edges_n \coloneqq \sum_{k \ge 1} k d_n(k) = \vertices_n - \varrho_n$ for the number of edges, $\Delta_n \coloneqq \max\{k \ge 0 : d_n(k) > 0\}$ the largest offspring, and finally
$\sigma_n^2 \coloneqq \sum_{k \ge 0} k (k-1) d_n(k)$,
so $\sigma_n^2/\vertices_n$ is the second factorial moment of a vertex chosen uniformly at random.

It is well known that the {\L}ukasiewicz path $\Wfn$ of our random forest $\fn$ can be constructed as a cyclic shift of a bridge, also called the discrete Vervaat transform, see e.g. Pitman~\cite[Chapter~6]{Pit06}. More precisely, let $\Bfn = (\Bfn(i))_{0 \le i \le \vertices_n}$ be a discrete path sampled uniformly at random amongst all those started from $\Bfn(0) = 0$ and which make exactly $d_n(k)$ jumps with value $k-1$ for every $k \ge 0$, so $\Bfn$ is a bridge from $0$ to $\Bfn(\vertices_n) = \sum_{k \ge 0} (k - 1) d_n(k) = - \varrho_n$. Independently, sample $p_n$ uniformly at random in $\{0, \dots, \varrho_n-1\}$ and set 
\[i_n = \inf\left\{i \in\{1, \dots, \vertices_n\} : \Bfn(i) = p_n + \inf_{1 \le j \le \vertices_n} \Bfn(j)\right\}.\]
Then $\Wfn$ has the law of 
the concatenation
\[\left((\Bfn(i_n+k) - \Bfn(i_n))_{0 \le k \le \vertices_n-i_n}, (\Bfn(k) + \Bfn(\vertices_n) - \Bfn(i_n))_{1 \le k \le i_n}\right).\]
Moreover, the time $i_n$ has the uniform distribution on $\{1, \dots, \vertices_n\}$ and is independent of the cyclically shifted path $\Wfn$. Given the excursion $\Wfn$ and $i_n$, one recovers the bridge $\Bfn$ by the same cyclic shift.

The next result states that vertices with a given outdegree are homogeneously spread in the forest in the sense that, asymptotically, the number of vertices with degree in a given set of integers, when read in depth-first search order, grows linearly. We shall apply it with $A = \{-1\}$ in order to deal with maps.
For a subset $A \subset \Z_{\ge -1} = \{-1, 0, 1, 2, \dots\}$ and a real number $r \in [0, \vertices_n]$, let $\Lambda_A(r; \Wfn)$ denote the number of jumps with value in $A$ amongst the first $\lfloor r\rfloor$ jumps of $\Wfn$, and consider its inverse $\zeta_A(t; \Wfn) = \inf\{r : \Lambda_A(r; \Wfn) = \lfloor t\rfloor\}$ for all $t \in [0, \infty)$ the time needed to see at least $\lfloor t\rfloor$ jumps with value in $A$. Let us set $d_n(A + 1) = \sum_{k \in A} d_n(k+1)$.

\begin{lem}\label{lem:proportion_feuilles}
Fix any subset $A \subset \Z_{\ge -1}$, if $d_n(A+1) \to \infty$, then we have the convergence in probability
\[\left(\frac{\Lambda_A(\vertices_n t; \Wfn)}{d_n(A+1)}, \frac{\zeta_A(d_n(A+1) t; \Wfn)}{\vertices_n}\right)_{t \in [0,1]} \cvproba (t, t)_{t \in [0,1]}.\]
\end{lem}

\begin{proof}
Note first that since the two functions are inverse of one another and non-decreasing, the two convergences are equivalent. Second, by constructing the bridge $\Bfn$ by cyclically shifting $\Wfn$ at an independent uniformly random time, it is sufficient to prove the claim when the {\L}ukasiewicz path $\Wfn$ is replaced by the bridge $\Bfn$. 

For an integer $j \le V_n$, the quantity $\Lambda_A(j; \Bfn)$ is the sum of \emph{dependent} Bernoulli random variables: we start with an urn containing $\vertices_n$ balls in total, amongst which $d_n(A+1)$ are labelled $A$, we sample $j$ balls without replacement and count the number of balls labelled $A$ picked. If those picks were with replacement, then the first convergence would simply follow from the law of large numbers; actually when sampling without replacement, $\Lambda_A(j; \Bfn)$ is even more concentrated around its mean $j d_n(A+1)/\vertices_n$ and e.g. the Chernoff bound applies in the same way it does with i.i.d. Bernoulli random variables (this is already shown in Hoeffding’s seminal paper~\cite{Hoe63}); we deduce the convergence of $t \mapsto \Lambda_A(\vertices_n t; \Bfn) / d_n(A+1)$ to the identity and thus also that of $t \mapsto \zeta_A(d_n(A+1) t; \Bfn)/\vertices_n$.
\end{proof}

Let us next focus on the convergence of $\Wfn$. For $\varrho \in [0,\infty)$, let us denote by $B^\varrho = (B^\varrho_t)_{t \in [0,1]}$ the standard Brownian bridge from $0$ to $-\varrho$ with duration $1$. Analogously one can construct $X^\varrho$ the \emph{first-passage Brownian bridge} from $0$ to $-\varrho$ (which reduces to the standard Brownian excursion when $\varrho = 0$) by cyclically shifting $B^\varrho$, see \cite[Theorem~7]{BCP03}.
The starting point of this work is the following result, which extends \cite[Lemma~7]{BM14} and \cite[Theorem~1.6]{Lei19}.

\begin{prop}\label{prop:convergence_Luka}
Assume that $\lim_{n \to \infty} \sigma_n = \infty$ and $\lim_{n \to \infty} \sigma_n^{-1} \varrho_n = \varrho$ for some $\varrho \in [0, \infty]$.
\begin{enumerate}
\item Suppose that $\varrho < \infty$. Then from every sequence of integers, one can extract a subsequence along which the sequence $\sigma_n^{-1} \Bfn(\lfloor \vertices_n \cdot \rfloor)$ converges in distribution in the Skorokhod's $J_1$ topology.

\item Furthermore this sequence converges in distribution towards $B^\varrho$ if and only if $\lim_{n \to \infty} \sigma_n^{-1} \Delta_n = 0$. In this case, the sequence of processes $\sigma_n^{-1} \Wfn(\lfloor \vertices_n \cdot \rfloor)$ converges in distribution towards $X^\varrho$.

\item Suppose that $\varrho = \infty$, then both processes $\varrho_n^{-1} \Bfn(\lfloor \vertices_n \cdot \rfloor)$ and $\varrho_n^{-1} \Wfn(\lfloor \vertices_n \cdot \rfloor)$ converge in probability towards $t \mapsto -t$.
\end{enumerate}
\end{prop}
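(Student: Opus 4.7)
The bridge $\Bfn$ is by construction a walk with exchangeable increments: conditionally on its multiset of jumps (which is deterministic, namely $d_n(k)$ copies of $k-1$ for each $k\ge 0$, with $d_n(0)$ adjusted to produce the terminal value $-\varrho_n$), it is a uniformly random ordering. The whole proof will rest on Kallenberg's invariance principle for exchangeable processes on $[0,1]$.

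For (i), I would start with a direct sampling-without-replacement computation, which yields
\[
\Var\!\bigl(\Bfn(i)\bigr) \;=\; \frac{i(\upsilon_n-i)}{\upsilon_n(\upsilon_n-1)}\Bigl(\sigma_n^2+\varrho_n-\tfrac{\varrho_n^2}{\upsilon_n}\Bigr),
\]
using the algebraic identity $\sum_{k\ge 0}(k-1)^2 d_n(k)=\sigma_n^2+\varrho_n$ (obtained by writing $(k-1)^2=k(k-1)-(k-1)$). In the regime $\varrho_n=O(\sigma_n)$ this equals $t(1-t)\sigma_n^2(1+o(1))$ at $i=\lfloor\upsilon_n t\rfloor$, and a standard exchangeable-sum maximal inequality verifies Aldous' tightness criterion in the $J_1$ topology. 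By Kallenberg's theorem, any subsequential limit of $\sigma_n^{-1}\Bfn(\lfloor\upsilon_n\cdot\rfloor)$ admits the representation
\[
Z_t \;=\; -\varrho t + \alpha\, B_t + \sum_{i\ge 1}\beta_i\bigl(\mathbf{1}_{U_i\le t}-t\bigr),
\]
where $B$ is a standard Brownian bridge, the $U_i$ are i.i.d.~uniform on $[0,1]$ independent of $B$, and $\alpha\ge 0$, $(\beta_i)_{i\ge 1}$ are deterministic parameters with $\alpha^2+\sum_i\beta_i^2 = 1$, forced by the variance computation above.

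For (ii), the $\beta_i$ are precisely the accumulation points of the decreasing rearrangement of $\{\sigma_n^{-1}(k-1): d_n(k)\ \text{copies}\}_{k\ge 1}$. The hypothesis $\Delta_n/\sigma_n\to 0$ kills them all, hence $\alpha=1$ and $Z \eqloi B^\varrho$. Conversely, if $\limsup\Delta_n/\sigma_n>0$, a subsequence produces a persistent macroscopic jump and the limit cannot be Brownian. To transfer the convergence from $\Bfn$ to $\Wfn$ I would invoke the discrete Vervaat construction recalled in the excerpt, together with the continuity of the continuous Vervaat transform at càdlàg bridges attaining their infimum at a unique time (a property enjoyed by $B^\varrho$ almost surely); the randomizer $p_n$ is $O(\varrho_n)=O(\sigma_n)$ and therefore negligible in the limit. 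A Skorokhod representation argument in the spirit of \cite{Bertoin-Chaumont-Pitman:Path_transformations_of_first_passage_bridges} then yields $\sigma_n^{-1}\Wfn(\lfloor\upsilon_n\cdot\rfloor)\cvloi F^\varrho$.

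For (iii), the drift dominates: $\E[\varrho_n^{-1}\Bfn(\lfloor\upsilon_n t\rfloor)] = -t + o(1)$ uniformly, while $\Var(\varrho_n^{-1}\Bfn(\lfloor\upsilon_n t\rfloor)) = O(\sigma_n^2/\varrho_n^2) = o(1)$, so Chebyshev gives pointwise convergence in probability to $-t$; uniformity follows from the Lipschitz character of the mean and an oscillation bound at the coarser scale $\varrho_n$. For the second process, the argmin index $i_n$ satisfies $i_n/\upsilon_n\to 1$ in probability (since the fluid limit $t\mapsto -t$ attains its minimum uniquely at $1$), so the cyclic shift becomes asymptotically trivial and $\varrho_n^{-1}\Wfn(\lfloor\upsilon_n\cdot\rfloor)\to -t$ as well. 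The main obstacle is the converse implication in (ii): one must argue that a surviving macroscopic jump in the discrete multiset is genuinely recorded by a nonzero $\beta_i$ of the Kallenberg representation, rather than being absorbed into a larger diffusive component. This is handled by extracting a subsequence along which the decreasing rearrangement of the normalized jumps converges term by term, and exploiting the uniqueness of the parameters $(\alpha,(\beta_i))$ in Kallenberg's decomposition.
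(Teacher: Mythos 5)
Your proof follows essentially the same route as the paper: tightness via Aldous' exchangeable-increments criterion, identification of subsequential limits via Kallenberg's representation, and transfer from $\Bfn$ to $\Wfn$ via continuity of the discrete/continuous Vervaat transform at unique-minimum bridges. The paper cites Aldous' Proposition 20.3 (``normalised urn'') for tightness and Kallenberg's Theorem 14.21 for the decomposition into Brownian bridge plus a nonnegative jump part; you write out the representation $-\varrho t + \alpha B_t + \sum_i \beta_i(\mathbf{1}_{U_i\le t}-t)$ more explicitly, which is fine. For (iii) the paper gives a cleaner argument: since the sequence $(X_n)$ is tight (no extra assumption on $\Delta_n$), multiplying by $\sigma_n/\varrho_n\to 0$ immediately yields convergence in probability to the null process, without separate Chebyshev/oscillation estimates.

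There is, however, a genuine error in your treatment of $\Wfn$ in part (iii). You claim that the shifting index $i_n$ satisfies $i_n/\upsilon_n\to 1$ in probability ``since the fluid limit $t\mapsto -t$ attains its minimum uniquely at $1$.'' This is wrong: by the construction recalled in the paper, $i_n$ is \emph{not} the argmin of $\Bfn$ but the first hitting time of $p_n+\inf_j \Bfn(j)$, where $p_n$ is uniform on $\{0,\dots,\varrho_n-1\}$, and one can check (the paper points this out) that $i_n$ is uniform on $\{1,\dots,\upsilon_n\}$. The cyclic shift is therefore \emph{not} asymptotically trivial. Your conclusion $\varrho_n^{-1}\Wfn(\lfloor\upsilon_n\cdot\rfloor)\to -t$ nevertheless holds, but for a different reason: the cyclic shift (at any point) of the linear bridge $t\mapsto -t$ is again $t\mapsto -t$, so continuity of the shift operation suffices regardless of where the shift occurs. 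You should replace the argmin argument with this observation, or simply invoke continuity of the Vervaat map as in part (ii), as the paper does.
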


From~\eqref{eq:def_X_discret}, by continuity, when $\lim_{n \to \infty} \sigma_n^{-1} \Delta_n = 0$ and $\varrho < \infty$, the process $(\sigma_n^{-1} \RR(\lfloor \vertices_n t\rfloor))_{t \in [0,1]}$ converges in distribution towards $\tX = (X^\varrho_t - \inf_{s \in [0,t]} X^\varrho_s)_{t \in [0,1]}$.

\begin{proof}
Let us first start with the case $\varrho < \infty$.
For every $1 \le i \le \vertices_n$, let $b_i = \Bfn(i) - \Bfn(i-1)$.
Then $\sum_{i = 1}^{\vertices_n} b_i = \sum_{k \ge 0} (k - 1) d_n(k) = -\varrho_n$ and
\[\sum_{i = 1}^{\vertices_n} b_i^2
= \sum_{k \ge 0} (k - 1)^2 d_n(k)
= \sum_{k \ge 0} k (k - 1) d_n(k) - \sum_{k \ge 0} (k - 1) d_n(k)
= \sigma_n^2 + \varrho_n.\]
Therefore $\sum_{i = 1}^{\vertices_n} (b_i + \vertices_n^{-1} \varrho_n) = 0$ and 
$\sum_{i = 1}^{\vertices_n} (b_i + \vertices_n^{-1} \varrho_n)^2 
= \sigma_n^2 + \varrho_n - \vertices_n^{-1} \varrho_n^2 = \sigma_n^2 (1 + o(1))$.
Then the sequence given by
\[x_i = \frac{b_i + \vertices_n^{-1} \varrho_n}{(\sigma_n^2 + \varrho_n - \vertices_n^{-1} \varrho_n^2)^{1/2}},
\qquad 1 \le i \le \vertices_n,\]
is called a `normalised urn' by Aldous~\cite[Chapter~20]{Ald85}. For every $t \in [0,1]$, let us define
\[S_n(t) = \sum_{i \le \lfloor \vertices_n t \rfloor} x_i = 
\frac{\Bfn(\lfloor \vertices_n t \rfloor)}{\sigma_n (1 + o(1))} + \frac{\varrho_n \lfloor \vertices_n t \rfloor}{\vertices_n \sigma_n (1 + o(1))}.\]
Then by~\cite[Proposition~20.3]{Ald85}, the sequence $(S_n)_{n \ge 1}$ is always tight in the Skorokhod's $J_1$ topology, this implies the first claim. Furthermore, by~\cite[Theorem~20.7]{Ald85}, this sequence converges in distribution towards a Brownian bridge $B^0$ as soon as there is no macroscopic jump, i.e. $\lim_{n \to \infty} \sigma_n^{-1} \Delta_n = 0$. Of course if there is such a large jump, a limit cannot have continuous paths. In the absence of jumps, we deduce that $\sigma_n^{-1} \Bfn(\lfloor \vertices_n \cdot \rfloor)$ converges in distribution towards $(-\varrho t + B^0_t)_{t \in [0,1]}$ which has the same law as $B^\varrho$. Since $\Wfn$ and $X^\varrho$ are obtained by cyclically shifting $\Bfn$ and $B^\varrho$ respectively, this implies further the convergence in distribution of $(\sigma_n^{-1} \Wfn(\lfloor \vertices_n t \rfloor))_{t \in [0,1]}$ towards $X^\varrho$ from the continuity of this operation, see Lei~\cite[Section~4]{Lei19}.

Let us finally suppose that $\varrho_n \gg \sigma_n$. Then similarly, for every $t \in [0,1]$, let us set
\[S'_n(t) = \frac{\sigma_n}{\varrho_n} S_n(t)
= \frac{\Bfn(\lfloor \vertices_n t \rfloor)}{\varrho_n (1 + o(1))} + \frac{\varrho_n \lfloor \vertices_n t \rfloor}{\vertices_n \varrho_n (1 + o(1))}.\]
Since the sequence $(S_n)_{n \ge 1}$ is tight, then $S'_n$ converges in probability to the null process and therefore $\varrho_n^{-1} \Bfn(\lfloor \vertices_n \cdot \rfloor)$ converges in probability towards $t \mapsto -t$. The convergence of $\varrho_n^{-1} \Wfn(\lfloor \vertices_n \cdot \rfloor)$ follows again by cyclic shift.
\end{proof}

Let us next turn to the height process $\Hfn$ associated with the random forest $\fn$.
Let us fix a continuous function $g : [0,1] \to \R$, then for every $0 \le s \le t \le 1$, set
\[\dCRT_g(s,t) = \dCRT_g(t,s) = g(s) + g(t) - 2 \min_{r \in [s,t]} g(r).\]
One easily checks that $\dCRT_g$ is a continuous pseudo-metric on $[0,1]$. Consider the quotient space $\CRT_g = [0,1] / \{\dCRT_g = 0\}$; we let $\pi_g$ be the canonical projection $[0,1] \to \CRT_g$, then $\dCRT_g$ induces a metric on $\CRT_g$ that we still denote by $\dCRT_g$, and the Lebesgue measure on $[0,1]$ induces a measure $\pCRT_g$ on $\CRT_g$. The space $\CRT_g = (\CRT_g, \dCRT_g, \pCRT_g)$ is a so-called compact measured real-tree, naturally rooted at $\pi_g(0)$.
When $g = X^0$, the space $\CRT_{X^0}$ is the celebrated Brownian Continuum Random Tree of Aldous~\cite{Ald93}. More generally, for $\varrho \in (0,\infty)$, 
the space $\CRT_{X^\varrho}$ describes a forest of Brownian trees, attached by their root along an interval of length $\varrho$ as in~\cite{BM17}.
For every $t \in [0,1]$, set $\underlinep{X}^\varrho_t = \min_{0 \le s \le t} X^\varrho_s$ and then $\tX_t = X^\varrho_t-\underlinep{X}^\varrho_t$; then the continuum tree $\CRT_{\tX}$ describes the same forest of Brownian trees, but now after all the roots have been merged together. 

Similarly, in the discrete setting, let us view a plane forest as a single tree by attaching all the roots to an extra root vertex. The next proposition shows that, under the assumption of no macroscopic degree, the forest $\fn$ spanned by i.i.d. uniform random vertices, i.e. the smallest connected subset of $\fn$ viewed as a tree containing these vertices,
converges towards the analogue for $\CRT_{\tX}$ and i.d.d. points sampled from its mass measure $\pCRT_{\tX}$. This result implies Theorem~\ref{thm:convergence_arbre_reduit} from the introduction.

\begin{thm}\label{thm:marginales_hauteur}
Assume that $\lim_{n \to \infty} \sigma_n^{-1} \varrho_n = \varrho$ for some $\varrho \in [0,\infty)$ and that $\lim_{n \to \infty} \sigma_n^{-1} \Delta_n = 0$. 
Fix $q \ge 1$ and let $U_1, \dots, U_q$ be i.i.d. uniform random variables in $[0,1]$ independent of the rest and denote by $0 = U_{(0)} < U_{(1)} < \dots < U_{(q)}$ their ordered statistics. Then the convergence in distribution
\[\frac{\sigma_n}{2 \edges_n} \left(\Hfn(\vertices_n U_{(i)}), \inf_{U_{(i-1)} \le t \le U_{(i)}} \Hfn(\vertices_n t)\right)_{1 \le i \le q}
\cvloi
\left(\tX_{U_{(i)}}, \inf_{U_{(i-1)} \le t \le U_{(i)}} \tX_t\right)_{1 \le i \le q}\]
holds jointly with that of $\sigma_n^{-1} \Wfn(\vertices_n \cdot)$ towards $X^\varrho$ in Proposition~\ref{prop:convergence_Luka}.
\end{thm}

The proof is more involved and is differed to Section~\ref{sec:marginales_hauteur}. We already mentioned that the question of tightness of the process $\Hfn$ (or even its maximum, as discussed in the introduction), once suitably rescaled, was not easy; at least we can say that it is simply not true in general, 
but we wonder wether it is the case under the assumptions of Theorem~\ref{thm:marginales_hauteur}.

\subsection{Random labelled forests}
\label{sec:enonce_tension_convergence_etiquettes}

Let us next consider random labelled forests $(\fn, \ell)$ in $\LFn$. The law of the latter can be constructed in the following way: first $\fn$ is sampled uniformly at random in $\Fn$ as previously, and then given this forest, we sample the label of the roots uniformly at random in $\mathscr{B}_{\varrho_n}^{\ge -1}$ and, independently for every branchpoint with, say, $k \ge 1$ offspring, the label increments between these offspring and the branchpoint are sampled uniformly at random in $\mathscr{B}_k^{\ge -1}$.
Let $\Lfn$ be the associated label process.

\begin{thm}
\label{thm:tension_etiquettes}
Fix any sequence $(\varrho_n)_{n \ge 1}$ and any sequence of degree sequence $(d_n)_{n \ge 1}$.
Then from every increasing sequence of integers, one can extract a subsequence along which the label processes
\[\left((\sigma_n + \varrho_n)^{-1/2} \Lfn(\vertices_n t)\right)_{t \in [0,1]}\]
converge for the uniform topology.
\end{thm}

In order to deduce Theorems~\ref{thm:convergence_carte_disque} and~\ref{thm:convergence_cartes_CRT} we need to identify these subsequential limits.
We shall need to deal with the root vertices of $\fn$ separately; let us therefore denote by $(\bfn(k))_{1 \le k \le \varrho_n}$ the labels of the roots of $\fn$, and set $\bfn(0) = \bfn(\varrho_n) = 0$. Recall that $\Wfn$ is the {\L}ukasiewicz path of $\fn$, for every $0 \le k \le \vertices_n$, let us set $\infWfn(k) = \min_{0 \le i \le k} \Wfn(i)$ and write
\begin{equation}\label{eq:decomposition_labels}
\Lfn(k) = \tLfn(k) + \bfn(1-\infWfn(k)).
\end{equation}
Then $\bfn(1-\infWfn(k))$ gives the value of the label of the root vertex of the tree containing the $k$'th vertex, so $\tLfn(k)$ gives the value of the label of the $k$'th vertex minus the value of the label of the root vertex of its tree; in other words, $\tLfn$ is the concatenation of the label process of each tree taken individually, so where all labels have been shifted so that each root receives label $0$. The point is that, conditionally given $\fn$, the two processes $\tLfn(\cdot)$ and $\bfn(1-\infWfn(\cdot))$ are independent.

The Brownian sphere and disks are described similarly by `continuum labelled trees' which we next recall, following Bettinelli \& Miermont~\cite{BM17}. Fix $\varrho \in [0, \infty)$ and recall that $X^\varrho$ denotes the standard Brownian first passage bridge from $0$ to $-\varrho$, and that for every $t \in [0,1]$, we set $\underlinep{X}^\varrho_t = \min_{0 \le s \le t} X^\varrho_s$ and then $\tX_t = X^\varrho_t-\underlinep{X}^\varrho_t$; the processes $X^\varrho$ and $\tX$ each encode in a different way a Brownian forest as explained previously.
For every $y \in [0,\varrho]$, let us set $\tau_y = \inf\{t \in [0,1] : X^\varrho_t = -y\}$.
We construct next another process $Z^\varrho = (Z^\varrho_t)_{t \in [0,1]}$ on the same probability space as $X^\varrho$. First, conditionally on $X^\varrho$, let $\tildep{Z}^\varrho$ be a centred Gaussian process with covariance 
\[\Esc{\tildep{Z}^\varrho_s \tildep{Z}^\varrho_t}{X^\varrho} = \min_{r \in [s,t]} \tX_r
\qquad\text{for every}\qquad 0 \le s \le t \le 1.\]
It is classical that $\tildep{Z}^\varrho$ admits a continuous version and, without further notice, we shall work throughout this paper with this version. 
In the case $\varrho = 0$, we simply set $Z^0 = \tildep{Z}^0$. If $\varrho > 0$, independently of $\tildep{Z}^\varrho$, let $\bb^\varrho$ be a standard Brownian bridge from $0$ to $0$ with duration $\varrho$, which has the law of $(\varrho^{1/2} \bb(\varrho t))_{t \in [0,1]}$ where $\bb$ is a standard Brownian bridge on the time interval $[0,1]$, and set
\[Z^\varrho_t = \tildep{Z}^\varrho_t + \sqrt{3} \cdot \bb^\varrho_{- \underlinep{X}^\varrho_t}
\qquad\text{for every}\qquad 0 \le t \le 1.\]
This construction is the continuum analogue of the decomposition of the process $\Lfn$ in~\eqref{eq:decomposition_labels}.
Observe that, almost surely, $Z^\varrho_s = Z^\varrho_t$ whenever $\dCRT_{X^\varrho}(s, t) = 0$ so $Z^\varrho$ can be seen as a process indexed by $\CRT_{X^\varrho}$ by setting $Z^\varrho_x = Z^\varrho_t$ if $x = \pi_{X^\varrho}(t)$. We interpret $Z^\varrho_{x}$ as the label of an element $x \in \CRT_{X^\varrho}$; the pair $(\CRT_{X^\varrho}, (Z^\varrho_x; x \in \CRT_{X^\varrho}))$ is a continuum analogue of labelled plane forests.

For $\varrho = 0$, the process $Z^0$ is interpreted as a Brownian motion on the Brownian tree $\CRT_{X^0}$ started from $0$ on the root.
For $\varrho > 0$, the space $\CRT_{X^\varrho}$ is interpreted as a collection of Brownian trees glued at their root along the interval $[0,\varrho]$; for every $y \in [0,\varrho]$, it holds that $- \underlinep{X}^\varrho_{\tau_y} = y$ so each point $\pi_{X^\varrho}(\tau_y)$ `at position $y$' on this interval receives label $\sqrt{3} \, \bb_{y}$ and then the labels on each of the trees $\pi_{X^\varrho}((\tau_{y-}, \tau_y])$ evolve like independent Brownian motions on them. The Brownian sphere and disks are constructed from these processes, see Section~\ref{sec:carte_brownienne} below. The second main result on the label process is the following.

\begin{thm}
\label{thm:convergence_etiquettes}
Suppose that 
$\lim_{n \to \infty} \sigma_n^{-1} \varrho_n = \varrho$ for some $\varrho \in [0,\infty]$.
\begin{enumerate}
\item\label{thm:convergence_etiquettes_arbre} If $\varrho = \infty$, then the convergence in distribution
\[\left((2\varrho_n)^{-1/2} \Lfn(\vertices_n t); t \in [0,1]\right) \cvloi (\bb_t; t \in [0,1])\]
holds in $\mathscr{C}([0,1],\R)$.

\item\label{thm:convergence_etiquettes_carte_disque} 
If $\varrho < \infty$ and furthermore $\lim_{n \to \infty} \sigma_n^{-1} \Delta_n = 0$, then the convergence in distribution
\[\left(\left(\frac{3}{2\sigma_n}\right)^{1/2} \Lfn(\vertices_n t) ; t \in [0,1]\right) \cvloi (Z^\varrho_t; t \in [0,1])\]
holds in $\mathscr{C}([0,1],\R)$.
\end{enumerate}
\end{thm}

Let us point out that in Theorem~\ref{thm:convergence_etiquettes}~\eqref{thm:convergence_etiquettes_arbre}, when $\lim_{n \to \infty} \sigma_n^{-1} \varrho_n = \infty$, we have more precisely:
\[\left((2\varrho_n)^{-1/2} \left(\bfn(\varrho_n t), \tLfn(\vertices_n t)\right); t \in [0,1]\right) \cvloi ((\bb_t, 0); t \in [0,1]),\]
which, combined with Proposition~\ref{prop:convergence_Luka} and the decomposition~\eqref{eq:decomposition_labels}, implies the claim of the theorem.

\section{Convergence of random maps}
\label{sec:convergence_cartes}

The proof of Theorems~\ref{thm:marginales_hauteur},~\ref{thm:tension_etiquettes}, and~\ref{thm:convergence_etiquettes} on labelled forests will occupy most of this paper. Before proving them, let us first in this section deduce from them the results on random maps stated in Theorems~\ref{thm:tension_cartes}, \ref{thm:convergence_carte_disque} and~\ref{thm:convergence_cartes_CRT}. The argument finds its root in the work of Le~Gall~\cite{LG13} and has already been adapted in many contexts~\cite{LG13, Abr16, BJM14, BM17, ABA17, ABA21, Mar18b, Mar18a} so we shall be very brief and refer to the preceding references for details.

Let us first define the topology we use in these theorems. Recall that we view a planar map as a compact metric space equipped with a Borel probability measure. 
In words, two such spaces $(X, d_X, p_X)$ and $(Y, d_Y, p_Y)$ are close to each other if one can find a subset of each which carries most of the mass and which are close to be isometric. 
Formally, a \emph{correspondence} between $X$ and $Y$ is a subset $R \subset X \times Y$ such that for every $x \in X$, there exists $y \in Y$ such that $(x,y) \in R$ and vice-versa. The \emph{distortion} of $R$ is defined as
\[\mathrm{dis}(R) = \sup\left\{\left|d_X(x,x') - d_Y(y,y')\right| ; (x,y), (x', y') \in R\right\}.\]
Then the \emph{Gromov--Hausdorff--Prokhorov distance} between these spaces is the infimum of all the values $\varepsilon > 0$ such that there exists a coupling $\nu$ between $p_X$ and $p_Y$ and a compact correspondence $R$ between $X$ and $Y$ such that
\[\nu(R) \ge 1-\varepsilon \qquad\text{and}\qquad \mathrm{dis}(R) \le 2 \varepsilon.\]
This is only a pseudo-distance, but after taking the quotient by measure-preserving isometries, one gets a genuine distance which is separable and complete, see Miermont~\cite[Proposition~6]{Mie09}.

Recall that starting from a uniformly random map in $\Mn$, we may sample a vertex $x_\star$ independently and uniformly at random to obtain a uniformly random pointed map in $\PMn$, which we may then re-root at one of the $\varrho_n$ possible edges on the boundary chosen uniformly at random which make this pointed map $(\mn, x_\star)$ negative. Let $\mn\setminus\{x_\star\}$ be the metric measured space given by the vertices of $\mn$ different from $x_\star$, their graph distance \emph{in $\mn$} and the uniform probability measure and note that the Gromov--Hausdorff--Prokhorov distance between $\mn$ and $\mn\setminus\{x_\star\}$ is bounded by one so it suffices to prove our claims for $\mn\setminus\{x_\star\}$. This will enable us to rely on the bijection with a labelled forest $(\fn, \ell)$.

We shall implicitly assume that $d_{n}(0) \to \infty$, otherwise we may extract a subsequence along which the number of vertices in our maps is uniformly bounded, so if one removes all the double edges (i.e. glue together both sides of a face with degree $2$), which play no role in the geometry of the associated metric measured space, then only a bounded number of edges remain and tightness is clear.

\subsection{Tightness of planar maps}
\label{sec:tension_cartes}

Let us first construct the possible subsequential limits of $\mn$, relying on a subsequential limit of the label process provided by Theorem~\ref{thm:tension_etiquettes} before proving Theorem~\ref{thm:tension_cartes}. 

Recall that in the bijection relating $(\mn, x_\star)$ to $(\fn, \ell)$, the vertices of the former different from $x_\star$ correspond to the leaves of the latter, and the internal vertices of $\fn$ are identified with their last child. Therefore, for every vertex $x$ of $\fn$, we let $\varphi(x)$ be the vertex of $\mn\setminus\{x_\star\}$ in one-to-one correspondence with the right-most leaf amongst the descendants of $x$ in $\fn$. Let us list the vertices of $\fn$ as $x_0 < x_1 < \dots < x_{\vertices_n-1}$ in lexicographical order, set $x_{\vertices_n} = x_{\vertices_n-1}$, and for every $i,j \in \{0, \dots, \vertices_n\}$, let us set
\[d_n(i,j) = \dgr(\varphi(x_i), \varphi(x_j)),\]
where $\dgr$ is the graph distance in $\mn$. We then extend $d_n$ to a continuous function on $[0, \vertices_n]^2$ by `bilinear interpolation' on each square of the form $[i,i+1] \times [j,j+1]$ as in~\cite[Section~2.5]{LG13} or~\cite[Section~7]{LGM11}. For every $0 \le s \le t \le 1$, let us set
\[d_{(n)}(s, t) = (\sigma_n + \varrho_n)^{-1/2} d_n(\vertices_n s, \vertices_n t)
\quad\text{and}\quad
L_{(n)}(t) = (\sigma_n + \varrho_n)^{-1/2} \Lfn(\vertices_n t).\]

For a continuous function $g : [0, 1] \to \R$, let us set for every $0 \le s \le t \le 1$,
\begin{equation}\label{eq:distance_fonction_continue}
D_g(s,t) = D_g(t,s) = g(s) + g(t) - 2 \max\left\{\min_{r \in [s, t]} g(r); \min_{r \in [0, s] \cup [t, 1]} g(r)\right\}.
\end{equation}
For $0 \le i < j \le \vertices_n$, let $[i, j]$ denote the set of integers from $i$ to $j$, and let $[j,i]$ denote $[j, \vertices_n] \cup [0, i]$. Recall that we construct our map from a labelled forest, using a Schaeffer-type bijection; following the chain of edges drawn starting from two points of the forest to the next one with smaller label until they merge, one obtains the following upper bound on distances:
\begin{equation}\label{eq:borne_sup_cactus}
d_n(\vertices_n s, \vertices_n t) \le D_{\Lfn}(\vertices_n s, \vertices_n t) + 2,
\end{equation}
see Le~Gall~\cite[Lemma~3.1]{LG07} for a detailed proof in a different context.

According to Theorem~\ref{thm:tension_etiquettes}, from every increasing sequence of integers, one can extract a subsequence along which the processes $L_{(n)}$ converge in distribution to some limit process, say $L$. From the previous bound, one can extract a further subsequence along which we have
\begin{equation}\label{eq:convergence_distances_sous_suite}
\left(L_{(n)}(t), D_{L_{(n)}}(s, t), d_{(n)}(s, t)\right)_{s,t \in [0,1]}
\cvloi
(L_t, D_L(s,t), d_\infty(s,t))_{s,t \in [0,1]},
\end{equation}
where $(d_\infty(s,t))_{s,t \in [0,1]}$ depends a priori on the subsequence and, by~\eqref{eq:borne_sup_cactus}, satisfies $d_\infty \le D_L$, see~\cite[Proposition~3.2]{LG07} for a detailed proof in a similar context.

The fonction $d_\infty$ is continuous on $[0,1]^2$ and is a pseudo-distance (as limit of $d_{(n)}$). 
Let then $M_\infty$ be the quotient $[0,1] / \{d_\infty=0\}$ equipped with the metric induced by $d_\infty$, which we still denote by $d_\infty$. We let $\Pi_\infty$ be the canonical projection from $[0,1]$ to $M_\infty$ which is continuous (since $d_\infty$ is) so $(M_\infty, d_\infty)$ is a compact metric space, which finally we endow with the Borel probability measure $p_\infty$ given by the push-forward by $\Pi_\infty$ of the Lebesgue measure on $[0,1]$.

Theorem~\ref{thm:tension_cartes} directly follows from the following result.

\begin{prop}\label{prop:limite_sous_suite_carte}
On a subsequence along which~\eqref{eq:convergence_distances_sous_suite} holds we have the convergence
\[\left(\Vmn\setminus\{x_\star\}, (\sigma_n + \varrho_n)^{-1/2} \dgr, \pgr\right) \cvloi (M_\infty, d_\infty, p_\infty)\]
for the Gromov--Hausdorff--Prokhorov topology. Furthermore, the following identity in law holds:
\begin{equation}\label{eq:distance_points_unif_carte_sous_suite}
d_\infty(U,U') \eqloi L_U - \min_{t \in [0,1]} L_t,
\end{equation}
where $U$, $U'$ are i.i.d. uniform random variables on $[0,1]$ and independent of everything else.
\end{prop}
 
\begin{proof}
Let us implicitly restrict ourselves to a subsequence along which~\eqref{eq:convergence_distances_sous_suite} holds.
Appealing to Skorokhod's representation theorem, let us assume furthermore that it holds almost surely. 
Recall that for a vertex $x$ of $\fn$, we denote by $\varphi(x)$ the vertex of $\mn\setminus\{x_\star\}$ in one-to-one correspondence with the leaf at the extremity of the right-most ancestral line starting from $x$ in $\fn$.
Then the sequence $(\varphi(x_i))_{0 \le i \le \vertices_n}$ lists \emph{with redundancies} the vertices of $\mn$ different from $x_\star$. For every $1 \le i \le d_n(0)$, let us denote by $\lambda(i) \in \{0, \dots, \vertices_n-1\}$ the index such that $x_{\lambda(i)}$ is the $i$'th leaf of $\fn$, and extend $\lambda$ linearly between integer times. The function $\lambda$ corresponds to $\zeta_{\{-1\}}(\,\cdot\,; \Wfn)$ with the notation of Lemma~\ref{lem:proportion_feuilles}. According to this lemma, we have
\begin{equation}\label{eq:approximation_sites_aretes_carte}
\left(\vertices_n^{-1} \lambda(\lfloor d_n(0) t\rfloor) ; t \in [0,1]\right) \cvproba (t ; t \in [0,1]),
\end{equation}
Observe that the sequence $(\varphi(x_{\lambda(i)}))_{1 \le i \le d_n(0)}$ now lists \emph{without redundancies} the vertices of $\mn$ different from $x_\star$. The set
\[\mathscr{R}_n = \left\{\left(\varphi(x_{\lambda(\lfloor d_n(0) t\rfloor)}), \Pi_\infty(t)\right) ; t \in [0,1]\right\}.\]
is a correspondence between $\mn\setminus\{x_\star\}$ and $M_\infty$. Let further $\nu$ be the coupling between $\pgr$ and $p_\infty$ given by
\[\int_{(\mn\setminus\{x_\star\}) \times M_\infty} f(x, z) \d\nu(x, z) = \int_0^1 f\left(\varphi(x_{\lambda(\lfloor d_n(0) t\rfloor)}), \Pi_\infty(t)\right) \d t,\]
for every test function $f$. Then $\nu$ is supported by $\mathscr{R}_n$ by construction. Finally, the distortion of $\mathscr{R}_n$ is given by
\[\sup_{s,t \in [0,1]} \left|d_{(n)}\left(\frac{\lambda(\lfloor d_n(0) s\rfloor)}{\vertices_n}, \frac{\lambda(\lfloor d_n(0) t\rfloor)}{\vertices_n}\right) - d_\infty(s,t)\right|,\]
which, appealing to~\eqref{eq:approximation_sites_aretes_carte}, tends to $0$ whenever the convergence~\eqref{eq:convergence_distances_sous_suite} holds, which concludes the proof of the convergence of the maps.

Let us next consider the identity~\eqref{eq:distance_points_unif_carte_sous_suite} which we shall need shortly in the proof of Theorem~\ref{thm:convergence_carte_disque}. If $U$, $U'$ are i.i.d. uniform random variables on $[0,1]$ and independent of everything else, then $X_n = \varphi(x_{\lambda(\lfloor d_n(0) U\rfloor)})$ and $Y_n = \varphi(x_{\lambda(\lfloor d_n(0) U'\rfloor)})$ are uniform random vertices of $V(\mn) \setminus \{x_\star\}$, they can therefore be coupled with two independent uniform random vertices $X_n'$ and $Y_n'$ of $\mn$ in such a way that the conditional probability given $\mn$ that $(X_n,Y_n) \ne (X_n', Y_n')$ converges to $0$; we implicitly assume in the sequel that $(X_n,Y_n) = (X_n', Y_n')$. Since $x_\star$ is also a uniform random vertex of $\mn$, we obtain that
\[\dgr(X_n,Y_n) \eqloi \dgr(x_\star, Y_n).\]
By definition we have $\dgr(X_n,Y_n) = d_n(\lambda(\lfloor d_n(0) U\rfloor), \lambda(\lfloor d_n(0) U'\rfloor))$ and by construction of the labels on $\fn$, we have
\[\dgr(x_\star, Y_n) = \Lfn(\lambda(\lfloor d_n(0) U'\rfloor)) - \min_{0 \le j \le \vertices_n} \Lfn(j) + 1.\]
Letting $n \to \infty$ along the same subsequence as in~\eqref{eq:convergence_distances_sous_suite} and appealing to~\eqref{eq:approximation_sites_aretes_carte}, we obtain~\eqref{eq:distance_points_unif_carte_sous_suite}.
\end{proof}

\subsection{Convergence towards a Brownian disk}
\label{sec:carte_brownienne}

Let us next turn to the proof of Theorem~\ref{thm:convergence_carte_disque}; in view of the proof of Theorem~\ref{thm:tension_cartes} it suffices to identify the subsequential limit $d_\infty$. 
Fix $\varrho \in [0, \infty)$ and let us recall the distance $\dBmap^\varrho$ of the Brownian disk, following Le~Gall~\cite{LG07} and Bettinelli \& Miermont~\cite{BM17} to which we refer for details. Recall the process $Z^\varrho$ defined in Section~\ref{sec:enonce_tension_convergence_etiquettes} and let us construct the function $D_{Z^\varrho}$ on $[0,1]^2$ as in~\eqref{eq:distance_fonction_continue}; first, let us view $D_{Z^\varrho}$ as a function on the forest $\CRT_{X^\varrho}$ by setting
\[D_{Z^\varrho}(x,y) = \inf\left\{D_{Z^\varrho}(s,t) ; s,t \in [0,1], x=\pi_{X^\varrho}(s) \text{ and }  y=\pi_{X^\varrho}(t)\right\},\]
for every $x, y \in \CRT_{X^\varrho}$, where we recall the notation $\pi_{X^\varrho}$ for the canonical projection. This function is not a pseudo-distance on $\CRT_{X^\varrho}$ but one can construct one as follows: for every $x, y \in \CRT_{X^\varrho}$, set
\[\dBmap^\varrho(x,y) = \inf\left\{\sum_{i=1}^k D_{Z^\varrho}(a_{i-1}, a_i) ; k \ge 1, (x=a_0, a_1, \dots, a_{k-1}, a_k=y) \in \CRT_{X^\varrho}\right\}.\]
The function $\dBmap^\varrho$ is in fact the largest pseudo-distance on $\CRT_{X^\varrho}$ which is bounded by $D_{Z^\varrho}$. Indeed if $D$ is another pseudo-distance with $D \le D_{Z^\varrho}$, then for every $x,y \in \CRT_{X^\varrho}$, for every $k \ge 1$ and every chain $x=a_0, a_1, \dots, a_{k-1}, a_k=y$ in $\CRT_{X^\varrho}$, by the triangle inequality, $D(x, y) \le \sum_{i=1}^k D(a_{i-1}, a_i) \le \sum_{i=1}^k D_{Z^\varrho}(a_{i-1}, a_i)$ and so $D(x, y) \le \mathscr{D}^\varrho(x, y)$.
Furthermore, it can be seen as a pseudo-distance on $[0,1]$ by setting $\dBmap^\varrho(s,t) = \dBmap^\varrho(\pi_{X^\varrho}(s),\pi_{X^\varrho}(t))$ for every $s,t \in [0,1]$. Then for all $s,t \in [0,1]$ such that $\dCRT_{X^\varrho}(s,t) = 0$ we have $\pi_{X^\varrho}(s) = \pi_{X^\varrho}(t)$ and so $\mathscr{D}^\varrho(\pi_{X^\varrho}(s),\pi_{X^\varrho}(t)) = 0$. We deduce from the previous maximality property that $\mathscr{D}^\varrho$ is the largest pseudo-distance $D$ on $[0,1]$ satisfying the following two properties:
\[D \le D_{Z^\varrho}
\qquad\text{and}\qquad
\dCRT_{X^\varrho}(s,t) = 0 \quad\text{implies}\quad D(s,t) = 0.\]
The Brownian disk is then given by the quotient $[0,1] / \{\dBmap^\varrho=0\}$, endowed with the metric $\dBmap^\varrho$ and the push-forward $\pBmap^\varrho$ of the Lebesgue measure on $[0,1]$.

Let us now prove Theorem~\ref{thm:convergence_carte_disque}.

\begin{proof}[Proof of Theorem~\ref{thm:convergence_carte_disque}]
Let us first assume that $\lim_{n \to \infty} \sigma_n^{-1} \Delta_n = 0$ and that $\lim_{n \to \infty} \sigma_n^{-1} \varrho_n = \varrho$ with $\varrho \in [0,\infty)$ and let us prove that our maps converge to the Brownian disk.
Let us set for all $t \in [0,1]$,
\[W_{[n]}(t) = \frac{1}{\sigma_n} \Wfn(\lfloor \vertices_n t \rfloor)
\qquad\text{and}\qquad
L_{[n]}(t) = \left(\frac{3}{2\sigma_n}\right)^{1/2} \Lfn(\vertices_n t),\]
and for all $s,t \in [0,1]$
\[d_{[n]}(s, t) = \left(\frac{3}{2\sigma_n}\right)^{1/2} d_n(\vertices_n s, \vertices_n t).\]
Then we infer from Proposition~\ref{prop:convergence_Luka}, Theorem~\ref{thm:convergence_etiquettes}~\eqref{thm:convergence_etiquettes_carte_disque}, and the preceding subsection that from every increasing sequence of integers, one can extract a subsequence along which we have
\[\left(W_{[n]}(t), L_{[n]}(t), D_{L_{[n]}}(s, t), d_{[n]}(s, t)\right)_{s,t \in [0,1]}
\cvloi
(X^\varrho, Z^\varrho_t, D_{Z^\varrho}(s,t), d_\infty(s,t))_{s,t \in [0,1]},\]
where $d_\infty$ depends a priori on the subsequence.
Again by Skorokhod's representation theorem, let us assume that this holds almost surely.
It remains to prove that $d_\infty = \dBmap^\varrho$ almost surely.
Our argument is adapted from the work of Bettinelli \& Miermont~\cite[Lemma~32]{BM17}. 

First recall from the previous proof that $d_\infty$ is a pseudo-distance on $[0,1]$ which satisfies $d_\infty \le D_{Z^\varrho}$ almost surely. Let $0 \le s < t \le 1$ be such that $\dCRT_{X^\varrho}(s,t) = 0$, i.e. such that $X^\varrho_s = X^\varrho_t = \min_{r \in [s,t]} X^\varrho_r$. Suppose first that $X^\varrho_r > X^\varrho_t$ for all $r \in (s,t)$, then by the preceding convergence one can find two sequences of integers $(i_n)_{n \ge 1}$ and $(j_n)_{n \ge 1}$ such that $i_n / \vertices_n \to s$ and $j_n / \vertices_n \to t$ and for all $n$ large enough $\Wfn(i_n) = \Wfn(j_n) \le \Wfn(k)$ for every $i_n+1 \le k \le j_n-1$. In terms of the forest $\fn$, this means that the vertex $x_{i_n}$ is an ancestor of $x_{j_n}$ and that furthermore the latter lies on the right-most ancestral line amongst the descendants of $x_{i_n}$ in $\fn$, so in particular $\varphi(x_{i_n}) = \varphi(x_{j_n})$, hence $d_n(i_n, j_n) = 0$. Letting $n \to \infty$ along the same subsequence as above, we conclude that $d_\infty(s,t) = 0$. If there exists $r \in (s,t)$ with $X^\varrho_s = X^\varrho_t = X^\varrho_r$, then it is unique and we conclude similarly that $d_\infty(s,r) = d_\infty(r,t) = 0$.
From the maximality property of $\dBmap^\varrho$, we deduce the bound
\[d_\infty \le \dBmap^\varrho
\qquad\text{almost surely}.\]

Next let $U$, $U'$ be i.i.d. uniform random variables on $[0,1]$ and independent of everything else. Recall the identity~\eqref{eq:distance_points_unif_carte_sous_suite} which reads here $d_\infty(U,U') = Z^\varrho_U - \min Z^\varrho$ in distribution. The key point is that, according to Le~Gall~\cite[Corollary 7.3]{LG13} for $\varrho = 0$ and Bettinelli \& Miermont~\cite[Lemma~17 \& Corollary~21]{BM17} for $\varrho > 0$, the right-hand side is also distributed as $\dBmap^\varrho(U,U')$. We conclude that the identity
\[d_\infty(U,U') = \dBmap^\varrho(U,U')\]
holds in distribution and thus almost surely by the previous bound. The identity $d_\infty = \dBmap^\varrho$ follows by a density argument.

We end this proof by arguing that our assumption on the largest degree is necessary. Indeed, extracting a subsequence if necessary, let us assume that $\sigma_n^{-1} \varrho_n$ and $\sigma_n^{-1} \Delta_n$ converge respectively to $\varrho \ge 0$ and $\delta > 0$ and that $(V(\mn), \sigma_n^{-1/2} \dgr, \pgr)$ converges in distribution to some random space $(M, D, p)$ and let us prove that the latter does not have the topology of $\Bmap^\varrho$, which is that of the sphere if $\varrho = 0$~\cite{LGP08, Mie08} or the disk if $\varrho > 0$~\cite{Bet15}. Let us label all the vertices by their graph distance to the distinguished vertex $x_\star$. Let $\Phi_n$ be an inner face with degree $2\Delta_n \sim 2 \delta \sigma_n$, then the labels of its vertices read in clockwise order form a (shifted) bridge with $\pm1$ steps which, when rescaled by a factor of order $\Delta_n^{-1/2}$ converges towards a Brownian bridge. Let $x_n^-, x_n^+$ be two vertices of $\Phi_n$ such that their respective labels are the minimum and the maximum over all labels on $\Phi_n$. Then when $\varepsilon > 0$ is small, with high probability we have that $\ell(x_n^+) - \ell(x_n^-) > 6 \varepsilon \sigma_n^{1/2}$.

Our argument is depicted on Figure~\ref{fig:grande_face}; let us describe it. Let us read the vertices on the face $\Phi_n$ from $x_n^-$ to $x_n^+$ in clockwise order, there is a vertex which is the last one with label smaller than $\ell(x_n^-) + \varepsilon \sigma_n^{1/2}$ and another one which is the first one with label larger than $\ell(x_n^+) - \varepsilon \sigma_n^{1/2}$; let us call `blue vertices' all the vertices visited between these two. Let us similarly call `green vertices' the vertices defined similarly when going from $x_n^-$ to $x_n^+$ in counter-clockwise order. A vertex may be simultaneously green and blue, but in this case, and more generally if there exists a pair of blue and green vertices at graph distance $o(\sigma_n^{1/2})$ in the whole map, then this creates at the limit a pinch-point separating the map into two parts each with diameter larger than $\varepsilon \sigma_n^{1/2}$.

\begin{figure}[!ht] \centering
\includegraphics[height=9\baselineskip]{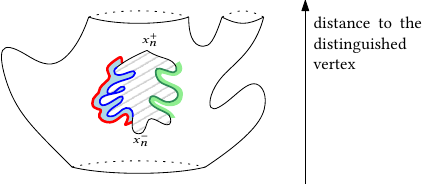}
\caption{A portion of the pointed map represented as a surface (the so-called `cactus' representation), seen from the distinguished vertex. If the grey face is macroscopic and the light blue and light green regions are disjoint, then the red simple path separates the map into two macroscopic parts.}
\label{fig:grande_face}
\end{figure}

We assume henceforth that the distance between green and blue vertices is larger than $4\eta \sigma_n^{1/2}$ for some $\eta > 0$; the light blue and light green regions in Figure~\ref{fig:grande_face} represent the hull of the set of vertices at distance smaller than $\eta \sigma_n^{1/2}$ from the blue and green vertices respectively. Consider the simple red path obtained by taking the boundary of the light blue region: its extremities lie at macroscopic distance and it separates two parts of the map with macroscopic diameter. A sphere cannot be separated by a simple curve with distinct extremities so this yields our claim when $\varrho = 0$.
This is possible on a disk, if the path touches the boundary twice. Therefore, in order to avoid a contradiction, the red path must contain (at least) two points at macroscopic distance from each other, and both at microscopic distance from the boundary of the map. If none of these points is at microscopic distance from an extremity of the path, then the preceding argument still applies, so both extremities, which belong to $\Phi_n$ must lie at microscopic distance from the boundary, but then this creates pinch-points at the limit.
\end{proof}

\subsection{Convergence to the Brownian tree}
\label{sec:convergence_carte_arbre}

Let us finally prove Theorem~\ref{thm:convergence_cartes_CRT} relying on Theorem~\ref{thm:convergence_etiquettes}~\eqref{thm:convergence_etiquettes_arbre}. The idea is that the greatest distance in the map to the boundary is small compared to the scaling so only the boundary remains in the limit, and furthermore this boundary, whose distances are related to a discrete bridge, converges to the Brownian tree, which is encoded by the Brownian bridge.

\begin{proof}[Proof of Theorem~\ref{thm:convergence_cartes_CRT}]
Let us denote by $\bb$ and $X^0$ respectively the standard Brownian bridge and the standard Brownian excursion. Recall the construction of the Brownian tree $(\CRT_{X^0}, \dCRT_{X^0}, \pCRT_{X^0})$ and let us define a random pseudo-distance $D_{\bb}$ as in~\eqref{eq:distance_fonction_continue}; let further $(\CRT_{\bb}, D_{\bb}, p_{\bb})$ be the space constructed as $M_\infty$ where $d_\infty$ is replaced by $D_{\bb}$, so $\CRT_{X^0}$ and $\CRT_{\bb}$ are obtained by taking the quotient of $[0,1]$ by  $\{\dCRT_{X^0} = 0\}$ and $\{D_{\bb} = 0\}$ respectively.
Comparing the definition of $D_{\bb}$ and $\dCRT_{X^0}$, since $\bb$ and $X^0$ are related by the Vervaat transform, it is easy to prove that the spaces $(\CRT_{X^0}, \dCRT_{X^0}, \pCRT_{X^0})$ and $(\CRT_{\bb}, D_{\bb}, p_{\bb})$ are isometric so it is equivalent to prove the convergence in distribution
\[(V(\mn)\setminus\{x_\star\}, (2\varrho_n)^{-1/2} \dgr, \pgr) \cvloi (\CRT_{\bb}, D_{\bb}, p_{\bb})\]
in the Gromov--Hausdorff--Prokhorov topology. From the previous proofs it actually suffices to prove the convergence in distribution for the uniform topology
\begin{equation}\label{eq:cv_distance_carte_arbre}
\left((2\varrho_n)^{-1/2} d_n(\vertices_n s, \vertices_n t)\right)_{s,t \in [0,1]} \cvloi (D_{\bb}(s,t))_{s,t \in [0,1]}.
\end{equation}

Here we may adapt the argument of the proof of Theorem~5 in~\cite{Bet15} to which we refer for details. 
First, 
recall the upper bound~\eqref{eq:borne_sup_cactus}:
\[d_n(i,j) \le \Lfn(i) + \Lfn(j) + 2 - 2 \max\left\{\min_{k \in [i, j]} \Lfn(k); \min_{k \in [j, i]} \Lfn(k)\right\},\]
where for $0 \le i < j \le \vertices_n$, the interval $[i, j]$ denotes the set of integers from $i$ to $j$, and $[j,i]$ denotes $[j, \vertices_n] \cup [0, i]$.
We have a very similar lower bound, see the proof of Corollary~4.4 in~\cite{CLGM13}: 
\begin{equation}\label{eq:borne_inf_cactus}
d_n(i,j) \ge \Lfn(i) + \Lfn(j) - 2 \max\left\{\min_{k \in \llbracket i, j\rrbracket} \Lfn(k); \min_{k \in \llbracket j, i\rrbracket} \Lfn(k)\right\},
\end{equation}
where the intervals are defined as follows. Let us link the roots of two consecutive trees in the forest, as well as the first and last one in order to create a cycle. Then $\mathopen{\llbracket}i, j\mathclose{\rrbracket}$ denotes the set of all indices $k$ such that $x_k$ lies in the geodesic path between $x_i$ and $x_j$ in this graph; in other words, $k \ge 1$ belongs to $\mathopen{\llbracket}i, j\mathclose{\rrbracket}$ if either $x_k$ is an ancestor of $x_i$ or of $x_j$ (and it is an ancestor of both if and only if it is their last common one), or if it is the root of a tree which lies between $x_i$ and $x_j$ in the original forest. We define $\mathopen{\llbracket}j , i\mathclose{\rrbracket}$ similarly as the set of all indices $k \ge 1$ such that either $x_k$ is an ancestor of $x_i$ or of $x_j$, or it is the root of a tree which \emph{does not} lie between $x_i$ and $x_j$.

Let us suppose that $\lim_{n \to \infty} \sigma_n^{-1} \varrho_n = \infty$; according to 
Theorem~\ref{thm:convergence_etiquettes}~\eqref{thm:convergence_etiquettes_arbre} the convergence in distribution
\[\left((2\varrho_n)^{-1/2} \Lfn(\vertices_n t); t \in [0,1]\right) \cvloi (\bb_t; t \in [0,1])\]
holds in $\mathscr{C}([0,1],\R)$. 
Then the right-hand side of~\eqref{eq:borne_sup_cactus} with $i=\vertices_n s$ and $j = \vertices_n t$ converges in distribution to $D_{\bb}(s,t)$ once rescaled by a factor $(2\varrho_n)^{-1/2}$. The same holds for the right-hand side of~\eqref{eq:borne_inf_cactus}; indeed, the root vertices visited in $[i,j]$ and in $\mathopen{\llbracket}i, j\mathclose{\rrbracket}$ are the same, so the only difference lies in the non-root vertices, but their label differ by that of the root of their tree by at most $\max \tLfn - \min \tLfn = o(\varrho_n^{1/2})$ in probability by the proof of Theorem~\ref{thm:convergence_etiquettes}~\eqref{thm:convergence_etiquettes_arbre}. This proves the convergence~\eqref{eq:cv_distance_carte_arbre} and hence our claim.
\end{proof}

\section{Geometry of random forests with a prescribed degree sequence}
\label{sec:forets}

As we already mentioned, most of this paper is devoted to the study of labelled forests. In this section we focus on the random forest $\fn$, the labels are studied in Section~\ref{sec:etiquettes}. 
We first state in Section~\ref{sec:epine} a technical spinal decomposition whose proof is postponed to the appendix, which approximates the offspring distribution of the ancestors of random vertices by random picks, without replacement, from the \emph{size-biased} offspring distribution 
$\sum_{k \ge 1} \frac{k d_n(k)}{\edges_n} \delta_{k}$. 
Then in Section~\ref{sec:Luka} we state and prove some exponential concentration for the random bridge $\Bfn$ and the excursion $\Wfn$ which are combined with the spinal decomposition in Section~\ref{sec:queues_exp} to prove exponential bounds for the height of a random vertex.
Finally we prove Theorem~\ref{thm:marginales_hauteur} in Section~\ref{sec:marginales_hauteur} on the convergence of the reduced trees to reduced Brownian trees in the case of no macroscopic degrees, relying again on the spinal decomposition.

Throughout this section we are given for every $n \ge 1$ a degree sequence $d_n = (d_n(k))_{k \ge 0} \in \Z_+^{\Z_+}$ and $\fn$ is sampled uniformly at random in the set $\Fn$ of forests which possess $\varrho_n = \sum_{k \ge 0} (1-k) d_n(k)$ trees and $\vertices_n = \sum_{k \ge 0} d_n(k)$ vertices, amongst which $d_n(k)$ have $k$ children for every $k \ge 0$; it therefore has $\edges_n = \sum_{k \ge 1} k d_n(k)$ edges and we recall the notation $\Delta_n = \max\{k \ge 0 : d_n(k) > 0\}$ for the largest offspring and $\sigma_n^2 = \sum_{k \ge 1} k (k-1) d_n(k)$.

\subsection{A spinal decomposition}
\label{sec:epine}

We describe in this section the ancestral lines of i.d.d. random vertices of the random forest $\fn$. A related result on trees was first obtained by Broutin \& Marckert~\cite{BM14} for a single random vertex and it was extended in~\cite{Mar18b} to several vertices. The present one is different, we shall compare them after the statement.

Suppose that an urn contains initially $k d_n(k)$ balls labelled $k$ for every $k \ge 1$, so $\edges_n$ balls in total; let us pick balls repeatedly one after the other \emph{without} replacement; for every $1 \le i \le \edges_n$, we denote the label of the $i$'th ball by $\xi_{d_n}(i)$. Conditionally on $(\xi_{d_n}(i))_{1 \le i \le \edges_n}$, let us sample independent random variables $(\chi_{d_n}(i))_{1 \le i \le \edges_n}$ such that each $\chi_{d_n}(i)$ is uniformly distributed in $\{1, \dots, \xi_{d_n}(i)\}$.
We shall use the fact that the $\xi_{d_n}(i)$'s are identically distributed, with
\begin{equation}\label{eq:moyenne_biais_par_la_taille}
\begin{gathered}
\Es{\xi_{d_n}(i) - 1} = \sum_{k \ge 1} (k-1) \frac{k d_n(k)}{\edges_n} = \frac{\sigma_n^2}{\edges_n},
\\
\Var\left(\xi_{d_n}(i)-1\right) \le \Es{(\xi_{d_n}(1)-1) \cdot \max_i (\xi_{d_n}(i)-1)} \le \Delta_n \frac{\sigma_n^2}{\edges_n}.
\end{gathered}
\end{equation}

Let us fix a plane forest $F$. Recall that we denote by $\chi_{x}$ the relative position of a vertex $x$ amongst its siblings; for every $0 \le i \le |x|$, let us also denote by $a_i(x)$ the ancestor of $x$ at height $i$, so $a_{|x|}(x) = x$ and $a_0(x)$ is the root of the tree containing $x$. Define next for every vertex $x$ the \emph{content} of the branch $\mathopen{\llbracket}a_0(x), x\mathclose{\llbracket}$ as
\begin{equation}\label{eq:content}
\Cont(x) = \left((k_{a_{i-1}(x)}, \chi_{a_i(x)}); 1 \le i \le |x|\right).
\end{equation}
In words, $\Cont(x)$ lists the number of children of each strict ancestor of $x$ and the position of the next ancestor amongst these children. More generally, let $x_1, \dots, x_q$ be $q$ vertices of $F$ and let us consider the forest $F$ \emph{reduced} to its root and these vertices: $F(x_1, \dots, x_q)$ contains only the vertices $x_1, \dots, x_q$ and their ancestors, and it naturally inherits a plane forest structure from $F$.
Now let $\Cont(x_1, \dots, x_q)$ be defined as the collection of pairs $(k_{pr(y)}, \chi_y)$ where these quantities are those \emph{in the original forest $F$}, and $y$ ranges in lexicographical order over the set of vertices of the reduced forest whose parent has only one offspring in this reduced forest.
We refer to Figure~\ref{fig:contenu_foret} for an example. Let us stress that the branchpoints of the reduced forest are excluded from $\Cont(x_1, \dots, x_q)$.

\begin{figure}[!ht] \centering
\includegraphics[height=10\baselineskip]{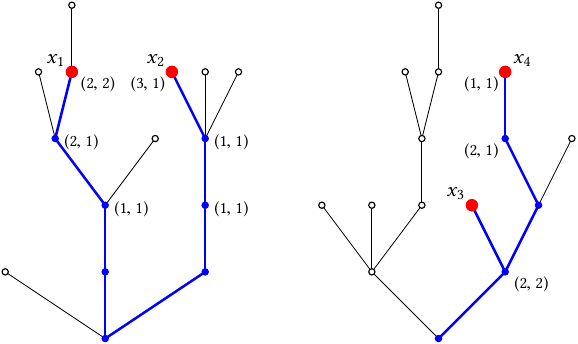}
\caption{A forest $F$ consisting of two trees with four distinguished red vertices, in blue is the reduced forest $F(x_1, \dots, x_4)$; the pairs $(k_{pr(y)}, \chi_y)$ which form the associated content $\Cont(x_1, \dots, x_4)$
are indicated next to their node $y$, the children of the blue branchpoints are excluded.}
\label{fig:contenu_foret}
\end{figure}

\begin{lem}\label{lem:multi_epines_sans_remise}
Fix $n \in \N$ and a degree sequence $d_n = (d_n(k))_{k\ge 1}$.
Let $q \ge 1$ and sample $x_{n,1}, \dots, x_{n,q}$ independently uniformly at random in $\fn$. Let $(k_i, j_i)_{i=1}^h$ be positive integers such that $1 \le j_i \le k_i$ for each $i$. If $q \ge 2$, assume that $h, q \le \vertices_n/4$. Then for every integers 
$b \ge 0$ and $c \ge 1$ with $b+c \le q$, 
the probability that $\Cont(x_{n,1}, \dots, x_{n,q}) = (k_i, j_i)_{i=1}^h$, and that the reduced forest ${T}_{d_n}^{\varrho_n}(x_{n,1}, \dots, x_{n,q})$ possesses $c$ trees, $q$ leaves, and $b$ branchpoints is bounded above by
\[q^2 2^{q - 1} \left(\frac{\sigma_n}{\vertices_n}\right)^{q + b} \left(\frac{\varrho_n}{\sigma_n}\right)^{c-1} \frac{\varrho_n + (q-1) \Delta_n + \sum_{i=1}^h (k_i - 1)}{\sigma_n}\]
times
\[\P\bigg(\bigcap_{i \le h} \left\{(\xi_{d_n}(i), \chi_{d_n}(i)) = (k_i, j_i)\right\}\bigg).\]
\end{lem}

The proof consists in straightforward but long combinatorial calculations derived from the decomposition of the forest in the spirit of~\cite{BM14} and is given in Appendix~\ref{sec:preuve_epine}. Let us make two comments.
First in the case of a single random vertex, we have $b=0$ and $c=1$ so the upper bound reads simply
\begin{equation}\label{eq:epine_un_sommet}
\frac{\varrho_n + \sum_{i=1}^h (k_i - 1)}{\vertices_n}
\cdot \Pr{\bigcap_{i \le h} \left\{(\xi_{d_n}(i), \chi_{d_n}(i)) = (k_i, j_i)\right\}}.
\end{equation}
Second, the reduced forest ${T}_{d_n}^{\varrho_n}(x_{n,1}, \dots, x_{n,q})$ possesses $q$ leaves when no $x_{n,i}$ is an ancestor of another; we shall see below that the height of a random vertex is of order $\edges_n / \sigma_n$, so this occurs with high probability as soon as $\sigma_n \to \infty$. Also, if the reduced forest has $b$ branchpoints, then $q+b$ denotes the number of branches once we remove these branchpoints; these branches typically have length of order $\edges_n/\sigma_n$ in the original forest, so the factor $(\sigma_n/\vertices_n)^{q+b}$ is important. The other factors will be typically bounded in our applications so the upper bound will read
\[\mathrm{Cst} \cdot \left(\frac{\sigma_n}{\vertices_n}\right)^{q + b} \cdot \Pr{\bigcap_{i \le h} \left\{(\xi_{d_n}(i), \chi_{d_n}(i)) = (k_i, j_i)\right\}}.\]

In words, roughly speaking, along distinguished paths (removing the branchpoints), up to a multiplicative factor, the individuals reproduce according to the \emph{size-biased law} $(kd_n(k)/\edges_n)_{k \ge 1}$, and conditionally on the offspring, the paths continue via offspring chosen uniformly at random. Note that these size-biased picks are not independent, since we are sampling without replacement. In the case of a single tree $\varrho_n = 1$, an analogous result when sampling \emph{with} replacement was obtained by Broutin \& Marckert~\cite{BM14} for a single random vertex and it was extended in~\cite{Mar18b} to several vertices. The significant difference is that when comparing to sampling with replacement, an extra factor of order $\e^{h^2/\edges_n}$ appears in the upper bound, and one cannot remove it. This was not an issue in~\cite{BM14, Mar18b} which focus on the `finite-variance regime', when $\sigma_n^2$ is of order $\edges_n$ since then the bound~\eqref{eq:queue_exp_hauteur} below provided by Addario-Berry~\cite{AB12} ensures that $h^2$ is at most of order $\edges_n$ with high probability, so the exponential factor does not explode. We mentioned already that the height of uniform random vertices is of order $\edges_n / \sigma_n$ so it does not explode anyway, but we shall prove this fact by relying on this lemma.
Moreover, we feel that sampling without replacement is more natural in this model.

\subsection{Concentration for bridges and excursions}
\label{sec:Luka}

Let us prove two technical bounds which we shall need 
in the next subsection, as well as for the proof of Theorem~\ref{thm:tension_etiquettes} on tightness of the label process in Section~\ref{sec:etiquettes}.
Recall the notation $\Bfn = (\Bfn(i))_{0 \le i \le \vertices_n}$ for a bridge from $\Bfn(0) = 0$ to $\Bfn(\vertices_n) = -\rho_n$ sampled uniformly at random amongst all those which make exactly $d_n(k)$ jumps with value $k-1$ for every $k \ge 0$. Recall the construction of the {\L}ukasiewicz path $\Wfn$ of our random forest $\fn$ as a cyclic shift of $\Bfn$. 
In Proposition~\ref{prop:convergence_Luka} we argued that then $\Bfn$ and so $\Wfn$ scale like $\sigma_n+\varrho_n$. The next proposition provides a precise H\"older-continuity like bound for the infimum process which is tailored for our applications.

\begin{prop}\label{prop:Holder_marche_Luka}
Fix a sequence $(d_n)_{n \ge 1}$ and $\varepsilon \in (0,1)$, then there exists $C > 0$ such that, for every $n \ge 1$, with probability at least $1-\varepsilon$, it holds
\[\Wfn(\lfloor\vertices_ns\rfloor) - \min_{s \le r \le t} \Wfn(\lfloor\vertices_nr\rfloor)
\le C \cdot (\sigma_n + \varrho_n) \cdot |t-s|^{(1-\varepsilon)/2}\]
uniformly for $0 \le s < t \le 1$.
\end{prop}

The proof relies on the following exponential tail bound for the random bridge $\Bfn$, adapted from~\cite[Section~3]{AB12} in the case $\varrho_n=1$.

\begin{lem}\label{lem:queue_exp_min_pont}
For every $\alpha \in (0,1)$ and every $z \ge 0$, we have
\[\Pr{\min_{i \le \alpha \vertices_n} \Bfn(i) + \alpha \varrho_n \le - z}
\le \exp\left(- \frac{(1 - \alpha)^3 z^2}{2 \alpha (\sigma_n^2+\varrho_n) + z}\right).\]
\end{lem}

\begin{proof}
It is classical that the `remaining sequence' (remaining space divided by the remaining time)
\[M_i = \frac{- \varrho_n - \Bfn(i)}{\vertices_n-i}
\qquad (0 \le i \le \vertices_n - 1)\]
is a martingale for the natural filtration $(F_i)_{0 \le i \le \vertices_n - 1}$, started from $M_0 = -\varrho_n / \vertices_n$.
Indeed, for each time $i \ge 1$, set $\Delta\Bfn(i) = \Bfn(i) - \Bfn(i-1)$ and let $d_n^{i}$ be the remaining degree sequence given by $d_n^{i}(k) = d_n(k) - \#\{j \le i : \Delta\Bfn(j) = k-1\}$ for every $k \ge 0$; note that $d_n^{i}$ is random and $F_i$-measurable. Fix $0 \le i \le \vertices_n-2$; then we have
\[\Prc{\Delta\Bfn(i+1) = k-1}{F_i} = \frac{d_n^{i}(k)}{\vertices_n-i},\qquad k \ge 0,\]
from which we obtain
\[\Esc{\Delta\Bfn(i+1)}{F_i} = \sum_{k \ge 0} (k-1) \frac{d_n^{i}(k)}{\vertices_n-i} 
= - \frac{\varrho_n + \Bfn(i)}{\vertices_n-i},\]
which shows the martingale property. Furthermore,
\[\Esc{\Delta\Bfn(i+1)^2}{F_i} = \sum_{k \ge 0} (k-1)^2 \frac{d_n^{i}(k)}{\vertices_n-i}
\le \frac{\sigma_n^2+\varrho_n}{\vertices_n-i}.\]
Since $\Bfn(i+1) \ge \Bfn(i) - 1$, then for every $0 \le i \le \vertices_n - 2$,
\[M_{i+1} - M_i
\le \frac{\vertices_n-i - \Bfn(i) - \varrho_n}{(\vertices_n-(i+1)) (\vertices_n-i)}
\le \frac{\edges_n}{(\vertices_n-(i+1))^2}
\]
and 
\[\Var(M_{i+1} \mid F_i) 
\le \frac{1}{(\vertices_n-(i+1))^2} \Esc{\Delta\Bfn(i+1)^2}{F_i}
\le \frac{\sigma_n^2 + \varrho_n}{(\vertices_n-(i+1))^3}.\]

Then the martingale Chernoff-type bound, see e.g. McDiarmid~\cite{McD98}, Theorem~3.15 and the remark at the end of Section~3.5 there about the maximum, shows that for every $z \ge 0$ and every $1 \le k \le \vertices_n-1$,
\begin{align*}
\Pr{\max_{i \le k} M_i - M_0 \ge z}
&\le \exp\left(- \frac{z^2}{2 \frac{k (\sigma_n^2+\varrho_n)}{(\vertices_n - k)^3} + \frac{2}{3} z \frac{\edges_n}{(\vertices_n - k)^2}}\right)
\\
&\le \exp\left(- \frac{(\vertices_n - k)^3 z^2}{2k (\sigma_n^2+\varrho_n) + \edges_n (\vertices_n-k) z}\right).
\end{align*}
Observe that for every $i \le k$,
\[- (M_i - M_0)
= \frac{\Bfn(i) + \varrho_n}{\vertices_n-i} - \frac{\varrho_n}{\vertices_n}
= \frac{\Bfn(i) + i \varrho_n / \vertices_n}{\vertices_n-i}
\le \frac{\Bfn(i) + k \varrho_n / \vertices_n}{\vertices_n-i},\]
we infer that for every $z \ge 0$ and every $1 \le k \le \vertices_n-1$,
\begin{align*}
\Pr{\min_{i \le k} \left(\Bfn(i) + \frac{k \varrho_n}{\vertices_n}\right) \le - z}
&\le \Pr{\min_{i \le k} \frac{\Bfn(i) + i \varrho_n / \vertices_n}{\vertices_n-i} \le - \frac{z}{\vertices_n}}
\\
&\le \exp\left(- \frac{(\vertices_n - k)^3 z^2 / \vertices_n^2}{2k (\sigma_n^2+\varrho_n) + \edges_n (\vertices_n-k) z / \vertices_n}\right).
\end{align*}
The claim follows by taking $k = \alpha \vertices_n$ with $\alpha \in (0,1)$.
\end{proof}

We may now prove Proposition~\ref{prop:Holder_marche_Luka}.

\begin{proof}[Proof of Proposition~\ref{prop:Holder_marche_Luka}]
Let us first focus on the bridge $\Bfn$ and observe that, with no jump smaller than $-1$, we have $\min_{i \le \alpha \vertices_n} \Bfn(i) + \alpha \varrho_n \ge - \alpha \vertices_n + \alpha \varrho_n =  - \alpha \edges_n$. Therefore, in Lemma~\ref{lem:queue_exp_min_pont}, the probability on the left-hand side is zero as soon as $z > \alpha \edges_n$ so for every $\alpha \in (0,1/2]$ and every $z \ge 0$,
\begin{align*}
\Pr{\min_{i \le \alpha \vertices_n} \Bfn(i) + \alpha \varrho_n \le - z}
&\le \exp\left(- \frac{(1 - \alpha)^3 z^2}{\alpha (2 (\sigma_n^2+\varrho_n) + \edges_n)}\right)
\\
&\le \exp\left(- \frac{z^2}{16 \alpha (\sigma_n^2+\varrho_n + \edges_n)}\right).
\end{align*}
By exchangeability, for any integers $i < j$ in $[0, \vertices_n]$ with $|j-i| \le \vertices_n/2$, we have
\[\P\bigg(\Bfn(i) - \min_{i \le k \le j} \Bfn(k) - \varrho_n \frac{|j-i|}{\vertices_n} > \sqrt{(\sigma_n^2+\varrho_n + \edges_n) \frac{|j-i|}{\vertices_n}} x\bigg)
\le \exp\left(- \frac{x^2}{16}\right).\]
By integrating this tail bound applied to $x^{1/p}$, we obtain that for every $0 \le s < t \le 1$ with $|t-s| \le 1/2$,
\[\Es{\left(\frac{\Bfn(\lfloor\vertices_ns\rfloor) - \min_{s \le r \le t} \Bfn(\lfloor\vertices_nr\rfloor) - \varrho_n |t-s|}{(\sigma_n^2+\varrho_n + \edges_n)^{1/2} |t-s|^{1/2}}\right)^p} 
\le \int_0^\infty \exp\left(- \frac{x^{2/p}}{16}\right) \d x
.\]
Let $c(p)$ denote the right hand side. Since $a^p + b^p \le (a+b)^p \le 2^{p-1} (a^p + b^p)$ for every $a,b \ge 0$, we conclude that
\begin{align*}
&\Es{\left(\Bfn(\lfloor\vertices_ns\rfloor) - \min_{s \le r \le t} \Bfn(\lfloor\vertices_nr\rfloor)\right)^p}
\\
&\le 2^{p-1} \left(\Es{\left(\Bfn(\lfloor\vertices_ns\rfloor) - \min_{s \le r \le t} \Bfn(\lfloor\vertices_nr\rfloor) - \varrho_n |t-s|\right)^p} + \left(\varrho_n |t-s|\right)^p\right)
\\
&\le 2^{p-1} \left(c(p) (\sigma_n^2+\varrho_n + \edges_n)^{p/2} |t-s|^{p/2} + \varrho_n^p |t-s|^{p/2}\right)
\\
&\le C(p) ((\sigma_n^{2}+\edges_{n})^{1/2} + \varrho_n)^p |t-s|^{p/2},
\end{align*}
for some $C(p) > 0$. Upon changing the constant $C(p)$, the restriction $|t-s|\le 1/2$ can be lifted by the triangle inequality.

Note that the scaling $(\sigma_n^{2}+\edges_{n})^{1/2} + \varrho_n$ that appears here is larger than the claimed one $\sigma_n + \varrho_n$. However, in the case $d_{n}(1) = 0$, it holds that $\sigma_{n}^{2} = \sum_{k \ge 2} k(k-1)d_{n}(k) \ge \sum_{k \ge 2} kd_{n}(k) = \edges_{n}$ so both scalings are of the same order. In this case, for some other constant $C(p) > 0$, for every $s<t$, it holds
\[\Es{\left(\Bfn(\lfloor\vertices_ns\rfloor) - \min_{s \le r \le t} \Bfn(\lfloor\vertices_nr\rfloor)\right)^p}
\le C(p) (\sigma_n + \varrho_n)^p |t-s|^{p/2}.\]

In the general case, let us replace the degree sequence $d_n$ by $d_n'(k) = d_n(k) \ind{k\ne1}$ and couple the two forests with degree sequence $d_n$ and $d_n'$, by removing all the vertices with outdegree $1$ in $\fn$. Let us denote by $\vertices_{n}' = \vertices_{n}-d_{n}(1)$ and $\edges_{n}' = \edges_{n}-d_{n}(1)$ the number of vertices and edges respectively of the new forest ${\fn}\vphantom{T}'$; note that both $\sigma_n$ and $\varrho_{n}$ are unchanged. Note also that the increments which are removed from $\Bfn$ in this operation are all null.
Let $s<t$ and assume for notational convenience that $\vertices_{n}s$ and $\vertices_{n} t$ are both integers. In the forest ${\fn}\vphantom{T}'$, these two instants correspond to, say, $\vertices_{n}'s_{n}$ and $\vertices_{n}'t_{n}$ respectively, and $\vertices_{n}' |t_{n}-s_{n}|$ has the binomial distribution with parameters $\vertices_{n} |t-s|$ and $\vertices_{n}'/\vertices_{n}$ and is independent of the path $\Bfn$ to which we have removed the null increments. Then the moment of order $p/2$ of $\vertices_{n}' |t_{n}-s_{n}|$ is bounded above by some constant times 
$(\vertices_{n}' |t-s|)^{p/2}$, 
so we conclude from the preceding case that
\begin{align*}
&\Es{\left(\Bfn(\lfloor\vertices_ns\rfloor) - \min_{s \le r \le t} \Bfn(\lfloor\vertices_nr\rfloor)\right)^p}
\\
&= \Es{\Esc{\left(\Bfn\vphantom{B}'(\lfloor\vertices_n' s_{n}\rfloor) - \min_{s_{n} \le r \le t_{n}} \Bfn\vphantom{B}'(\lfloor\vertices_n' r\rfloor)\right)^p}{s_{n}, t_{n}}}
\\
&\le C(p) (\sigma_n + \varrho_n)^p \Es{|t_{n}-s_{n}|^{p/2}}
\\
&\le C'(p) (\sigma_n + \varrho_n)^p |t-s|^{p/2}.
\end{align*}
The proof of the standard Kolmogorov criterion then shows that the claim of the proposition holds when we replace $\Wfn$ by $\Bfn$.

We finally want to transfer this bound to $\Wfn$ by cyclic shift; some care is needed here. 
Indeed, recall that we may couple $\Wfn$ and $\Bfn$ in such a way that $\Bfn$ is obtained by cyclically shifting $\Wfn$ at a uniform random time $\vertices_n - i_n$ independent of $\Wfn$.
Fix again $s<t$ and assume that $i = \vertices_{n} s$ and $ j= \vertices_{n} t$ are integers. 
There are two cases: either $\vertices_n - i_n$ falls (strictly) between $i$ and $j$, or it does not. If it does not, then the path of $\Wfn$ between $i$ and $j$ is moved without change in $\Bfn$ so the claim follows from the preceding bound applied to the image of $i$ and $j$ after the cyclic shift operation, which are still at distance $|j-i|$ from each other. Assume henceforth that $\vertices_n - i_n$ does fall between $i$ and $j$, then the part of $\Wfn$ between $i$ and $\vertices_n - i_n$ is moved to the part of $\Bfn$ between $i + i_n$ and $\vertices_n$, and the part of $\Wfn$ between $\vertices_n - i_n$ and $j$ is moved to the part of $\Bfn$ between $0$ and $j + i_n - \vertices_n$, so
\begin{align*}
&\Wfn(i) - \min_{i \le k \le j} \Wfn(k) 
\\
&\le\left (\Wfn(i) - \min_{i \le k \le \vertices_n - i_n} \Wfn(k)\right) + \left(\Wfn(\vertices_n - i_n) - \min_{\vertices_n - i_n \le k \le j} \Wfn(k)\right)
\\
&= \left(\Bfn(i+i_{n}) - \min_{i+i_{n} \le k \le \vertices_n} \Bfn(k)\right) + \left(\Bfn(0) - \min_{0 \le k \le j +i_{n} - \vertices_n} \Wfn(k)\right).
\end{align*}
Using again that for every $a, b \ge 0$, we have $a^p + b^p \le (a+b)^p \le 2^{p-1} (a^p + b^p)$, and since $|j-i| = |\vertices_n - (i + i_n)| + |j + i_n - \vertices_n|$, we conclude from the bound on $\Bfn$.
\end{proof}

\subsection{Exponential tails for the height and width}
\label{sec:queues_exp}

Recall from Section~\ref{sec:def_arbres_marche} that for a vertex $x$ in a tree, we denote by $\LL(x)$, resp. $\RR(x)$, the number of vertices whose parent is a strict ancestor of $x$ and which lie strictly before, resp. strictly after, $x$ in depth-first search order, so $\LR(x) = \LL(x) + \RR(x)$ denotes the total number of vertices different from $x$ branching off the path $\llbracket \varnothing, x\llbracket$. In the case of a forest, we consider these quantities in the tree containing $x$. Recall finally the identity~\eqref{eq:def_X_discret}: if $W$ is the {\L}ukasiewicz path of the forest, then $\RR(x) = W(x) - \min_{0 \le y \le x} W(y)$.

Let us note that, as in Lemma~\ref{lem:queue_exp_min_pont}, our argument is not optimal since our bounds get worse when the number of trees $\varrho_n$ grows, but in most cases, this factor is negligible.

\begin{prop}\label{prop:queues_exp_Luka}
Let $x_n$ be a uniformly random vertex of $\fn$, then 
\[\Pr{\LR(x_n) \ge z (\sigma_n^2+\varrho_n)^{1/2}} \le 4 \exp\left(- \frac{z}{288}\right)\]
uniformly in $z \ge 1/2$ and $n \in \N$.
\end{prop}

\begin{proof}
Observe that the `mirror forest' obtained from $\fn$ by flipping the order of the children of every vertex has the same law as $\fn$ so the random variables $\LL(x_n)$ and $\RR(x_n)$ have the same law; of course they are not independent. Still, since $\LR(x_n) = \LL(x_n)+\RR(x_n)$, it suffices to consider the tail of $\RR(x_n)$ with $z/2$ in place of $z$ and to use a union bound. Recall also that $\Wfn$ can be obtained by cyclically shifting the bridge $\Bfn$ at the random time $i_n$ which is uniformly distributed in $\{1, \dots, \vertices_n\}$ and is independent of $\Wfn$. In this coupling, we have that
\begin{align*}
\Wfn(\vertices_n - i_n) - \min_{0 \le j \le \vertices_n - i_n} \Wfn(j)
&= \Bfn(\vertices_n) - \min_{0 \le j \le \vertices_n} \Bfn(j)
\\
&= -\varrho_n - \min_{0 \le j \le \vertices_n} \Bfn(j).
\end{align*}
Note that $\vertices_n - i_n$ is uniformly distributed in $\{0, \dots, \vertices_n - 1\}$ and is independent of $\Wfn$, so the vertex visited at this time has the same law as $x_n$ and therefore $\RR(x_n)$ has the same law as $-\varrho_n - \min_{0 \le j \le \vertices_n} \Bfn(j)$. 
By considering the two cases where the minimum of $\Bfn$ is achieved on the first half or on the second half, we 
see that this is bounded above by
\[\left(- \frac{\varrho_n}{2} - \min_{0 \le j \le \vertices_n/2} \Bfn(j)\right) + \left(- \frac{\varrho_n}{2} - \min_{\vertices_n/2 \le j \le \vertices_n} \Bfn(j) + \Bfn\left(\frac{\vertices_n}{2}\right)\right),\]
and the two terms on the right have the same law. 
Hence for every $z \ge 1/2$ we have after two union bounds
\[\Pr{\LR(x_n) \ge z (\sigma_n^2+\varrho_n)^{1/2}}
\le 4 \cdot \Pr{\min_{0 \le j \le \vertices_n/2} \Bfn(j) + \frac{\varrho_n}{2} \le - \frac{z (\sigma_n^2+\varrho_n)^{1/2}}{4}}.\]
Finally, by Lemma~\ref{lem:queue_exp_min_pont},
\begin{align*}
\Pr{\min_{i \le \vertices_n/2} \Bfn(i) + \frac{\varrho_n}{2} \le - \frac{z (\sigma_n^2+\varrho_n)^{1/2}}{4}}
&\le \exp\left(- \frac{(1/2)^3 (\frac{z (\sigma_n^2+\varrho_n)^{1/2}}{4})^2}{(\sigma_n^2+\varrho_n) + \frac{z (\sigma_n^2+\varrho_n)^{1/2}}{4}}\right)
\\
&\le \exp\left(- \frac{2^{-5} (\sigma_n^2+\varrho_n)^{1/2} z^2}{4(\sigma_n^2+\varrho_n)^{1/2} + z}\right)
.\end{align*}
Since both $z \ge 1/2$ and $(\sigma_n^2+\varrho_n)^{1/2} \ge 1$, then 
$4(\sigma_n^2+\varrho_n)^{1/2} \le 8 (\sigma_n^2+\varrho_n)^{1/2} z$ and $z \le (\sigma_n^2+\varrho_n)^{1/2} z$ so the denominator in the last exponential is bounded above by $9 (\sigma_n^2+\varrho_n)^{1/2} z$, which yields our claim.
\end{proof}

Let us next focus on the case of a single tree $\varrho_n = 1$. Let us denote by $\wid(\tn)$ the maximum over all $i \ge 1$ of the number of vertices of $\tn$ at distance $i$ from the root, called the \emph{width} of $\tn$, and by $\h(\tn)$ the greatest $i \ge 1$ such that there exists at least one vertex of $\tn$ at distance $i$ from the root, called the \emph{height} of $\tn$. The preceding result gives a universal tail bound on the first quantity. Indeed, with a similar reasoning as in the preceding proof, one can check that
\begin{equation}\label{eq:queue_min_pont}
\Pr{\min_{0 \le j \le \vertices_n/2} \Btn(j) + \frac{1}{2} \le - z \sigma_n}
\le \exp\left(- \frac{z}{48}\right),
\end{equation}
for all $z \ge 1$. Then following~\cite{AB12}, by replacing Equation~2 there by this bound, this yields the existence of two universal constants $c_1, c_2 > 0$ such that for every $z \ge 1$,
\[\Pr{\wid(\tn) \ge z \sigma_n} \le c_1 \e^{- c_2 z}.\]
By observing that $\wid(\tn) \times \h(\tn) \ge \edges_n$, we also get
\[\Pr{\h(\tn) \le \edges_n / (z \sigma_n)} \le c_1 \e^{-c_2 z}.\]
Following~\cite{AB12} again we may obtain the following upper bound for the height of the tree $\tn$:
\begin{equation}\label{eq:queue_exp_hauteur}
\Pr{\h(\tn) \ge z (\sigma_n^2+\edges_n)^{1/2}} \le c_1 \e^{-c_2 z},
\end{equation}
for some other constants $c_1, c_2 > 0$. 
However this scaling may be much larger than $\edges_n / \sigma_n$ outside the `finite-variance' regime, when $\sigma_n$ is of order $\edges_n^{1/2}$. We already mentioned in Section~\ref{sec:intro_arbres} that an upper-bound at the scaling $\edges_n / \sigma_n$ in full generality is not possible; nonetheless, the scaling $\edges_n / \sigma_n$ is correct for typical vertices.

\begin{prop}\label{prop:queues_exp_hauteur_typique_arbre}
There exists two universal constants $c_1, c_2 > 0$ such that the following holds: if $x_n$ is a uniformly random vertex of $\tn$, then its height $|x_n|$ satisfies
\[\Pr{|x_n| \ge z \edges_n / \sigma_n} \le c_1 \e^{- c_2 z}\]
uniformly for $z \ge 1$ and $n \in \N$.
\end{prop}

Our argument is the following: if $x_n$ is at height at least $z \edges_n / \sigma_n$ and if its ancestors reproduce according to the size-biased law as the $\xi_{d_n}$'s in the preceding section, then, according to~\eqref{eq:moyenne_biais_par_la_taille}, the number of vertices $\LR(x_n)$ branching off its ancestral line is in average at least $(z \edges_n / \sigma_n) \times (\sigma_n^2 / \edges_n) = z \sigma_n$, and we know from Proposition~\ref{prop:queues_exp_Luka} that this occurs with a sub-exponential probability.

\begin{proof}
According to Proposition~\ref{prop:queues_exp_Luka}, it is sufficient to bound
\[\Pr{|x_n| \ge z \frac{\edges_n}{\sigma_n} \text{ and } \LR(x_n) \le \frac{z}{2} \sigma_n}
= \sum_{h \ge z \edges_n / \sigma_n} \Pr{|x_n| = h \text{ and } \LR(x_n) \le \frac{z}{2} \sigma_n}.\]
Recall the definition of $\Cont(x_n) = ((k_{a_{i-1}(x_n)}, \chi_{a_i(x_n)}))_{1 \le i \le |x_n|}$ from~\eqref{eq:content} and note that we have $\LR(x_n) = \sum_{1 \le i \le |x_n|} (k_{a_{i-1}(x_n)} - 1)$. 
Let us fix $h \ge z \edges_n / \sigma_n$. We deduce from Lemma~\ref{lem:multi_epines_sans_remise} with a single random vertex that
\begin{align*}
& \Pr{|x_n| = h \text{ and } \LR(x_n) \le \frac{z}{2} \sigma_n}
\\
&= \sum_{(k_i, j_i)_{i=1}^h} \Pr{\Cont(x_n) = (k_i, j_i)_{i=1}^h} \ind{\sum_{i=1}^h (k_i - 1) \le \frac{z}{2} \sigma_n}
\\
&\le \sum_{\substack{(k_i, j_i)_{i=1}^h \\ \sum_{i=1}^h (k_i - 1) \le \frac{z}{2} \sigma_n}} 
\frac{1 + \sum_{i=1}^h (k_i - 1)}{\vertices_n} \cdot 
\Pr{\bigcap_{i=1}^h \left\{(\xi_{d_n}(i), \chi_{d_n}(i)) = (k_i, j_i)\right\}}
\\
&\le \frac{\frac{z}{2} \sigma_n + 1}{\edges_n} \Pr{\sum_{i=1}^h \left(\xi_{d_n}(i) - 1\right) \le \frac{z}{2} \sigma_n}.
\end{align*}
Let us write $X_n(i) = \xi_{d_n}(i) - 1$ in order to simplify the notation. Recall from~\eqref{eq:moyenne_biais_par_la_taille} that these random variables have mean $\sigma_n^2 / \edges_n$ so for $h \ge z \edges_n / \sigma_n$, 
the probability in the last line is bounded above by
\[\Pr{\sum_{i=1}^h \left(X_n(i) - \E[X_n(i)]\right) \le - \frac{h \sigma_n^2}{2 \edges_n}}.\]
The $X_n(i)$'s come from sampling balls without replacement; as we already mentioned, by~\cite[Proposition~20.6]{Ald85}, their sum satisfies any concentration inequality based on controlling the Laplace transform the similar sum when sampling with replacement does. In particular, we may apply~\cite[Theorem~2.7]{McD98}, and get
\begin{align*}
\Pr{\sum_{i=1}^h (X_n(i) - \E[X_n(i)]) \le - \frac{h \sigma_n^2}{2 \edges_n}}
&\le \exp\left(- \frac{(\frac{h \sigma_n^2}{2 \edges_n})^2}{2 h \frac{\Delta_n \sigma_n^2}{\edges_n} + \frac{2}{3} \frac{h \sigma_n^2}{2 \edges_n} \frac{\sigma_n^2}{\edges_n}}\right)
\\
&= \exp\left(- \frac{h \sigma_n^2}{8 \edges_n \Delta_n + \frac{4}{3} \sigma_n^2}\right).
\end{align*}
Observe that $\sigma_n^2 \le \edges_n \Delta_n$ so $8 \edges_n \Delta_n + \frac{4}{3} \sigma_n^2 \le 10 \edges_n \Delta_n$, hence
\begin{align*}
\Pr{|x_n| \ge z \frac{\edges_n}{\sigma_n} \text{ and } \LR(x_n) \le \frac{z}{2} \sigma_n}
&\le \frac{z \sigma_n + 2}{2 \edges_n} \sum_{h \ge z \edges_n / \sigma_n} \exp\left(- \frac{h \sigma_n^2}{10 \edges_n \Delta_n}\right)
\\
&\le \frac{z \sigma_n + 2}{2 \edges_n} 
\frac{\exp\left(- \frac{z \sigma_n}{10 \Delta_n}\right)}{1 - \exp\left(- \frac{\sigma_n^2}{10 \edges_n \Delta_n}\right)}.
\end{align*}
We next appeal to the following two bounds: first $(1-\e^{-t}) \ge t(1 - t/2) \ge 19t/20$ for every $0 \le t \le 1/10$, second $t \e^{-t} \le \e^{-t/2}$ for all $t \ge 0$. We thus have
\begin{align*}
\Pr{|x_n| \ge z \text{ and } \LR(x_n) \le \frac{z}{2} \sigma_n}
&\le \frac{z \sigma_n + 2}{2 \edges_n}
\frac{10 \Delta_n}{z \sigma_n} \exp\left(- \frac{z \sigma_n}{20 \Delta_n}\right)
\frac{200 \edges_n \Delta_n}{19 \sigma_n^2}
\\
&\le \frac{1000 \Delta_n^2}{19 \sigma_n^2} \left(1 + \frac{2}{z \sigma_n}\right)
\exp\left(- \frac{z \sigma_n}{20 \Delta_n}\right).
\end{align*}
Recall that we assume that $\Delta_n \ge 2$, which implies $\Delta_n^2 \le 2 \sigma_n^2$ and also $\sigma_n \ge 1$. We thus obtain for every $z \ge 1$,
\[\Pr{|x_n| \ge z \text{ and } \LR(x_n) \le \frac{z}{2} \sigma_n}
\le \frac{2000 \times 3}{19} \exp\left(- \frac{z}{20 \sqrt{2}}\right).\]
Jointly with the exponential bound from Proposition~\ref{prop:queues_exp_Luka}, this completes the proof.
\end{proof}

\begin{rem}\label{rem:queues_exp_hauteur_typique_foret}
In the more general case of forests, the same argument applies and the only difference lies in~\eqref{eq:epine_un_sommet} where in the numerator of the ratio in front of the probability, the term `$+1$' is more generally `$+\varrho_n$'. 
Then in the last display of the preceding proof, the term `$3$' is replaced by `$1+\varrho_n/\sigma_n$'. If this ratio is bounded, as in Theorem~\ref{thm:marginales_hauteur}, then the result of Proposition~\ref{prop:queues_exp_hauteur_typique_arbre} stills holds, except that $c_1$ now depends on the sequences $(\varrho_n)_{n \ge 1}$ and $(d_n)_{n \ge 1}$, but it can be chosen independently of $n$ and $z \ge 1$.
\end{rem}

\begin{rem}\label{rem:k_ary}
Recall the particular case of uniformly random $k_{n}$-ary trees discussed in the introduction, in which every internal vertex has exactly $k_{n}$ offspring, so $\edges_{n} = n k_{n}$ and $\sigma_{n}^{2} = n k_{n} (k_{n}-1)$. In this case, Proposition~\ref{prop:queues_exp_hauteur_typique_arbre} extends to the maximal height of the tree, without appealing to Lemma~\ref{lem:multi_epines_sans_remise} since in this case, if a vertex $x$ lies at generation $z \edges_{n}/\sigma_{n}$, say, then $\LR(x) = z (k_{n}-1) \edges_{n}/\sigma_{n} = z \sigma_{n}$. Then, by~\eqref{eq:queue_min_pont} and the Vervaat transform, the probability that there exists such a vertex is bounded by $c_1 \e^{- c_2 z}$, where $c_{1}, c_{2} > 0$ are universal.
With this bound at hand, it is not difficult then to extend the convergence of the reduced trees provided by Theorem~\ref{thm:convergence_arbre_reduit} to the convergence~\eqref{eq:arbres_k_aires} by proving Equation~25 in~\cite{Ald93}. Precisely, one can show that for every $\varepsilon>0$, one can fix $q$ large enough so that with high probability as $n\to\infty$, each tree in the forest consisting of the complement in $\fn$ of the ancestors of the $q$ random vertices has height smaller $\varepsilon \edges_n/\sigma_n$.
We hope to obtain a more general result in the future and therefore refrain to provide the details here in this case.
\end{rem}

\subsection{Convergence of reduced forests}
\label{sec:marginales_hauteur}

Let us close this section with the proof of Theorem~\ref{thm:marginales_hauteur}.
Recall that for $\varrho \in [0,\infty)$, we denote by $X^\varrho$ the first-passage Brownian bridge from $0$ to $-\varrho$ with duration $1$ and that we denote by $\tX_t = X^\varrho_t - \min_{0 \le s \le t} X^\varrho_s$ for every $t \in [0,1]$.
Let us assume that $\lim_{n \to \infty} \sigma_n^{-1} \varrho_n = \varrho$ and that $\lim_{n \to \infty} \sigma_n^{-1} \Delta_n = 0$. We have shown with Proposition~\ref{prop:convergence_Luka} the convergence
\[\left(\frac{1}{\sigma_n} \Wfn(\vertices_n t) ; t \in [0,1]\right)
\cvloi
\left(X^\varrho(t) ; t \in [0,1]\right).\]
We aim at showing that, jointly with this convergence, if for $q \ge 1$, we sample $U_1, \dots, U_q$ i.i.d. uniform random variables in $[0,1]$ independently of the rest, and if we denote by $0 = U_{(0)} < U_{(1)} < \dots < U_{(q)}$ their ordered statistics, then we have
\[\frac{\sigma_n}{2 \edges_n} \left(\Hfn(\vertices_n U_{(i)}), \inf_{U_{(i-1)} \le t \le U_{(i)}} \Hfn(\vertices_n t)\right)_{1 \le i \le q}
\cvloi
\left(\tX_{U_{(i)}}, \inf_{U_{(i-1)} \le t \le U_{(i)}} \tX_t\right)_{1 \le i \le q}.\]
The proof is inspired from the work of Broutin \& Marckert~\cite{BM14} which itself finds its root in the work of Marckert \& Mokkadem~\cite{MM03}; the ground idea is to compare the process $\Hfn$ which describes the height of the vertices with $\Wfn$ which counts the number of vertices branching off to the right of their ancestral lines. As opposed to these works, these two processes have different scaling here so one has to be more careful.

\begin{proof}[Proof of Theorem~\ref{thm:marginales_hauteur}]
Let $U$ have the uniform distribution on $[0,1]$ independently of the forest and let $x_n$ be the $\lfloor \vertices_n U\rfloor$'th vertex of $\fn$ in lexicographical order, so it has the uniform distribution in $\fn$. Then $\Hfn(\lfloor \vertices_n U\rfloor) = |x_n|$ denotes its generation in its tree, whereas $\RR(x_n)$ as defined in~\eqref{eq:def_X_discret} is the number of individuals branching off strictly to the right of its ancestral line in its tree. According to Proposition~\ref{prop:convergence_Luka} and~\eqref{eq:def_X_discret}, the process $\sigma_n^{-1} \RR(\lfloor \vertices_n \cdot\rfloor)$ converges in distribution towards $\tX$; we claim that
\begin{equation}\label{eq:ecart_longueur_LR}
\left|\frac{1}{\sigma_n} \RR(x_n) - \frac{\sigma_n}{2 \edges_n} |x_n| \right| \cvproba 0.
\end{equation}
Recall the notation $\LR(x_n) = \LL(x_n) + \RR(x_n)$ for the total number of individuals branching off the ancestral line of $x_n$ and recall that, by symmetry, $\LL(x_n)$ and $\RR(X_n)$ have the same law (but they are not independent in general). 
Fix $\varepsilon, \eta > 0$. Let $K > 0$ be such that
\[\Pr{\LR(x_n) \le K \sigma_n \text{ and } |x_n| \le K \edges_n / \sigma_n} \ge 1-\eta\]
for every $n$ large enough. This is ensured e.g. by Proposition~\ref{prop:queues_exp_Luka} and Remark~\ref{rem:queues_exp_hauteur_typique_foret}. Then 
the probability that $|\frac{1}{\sigma_n}\RR(x_n) - \frac{\sigma_n}{2 \edges_n}|x_n|| > \varepsilon$ is bounded above by
\[\eta + \Pr{\left|\RR(x_n) - \frac{\sigma_n^2}{2 \edges_n}|x_n|\right| > \varepsilon \sigma_n, 
\LR(x_n) \le K \sigma_n,
\text{ and } |x_n| \le K \edges_n / \sigma_n}.\]

We next proceed similarly to the previous proof, appealing to the spinal decomposition obtained in Lemma~\ref{lem:multi_epines_sans_remise} with $q=1$. Recall the notation from this lemma;
consider the event that $|x_n| = h$ and that for all $0 \le i < h$, the ancestor of $x_n$ at generation $i$ has $k_i$ offspring and the $j_i$'th one is the ancestor of $x_n$ at generation $i+1$; according to~\eqref{eq:epine_un_sommet} its probability is bounded by
\[\frac{\varrho_n + \sum_{i=1}^h (k_i - 1)}{\vertices_n}
\cdot \Pr{\bigcap_{i \le h} \left\{(\xi_{d_n}(i), \chi_{d_n}(i)) = (k_i, j_i)\right\}}.\]
By decomposing according to the height of $x_n$ and taking the worst case, we 
see that the probability that $|\frac{1}{\sigma_n}\RR(x_n) - \frac{\sigma_n}{2 \edges_n}|x_n|| > \varepsilon$ is bounded above by
\[\eta + 
\frac{K \edges_n}{\sigma_n} \frac{\varrho_n + K \sigma_n}{\vertices_n} \sup_{h \le K \edges_n/\sigma_n} 
\Pr{\left|\sum_{i \le h} \left(\xi_{d_n}(i) - \chi_{d_n}(i)\right) - \frac{\sigma_n^2}{2 \edges_n} h\right| > \varepsilon \sigma_n}.\]
From our assumption, $\frac{K \edges_n}{\sigma_n} \frac{\varrho_n + K \sigma_n}{\vertices_n}$ converges to $K (\varrho + K)$. From the triangle inequality, the last probability is bounded above by
the sum of
\[\Pr{\left|\sum_{i \le h} \left(\left(\xi_{d_n}(i) - \chi_{d_n}(i)\right) - \frac{\xi_{d_n}(i) - 1}{2}\right)\right| > \frac{\varepsilon \sigma_n}{2}}\]
and
\[\Pr{\left|\sum_{i \le h} \frac{\xi_{d_n}(i) - 1}{2} - \frac{\sigma_n^2}{2 \edges_n} h\right| > \frac{\varepsilon \sigma_n}{2}}.\]
Recall that the $\xi_{d_n}(i)$'s are identically distributed, with mean and variance given in~\eqref{eq:moyenne_biais_par_la_taille}. Furthermore, as discussed in the end of Section~\ref{sec:epine} these random variables are obtained by successive picks without replacement in an urn, and therefore are negatively correlated. In particular, the variance of their sum is bounded by the sum of their variances, see e.g.~\cite[Proposition~20.6]{Ald85}. 
The Markov inequality then yields for every $h \le K \edges_n / \sigma_n$
\[\Pr{\left|\sum_{i \le h} \frac{\xi_{d_n}(i) - 1}{2} - \frac{\sigma_n^2}{2 \edges_n} h\right| > \frac{\varepsilon \sigma_n}{2}}
\le \frac{h}{\varepsilon^2 \sigma_n^2} \Var\left(\xi_{d_n}(1)-1\right)
\le \frac{K \Delta_n}{\varepsilon^2 \sigma_n},\]
which converges to $0$. Moreover, conditionally on the $\xi_{d_n}(i)$'s, the random variables $\xi_{d_n}(i) - \chi_{d_n}(i)$ are independent and uniformly distributed on $\{0, \dots, \xi_{d_n}(i)-1\}$, with mean $(\xi_{d_n}(i) - 1)/2$ and variance $(\xi_{d_n}(i)^2 - 1)/12$. Similarly,
the Markov inequality applied conditionally on the $\xi_{d_n}(i)$'s yields for every $h \le K \edges_n / \sigma_n$:
\begin{align*}
&\Pr{\left|\sum_{i \le h} \left(\left(\xi_{d_n}(i) - \chi_{d_n}(i)\right) - \frac{\xi_{d_n}(i) - 1}{2}\right)\right| > \frac{\varepsilon \sigma_n}{2}}
\\
&\qquad\le \frac{4}{\varepsilon^2 \sigma_n^2} \cdot \Es{\sum_{i \le h} \frac{\xi_{d_n}(i)^2 - 1}{12}}
\\
&\qquad\le \frac{\Delta_n+1}{3 \varepsilon^2 \sigma_n^2} \cdot h \cdot \Es{\xi_{d_n}(1) - 1}
\\
&\qquad\le K \frac{\Delta_n+1}{3 \varepsilon^2 \sigma_n},
\end{align*}
which also converges to $0$. This completes the proof of~\eqref{eq:ecart_longueur_LR}, which, combined with Proposition~\ref{prop:convergence_Luka} and~\eqref{eq:def_X_discret}, yields the convergence
\[\frac{\sigma_n}{2 \edges_n} \left(\Hfn(\vertices_n U_{(i)})\right)_{0 \le i \le q}
\cvloi
\left(\tX_{U_{(i)}}\right)_{0 \le i \le q}\]
for any $q \ge 1$ fixed.

In order to obtain the full statement of the proposition, we need to prove the following: Assume that the forest $\fn$ reduced to the ancestors of $q$ i.d.d. vertices has $q$ leaves (this occurs with high probability since the height of such a vertex is at most of order $\edges_n / \sigma_n = o(\edges_n)$ as we have seen) and a random number, say, $b \in \{1, \dots, q-1\}$ of branchpoints; then remove these $b$ branchpoints from the reduced forest to obtain a collection of $q+b$ single branches. Then we claim that uniformly for $1 \le i \le q+b$, the length of the $i$'th branch multiplied by $\frac{\sigma_n}{2 \edges_n}$ is close to $\sigma_n^{-1}$ times the number of vertices branching off strictly to the right of this path in the original forest. Since this number is encoded by the {\L}ukasiewicz path, the proposition then follows from Proposition~\ref{prop:convergence_Luka}. Such a comparison follows similarly as in the case $q=1$ above from the spinal decomposition of Lemma~\ref{lem:multi_epines_sans_remise}: now we have to consider not only the length $h$ of a single branch, but those $h_1, \dots, h_{q+b}$ of all the branches, which is compensated by the factor $(\sigma_n / \vertices_n)^{q+b}$ in this lemma, the other terms before the probability are bounded. We leave the details to the reader.
\end{proof}

\section{On the label process}
\label{sec:etiquettes}

The aim of this section is to prove Theorems~\ref{thm:tension_etiquettes} and~\ref{thm:convergence_etiquettes} on the label process $\Lfn$ of a labelled forest $(\fn, \ell)$ sampled uniformly at random in $\LFn$. Let us first prove Theorem~\ref{thm:tension_etiquettes} which asserts that the sequence
\[\left((\sigma_n + \varrho_n)^{-1/2} \Lfn(\vertices_n t) ; t \in [0,1]\right)_{n \ge 1}\]
is tight in the space $\mathscr{C}([0,1], \R)$. We shall rely on Proposition~\ref{prop:Holder_marche_Luka} and adapt the argument from~\cite{Mar18b}.
Then in Section~\ref{sec:serpent_brownien} we prove that in the case $\varrho_n \gg \sigma_n$ of a large number of trees, this process converges in distribution towards a Brownian bridge as stated in Theorem~\ref{thm:convergence_etiquettes}~\eqref{thm:convergence_etiquettes_arbre} by identifying the limit of the finite-dimensional marginals. On the other hand in the case of no macroscopic degree, where both $\varrho_n \sim \varrho \sigma_n$ and $\Delta_n \ll \sigma_n$, we deduce Theorem~\ref{thm:convergence_etiquettes}~\eqref{thm:convergence_etiquettes_carte_disque} from the convergence of the finite-dimensional \emph{random} marginals which is itself proved in Section~\ref{sec:marginales_labels}, appealing to Theorem~\ref{thm:marginales_hauteur}.
Throughout this section we shall make an extensive use of Lemma~\ref{lem:multi_epines_sans_remise}.

\subsection{Tightness of the label process}
\label{sec:tension_etiquettes}

Theorem~\ref{thm:tension_etiquettes} extends~\cite[Proposition~7]{Mar18b} which is restricted to the case of a single tree and in a `finite-variance regime', when $\sigma_n^2$ is of order $\edges_n$. Many arguments generalise here so shall only briefly recall them and focus on the main difference.
First, we shall need a technical result which resembles~\cite[Corollary~3]{Mar18b}, but a slight adaptation is needed here.

Recall the notation $\chi_z \in \{1, \dots, k_{pr(z)}\}$ for the relative position of a vertex $z \in \fn$ amongst its siblings as well as the interval notation $\mathopen{\rrbracket} x, y\rrbracket$ for the simple path going from the vertex $x$ (excluded) to the vertex $y$.

\begin{lem}\label{lem:bon_evenement_tension_labels}
Consider the following event which we denote by $\mathcal{E}_n$: for every pair of vertices $\hat{x}$ and $x$ such that $\hat{x}$ is an ancestor of $x$ and $\#\{z \in \mathopen{\rrbracket} \hat{x}, x\rrbracket : k_{pr(z)} \ge 2\} > 2\cdot16^{2} \ln(4 \vertices_{n})$, it holds that
\[\frac{\#\{z \in \mathopen{\rrbracket} \hat{x}, x\rrbracket : \chi_z = 1 \text{ and } k_{pr(z)} \ge 2\}}{\#\{z \in \mathopen{\rrbracket} \hat{x}, x\rrbracket : k_{pr(z)} \ge 2\}}
\le \frac{3}{4}.\]
Then $\P(\mathcal{E}_n^c) \le 2 (\vertices_n-d_{n}(1))^{- 2}$.
\end{lem}

In words, if the number of vertices with degree different from $1$ tends to infinity, then 
with high probability, there is no branch along which, amongst the individuals which are not single child, as soon as this quantity is at least logarithmic in $\vertices_{n}$, the proportion of individuals which are the left-most (or right-most by symmetry) child of their parent is larger than $3/4$. This will be needed to use a symmetry argument in the proof of Theorem~\ref{thm:tension_etiquettes}; note that we exclude the individuals with only one child since the label increment in this case is null.

\begin{proof}
Let us first explain the values appearing in the lemma. 
Since the statement completely ignores the individuals with only one child, then we may replace the degree sequence $d_n$ by $d_n'(k) = d_n(k) \ind{k\ne1}$ and couple the two labelled forests with degree sequence $d_n$ and $d_n'$, by removing all the vertices with outdegree $1$ in $\fn$. Let us denote by $\vertices_{n}' = \vertices_{n}-d_{n}(1)$, $\edges_{n}' = \edges_{n}-d_{n}(1)$, and $d_{n}'(0) = d_{n}(0)$ the number of vertices, edges, and leaves respectively of the new forest ${\fn}\vphantom{T}'$.
Let us set
\[g_n = \frac{d_{n}'(0)}{8 \vertices_{n}'},
\qquad
c_n = 2g_n + \frac{\vertices_n'-d_{n}'(0)}{\edges_n'},
\qquad
l_n 
= g_n^{-2} \ln(g_n^{-1} (\vertices_n')^2).\]
Then $g_{n} \le 1/8$ and since each inner vertex has degree at least $2$, then $\vertices_n'-d_{n}'(0) \le \edges_n'/2$, and thus $c_{n} \le 3/4$. On the other hand we also have $g_{n} \ge 1/16$ so $l_{n} \le 16^{2} \ln(16 (\vertices_n')^2) \le 2\cdot16^{2} \ln(4 \vertices_{n})$.
We shall prove that the probability to find a pair $(\hat{x}, x)$ such that $\hat{x}$ is an ancestor of $x$ and $\#\{z \in \mathopen{\rrbracket} \hat{x}, x\rrbracket : k_{pr(z)} \ge 2\} > l_{n}$ and
\[\frac{\#\{z \in \mathopen{\rrbracket} \hat{x}, x\rrbracket : \chi_z = 1 \text{ and } k_{pr(z)} \ge 2\}}{\#\{z \in \mathopen{\rrbracket} \hat{x}, x\rrbracket : k_{pr(z)} \ge 2\}}
> c_{n}\]
is smaller than or equal to $2 (\vertices_n')^{- 2}$. Let $\mathcal{E}_{n}' \supset \mathcal{E}_{n}^{c}$ denote this event, this implies our claim.

For a vertex $x$ in the forest ${\fn}\vphantom{T}'$ and $1 \le i \le |x|$, let us denote by $\alpha_i(x)$ the ancestor of $x$ at height $|x|-i+1$, so $\alpha_1(x)$ is $x$ itself, $\alpha_2(x)$ is its parent, etc. We may then rewrite the event $\mathcal{E}_{n}'$ as
\[\mathcal{E}_n'
= \bigcup_{x \in {\fn}\vphantom{T}'} \bigcup_{l_n \le l \le |x|} \bigg\{\sum_{i=1}^l \ind{\chi_{\alpha_i(x)} = 1} > c_n l\bigg\}.\]
Let $x_n$ be a vertex sampled uniformly at random in ${\fn}\vphantom{T}'$; a union bound yields
\[\Pr{\mathcal{E}_n'}
\le \vertices_n' \cdot \Pr{\bigcup_{l = l_n}^{|x_n|} \left\{\sum_{i=1}^l \ind{\chi_{\alpha_i(x_n)} = 1} > c_n l\right\}}.\]
Appealing to Lemma~\ref{lem:multi_epines_sans_remise}, since the ratio $(\varrho_n + \sum_{i=1}^h (k_i - 1))/\vertices_n'$ in~\eqref{eq:epine_un_sommet} is bounded by $1$, then the previous right-hand side is bounded above by
\[\vertices_n' \sum_{h \le \vertices_n'} \sum_{l = l_n}^h \Pr{\sum_{i=1}^l \ind{\chi_{d_n'}(i) = 1} > c_n l}.\]
Recall that $c_n = 2g_n + \frac{\vertices_n'-d_{n}'(0)}{\edges_n'}$, then a union bound yields
\begin{align*}
&\Pr{\sum_{i=1}^l \ind{\chi_{d_n'}(i) = 1} > c_n l}
\\
&= \Pr{\sum_{i=1}^l \ind{\chi_{d_n'}(i) = 1} - l \frac{\vertices_n'-d_{n}'(0)}{\edges_n'} > 2g_n l}
\\
&\le \Pr{\sum_{i=1}^l \ind{\chi_{d_n'}(i) = 1} - \sum_{i=1}^l \frac{1}{\xi_{d_n'}(i)} > g_nl} 
+ \Pr{\sum_{i=1}^l \frac{1}{\xi_{d_n'}(i)} - l \frac{\vertices_n'-d_{n}'(0)}{\edges_n'} > g_nl}.
\end{align*}
Note that each $\xi_{d_n'}(i)^{-1}$ takes values in $[0,1]$ and has mean
\[\Es{\xi_{d_n'}(i)^{-1}} = \sum_{k \ge 1} k^{-1} \frac{k d_n'(k)}{\edges_{n}'} = \frac{\vertices_{n}'-d_{n}'(0)}{\edges_{n}'}.\]
If these variables were obtained by sampling \emph{with} replacement, then we could apply a well known concentration result, see e.g.~\cite[Theorem~2.3]{McD98} to obtain
\[\Pr{\sum_{i=1}^l \xi_{d_n'}(i)^{-1} - l \frac{\vertices_{n}'-d_{n}'(0)}{\edges_{n}'} > g_nl}
\le \exp\left(- 2 g_n^2 l\right).\]
As already noted, this remains true here since this concentration is obtained by controlling the Laplace transform of $\sum_{i=1}^l \xi_{d_n'}(i)^{-1}$ and the expectation of any convex function of this sum is bounded above by the corresponding expectation when sampling with replacement, see e.g.~\cite[Proposition~20.6]{Ald85}.
Further, conditionally on the $\xi_{d_n'}(i)$'s, the variables $\ind{\chi_{d_n'}(i) = 1}$ are independent and Bernoulli distributed, with parameter $\xi_{d_n'}(i)^{-1}$ respectively, so we have similarly
\[\Pr{\sum_{i=1}^l \ind{\chi_{d_n'}(i) = 1} - \sum_{i=1}^l \xi_{d_n'}(i)^{-1} > g_nl}
\le \exp\left(- 2 g_n^2 l\right).\]
Since $g_n \in (0,1/2)$, then it holds that $1 - \exp(- 2 g_n^2) \ge g_n^2$, so we obtain
\begin{align*}
\Pr{\mathcal{E}_n'}
&\le \vertices_n \sum_{h \le \vertices_n'} \sum_{l = l_n}^h 2 \exp\left(- 2 g_n^2 l\right)
\\
&\le 2 (\vertices_n')^2 \frac{\exp(- 2 g_n^2 l_n)}{1 - \exp(- 2 g_n^2)}
\\
&\le 2 (\vertices_n')^2 g_n^{-2} \exp(- 2 g_n^2 l_n).
\end{align*}
Finally, we have chosen $l_n 
= g_n^{-2} \ln(g_n^{-1} (\vertices_n')^2)$,
so $\P(\mathcal{E}_n') 
\le 2 (\vertices_n')^{- 2}$.
\end{proof}

We may now prove the tightness of the label process, relying on this result and Proposition~\ref{prop:Holder_marche_Luka}.

\begin{proof}[Proof of Theorem~\ref{thm:tension_etiquettes}]
Suppose first that the number $\vertices_n-d_{n}(1)$ of vertices with outdegree different from $1$ does not tend to infinity; upon extracting a subsequence, assume that it is uniformly bounded. Then the maximal degree $\Delta_{n}$, and so the maximal label increment along an edge, is bounded (since the number of leaves is), and there are a bounded number of edges along which the label increment is not zero. In this case, if furthermore the number $\varrho_{n}$ of trees is also bounded, then it is clear that the label process is tight, and the scaling factor is of constant order.
On the other hand, if $\vertices_n-d_{n}(1)$ is bounded but $\varrho_{n}$ tends to infinity, then the largest gap between labels in the same tree is uniformly bounded, so it tends to $0$ after the scaling, only the labels of the roots of the trees can give a non zero contribution; the latter precisely converge after scaling to a Brownian bridge as discussed more precisely in the proof of Theorem~\ref{thm:convergence_etiquettes}~\eqref{thm:convergence_etiquettes_arbre} below.

We henceforth assume that $\vertices_n-d_{n}(1) \to \infty$ as $n \to \infty$. 
Fix $\varepsilon > 0$ arbitrarily small, let $C$ be as in Proposition~\ref{prop:Holder_marche_Luka} and let $\mathcal{E}_n$ be the intersection of the event in this proposition and that of Lemma~\ref{lem:bon_evenement_tension_labels}, whose probability is therefore at least $1-2\varepsilon$ for every $n$ large enough. 
We claim that for every $q > 4$, for every $\beta \in (0, q/4-1)$, there exists a constant $C> 0$, which depends on $q$, $\beta$ and possibly $(d_n)_{n \ge 0}$, such that for every $n$ large enough, for every $0 \le s \le t \le 1$, it holds that
\begin{equation}\label{eq:moments_label_Kolmogorov}
\Esc{\left|\Lfn(\vertices_n s) - \Lfn(\vertices_n t)\right|^q}{\mathcal{E}_n}
\le C \cdot (\sigma_n + \varrho_n)^{q/2} \cdot |t - s|^{1+\beta}.
\end{equation}
The standard Kolmogorov criterion then implies that for every $\gamma \in (0, 1/4)$, for another constant $K>0$,
\[\lim_{K \to \infty} \limsup_{n \to \infty} \Prc{\sup_{s \ne t} \frac{|\Lfn(\vertices_n s) - \Lfn(\vertices_n t)|}{(\sigma_n + \varrho_n)^{1/2} \cdot |t - s|^\gamma} > K}{\mathcal{E}_n} = 0.\]
Then the same holds for the unconditioned probability and this implies the tightness as claimed.

Exactly as in the proof of Proposition~\ref{prop:Holder_marche_Luka}, it suffices to prove~\eqref{eq:moments_label_Kolmogorov} for the `one-reduced' forest ${\fn}\vphantom{T}'$ with degree sequence $d_n'(k) = d_n(k) \ind{k\ne1}$ obtained by removing all the vertices with outdegree $1$ in $\fn$. Indeed, as there, these vertices induce a null label increment and they are located uniformly at random in the forest; therefore, with the notation of the proof of Proposition~\ref{prop:Holder_marche_Luka}, the suitable rescaled time intervals $[s,t]$ and $[s_{n}, t_{n}]$ corresponding to the same vertices respectively in $\fn$ and in ${\fn}\vphantom{T}'$ are of the same order in the sense that $\E[|t_{n}-s_{n}|^{1+\beta}] \le c\cdot |t-s|^{1+\beta}$ for some constant $c$ independent of $s$ and $t$; this allows to deduce the bound~\eqref{eq:moments_label_Kolmogorov} from ${\fn}\vphantom{T}'$ to $\fn$ (with another constant $C$). Note that the event $\mathcal{E}_{n}$ is not affected by the reduction.

Let us summarise: we assume henceforth that the degree sequence is such that $d_{n}(1) = 0$ and $\vertices_{n} \to \infty$ as $n\to \infty$, and our aim is to prove~\eqref{eq:moments_label_Kolmogorov}. The rest of the proof follows closely that of~\cite[Proposition~7]{Mar18b}.
We may, and do, suppose that $\vertices_n s$ and $\vertices_n t$ are integers.
Let us view $\fn$ as a tree by attaching all the roots to an extra root vertex, let $x$ and $y$ be the vertices visited at time $\vertices_n s$ and $\vertices_n t$ respectively, and let $\hat{x}$ and $\hat{y}$ be the children of their last common ancestor which are ancestor of $x$ and $y$ respectively. Note that if $x$ and $y$ belong to different trees of $\fn$, then $\hat{x}$ and $\hat{y}$ are the roots of these two trees and then the term $\chi_{\hat{y}} - \chi_{\hat{x}}$ below counts the number of other roots strictly between them, plus one. 
Let us decompose the label increment $\Lfn(\vertices_n s) - \Lfn(\vertices_n t) = \ell(x)-\ell(y)$ as follows:
\[\ell(x)-\ell(y)
= \sum_{z \in \mathopen{\rrbracket} \hat{x}, x \mathclose{\rrbracket}} (\ell(z)-\ell(pr(z)))
+ (\ell(\hat{y})-\ell(\hat{y}))
+ \sum_{z \in \mathopen{\rrbracket} \hat{y}, y \mathclose{\rrbracket}} (\ell(z)-\ell(pr(z))).\]
Then it was argued in~\cite{Mar18b}, see Equation~22 and the next few lines there, that $\E[|\ell(x)-\ell(y)|^q \mid \fn]$ is bounded above by some constant times
\[\bigg(\sum_{z \in \mathopen{\rrbracket} \hat{x}, x \mathclose{\rrbracket}} (k_{pr(z)}-\chi_{z}) + (\chi_{\hat{y}} - \chi_{\hat{x}})\bigg)^{q/2}
+ \bigg(\sum_{z \in \mathopen{\rrbracket} \hat{y}, y \mathclose{\rrbracket}} \chi_{z}\bigg)^{q/2}.\]
Furthermore, from Lemma~\ref{lem:codage_marche_Luka} and then Proposition~\ref{prop:Holder_marche_Luka}, one gets that 
there exists $C > 0$ such that, for every $n \ge 1$, with probability at least $1-\varepsilon$, it holds
\begin{align*}
\sum_{z \in \mathopen{\rrbracket} \hat{x}, x \mathclose{\rrbracket}} (k_{pr(z)}-\chi_{z}) + (\chi_{\hat{y}} - \chi_{\hat{x}})
&= \Wfn(\vertices_n s) - \inf_{r \in [s,t]} \Wfn(\vertices_n r)
\\
&\le C \cdot (\sigma_n + \varrho_n) \cdot |t-s|^{(1-\varepsilon)/2}.
\end{align*}
uniformly for $0 \le s < t \le 1$.
By choosing $\varepsilon$ small enough, we deduce that
\[\bigg(\sum_{z \in \mathopen{\rrbracket} \hat{x}, x \mathclose{\rrbracket}} (k_{pr(z)}-\chi_{z}) + (\chi_{\hat{y}} - \chi_{\hat{x}})\bigg)^{q/2}
\le C^{q/2} \cdot (\sigma_n + \varrho_n)^{q/2} \cdot |t-s|^{1+\beta},\]
uniformly for $0 \le s < t \le 1$ with probability $1-\varepsilon$.

We next want a similar bound for the moments of the other term $\sum_{z \in \mathopen{\rrbracket} \hat{y}, y \mathclose{\rrbracket}} \chi_{z}$. We would like to proceed symmetrically, using the `mirror forest' obtained by flipping the order of the children of every vertex, but there is a difference with the first term: the quantity $\sum_{z \in \mathopen{\rrbracket} \hat{x}, x \mathclose{\rrbracket}} (k_{pr(z)}-\chi_{z})$ counts the individuals which lie strictly to the right of the branch $\mathopen{\rrbracket} \hat{x}, x \mathclose{\rrbracket}$ in the sense that their parent belongs to $\mathopen{\llbracket} \hat{x}, x \mathclose{\llbracket}$ but not themselves, and they lie after this branch in the depth-first search order; on the other hand, the term $\sum_{z \in \mathopen{\rrbracket} \hat{y}, y \mathclose{\rrbracket}} \chi_{z}$ counts similarly the number, say $\LL(\mathopen{\rrbracket} \hat{y}, y \mathclose{\rrbracket})$ of individuals which lie strictly to the left of the branch $\mathopen{\rrbracket} \hat{y}, y \mathclose{\rrbracket}$ \emph{plus} the length $\# \mathopen{\rrbracket} \hat{y}, y \mathclose{\rrbracket}$ of this branch. In other words, whilst individuals which are the right-most child of their parent are not counted in the first term (since $\ell(zk_{z}) = \ell(z)$), those which are the left-most child of their parent \emph{are} counted in the second term (since $\ell(z1) \ne \ell(z)$ in general); Lemma~\ref{lem:bon_evenement_tension_labels} allows us to control this number as follows.
Recall that we assume $d_{n}(1)=0$. Under the event $\mathcal{E}_n$, if the branch $\mathopen{\rrbracket} \hat{y}, y \mathclose{\rrbracket}$ has length greater than $l_n = 2\cdot16^{2} \ln(4 \vertices_{n})$, then the proportion of individuals which are the first child of their parent is at most $3/4$; all other vertices (a proportion at least $1/4$) contribute to $\LL(\mathopen{\rrbracket} \hat{y}, y \mathclose{\rrbracket})$ so there are at most $\LL(\mathopen{\rrbracket} \hat{y}, y \mathclose{\rrbracket})$ of them and therefore
on the event $\mathcal{E}_n$, it holds that
\begin{align*}
\# \mathopen{\rrbracket} \hat{y}, y \mathclose{\rrbracket} + \LL(\mathopen{\rrbracket} \hat{y}, y \mathclose{\rrbracket})
&\le 4 \LL(\mathopen{\rrbracket} \hat{y}, y \mathclose{\rrbracket}) \ind{\# \mathopen{\rrbracket} \hat{y}, y \mathclose{\rrbracket} \ge l_n}
+ \left(l_n + \LL(\mathopen{\rrbracket} \hat{y}, y \mathclose{\rrbracket})\right) \ind{\# \mathopen{\rrbracket} \hat{y}, y \mathclose{\rrbracket} < l_n}
\\
&\le l_n + 5 \LL(\mathopen{\rrbracket} \hat{y}, y \mathclose{\rrbracket}).
\end{align*}
Now, using the mirror forest, the previous bound on the left branch yields
\[\E[\LL(\mathopen{\rrbracket} \hat{y}, y \mathclose{\rrbracket})^{q/2}] \le C (\sigma_n + \varrho_n)^{q/2} |t-s|^{1+\beta}\]
for some $C>0$ and so, for some other constant $C > 0$,
\begin{align*}
\Es{\bigg(\sum_{z \in \mathopen{\rrbracket} \hat{y}, y \mathclose{\rrbracket}} \chi_{z}\bigg)^{q/2} \1_{\mathcal{E}_n}}
&= \Es{\big(\# \mathopen{\rrbracket} \hat{y}, y \mathclose{\rrbracket} + \LL(\mathopen{\rrbracket} \hat{y}, y \mathclose{\rrbracket})\big)^{q/2} \1_{\mathcal{E}_n}}
\\
&\le C \left(l_n^{q/2} + (\sigma_n + \varrho_n)^{q/2} |t-s|^{1+\beta}\right).
\end{align*}
Recall that we assume $d_{n}(1) = 0$, so $\sigma_{n}^{2} + \varrho_{n} \ge \edges_{n}+ \varrho_{n} = \vertices_{n}$ so $\vertices_n^{-1/2} (\sigma_n + \varrho_n)$ is bounded away from $0$. Finally, recall also that we have assumed $\vertices_n s$ and $\vertices_n t$ to be integers so $|t-s| \ge \vertices_n^{-1}$, and thus, for $\beta < q/4-1$, we have $(\sigma_n + \varrho_n)^{q/2} |t - s|^{1+\beta} \ge (\sigma_n + \varrho_n)^{q/2} \vertices_n^{-(1+\beta)}$ which is bounded below by some constant times a positive power of $\vertices_n$, so, for $n$ large enough (but independently of $s$ and $t$), it is larger than $l_n$ which is of order $\ln \vertices_n$. It follows that for another constant $C > 0$,
\[\Es{\bigg(\sum_{z \in \mathopen{\rrbracket} \hat{y}, y \mathclose{\rrbracket}} \chi_{z}\bigg)^{q/2} \1_{\mathcal{E}_n}}
\le C \cdot (\sigma_n + \varrho_n)^{q/2} \cdot |t - s|^{1+\beta},\]
which completes the proof of~\eqref{eq:moments_label_Kolmogorov}.
\end{proof}

\subsection{Brownian labelled forests}
\label{sec:serpent_brownien}

Recall from Section~\ref{sec:enonce_tension_convergence_etiquettes}, in particular~\eqref{eq:decomposition_labels}, that the label process $\Lfn$ can be written as
\[\Lfn(k) = \tLfn(k) + \bfn(1-\infWfn(k)),
\qquad (0\le k \le V_n)\]
where, conditionally given $\fn$, the two processes $\tLfn(\cdot)$ and $\bfn(1-\infWfn(\cdot))$ are independent. On the one hand $\tLfn$ is the concatenation of the label process of each tree taken individually, so where all labels have been shifted so that each root has label $0$. On the other hand  $\bfn(1-\infWfn(k))$ gives the value of the label of the root vertex of the tree containing the $k$'th vertex, with $\infWfn(k) = \min_{0 \le i \le k} \Wfn(i)$ and $(\bfn(i))_{1 \le i \le \varrho_n}$ which is independent of $\fn$ and uniformly distributed in $\mathscr{B}_{\varrho_n}^{\ge -1}$.

We claim that if $\lim_{n \to \infty} \sigma_n^{-1} \varrho_n = \infty$, then the convergence in distribution
\[\left((2\varrho_n)^{-1/2} \left(\bfn(\varrho_n t), \tLfn(\vertices_n t)\right); t \in [0,1]\right) \cvloi ((\bb_t, 0); t \in [0,1]),\]
holds in $\mathscr{C}([0,1],\R^2)$, where $\bb$ is an independent standard Brownian bridge from $0$ to $0$ with duration $1$. This yields Theorem~\ref{thm:convergence_etiquettes}~\eqref{thm:convergence_etiquettes_arbre} as noted just after the statement of this theorem.

\begin{proof}[Proof of Theorem~\ref{thm:convergence_etiquettes}~\eqref{thm:convergence_etiquettes_arbre}]
First, consider the random walk $S$ with step distribution $\P(S_1 = i) = 2^{-(i+2)}$ for all $i \ge -1$, which is centred and with variance $2$. One can check that for any $k \ge 1$, the law of the bridge obtained by conditioning $S$ to satisfy $S_k = 0$ has the uniform distribution in $\mathscr{B}_k^{\ge -1}$. Therefore the process $\bfn$ has the law of such a random walk bridge with length $\varrho_n$ and a conditional version of Donsker's invariance principle, see e.g.~\cite[Lemma~10]{Bet10} for a detailed proof, shows that $(2\varrho_n)^{-1/2} \bfn(\varrho_n \cdot)$ converges in distribution towards $\bb$. 
Recall Proposition~\ref{prop:convergence_Luka}, we conclude that the convergence in distribution 
\[\left((2\varrho_n)^{-1/2} \bfn(1-\infWfn(\lfloor \vertices_n t \rfloor)); t \in [0,1]\right) \cvloi (\bb_t; t \in [0,1])\]
holds in $\mathscr{C}([0,1],\R)$.

We next claim that $\varrho_n^{-1/2} \tLfn$ converges in probability to the null process, which yields our result by the decomposition~\eqref{eq:decomposition_labels}. By this decomposition and the preceding convergence, tightness follows from Theorem~\ref{thm:tension_etiquettes} so it remains to prove the convergence of the (one-dimensional) marginals. 
Fix $k \ge 1$, and  $X_k = (X_{k,1}, \dots, X_{k,k})$ a uniform random bridge in $\mathscr{B}^{\ge -1}_k$, and set $X_{k,0}=0$; using the representation as a bridge of the random walk $S$, one easily infers that for every $q \ge 2$, there exists $C(q) > 0$ such that and every $i,j \in \{0, \dots, k\}$,
\begin{equation}\label{eq:borne_moments_pont_uniforme}
\Es{\left|X_{k,i} - X_{k,j}\right|^q} \le C(q) \cdot |i-j|^{q/2},
\end{equation}
see e.g. Le~Gall \& Miermont~\cite[Lemma 1]{LGM11}.
Fix $t \in [0,1]$; building on this bound, as in the preceding proof, we obtain
\[\Esc{\left|\tLfn(\lfloor\vertices_n t\rfloor)\right|^q}{\fn} \le C(q) \cdot \left|\Wfn(\lfloor\vertices_n t\rfloor) - \infWfn\lfloor\vertices_n t\rfloor\right|^{q/2}.\]
See e.g.~\cite{Mar18b} Equation~22 and the next few lines there, with $u=0$ and $v=\lfloor\vertices_n t\rfloor$; only the case of a single tree is considered there but the argument extends readily. By Proposition~\ref{prop:convergence_Luka}, the right-hand side divided by $\varrho_n^{q/2}$ converges in probability to $0$ which yields the convergence in probability to $0$ of $\varrho_n^{-1/2} \tLfn(\lfloor\vertices_n t\rfloor)$ and the proof is complete.
\end{proof}

Let us next turn to the second part of Theorem~\ref{thm:convergence_etiquettes}. 
Recall from Section~\ref{sec:enonce_tension_convergence_etiquettes} the process $Z^\varrho$ which describes Brownian labels on a forest of Brownian trees coded by the first-passage bridge $X^\varrho$; it takes the form
\[Z^\varrho_t = \tildep{Z}^\varrho_t + \sqrt{3} \cdot \bb^\varrho_{- \underlinep{X}^\varrho_t}
\qquad\text{for every}\qquad 0 \le t \le 1,\]
where $\tildep{Z}^\varrho$ is defined conditionally on $\tX = X^\varrho - \underlinep{X}^\varrho$, where $\underlinep{X}$ is the running infimum of $X^\varrho$, as a centred Gaussian process with covariance 
$\E[\tildep{Z}^\varrho_s \tildep{Z}^\varrho_t \mid \tX] = \min_{r \in [s,t]} \tX_r$ for every $0 \le s \le t \le 1$, and where $\bb^\varrho$ is an independent standard Brownian bridge from $0$ to $0$ with duration $\varrho$.

Then Theorem~\ref{thm:convergence_etiquettes}~\eqref{thm:convergence_etiquettes_carte_disque} claims that if $\lim_{n \to \infty} \sigma_n^{-1} \varrho_n = \varrho$ with $\varrho \in [0,\infty)$ and $\lim_{n \to \infty} \sigma_n^{-1} \Delta_n = 0$ then the convergence in distribution
\[\left(\left(\frac{3}{2\sigma_n}\right)^{1/2} \Lfn(\vertices_n t) ; t \in [0,1]\right) \cvloi (Z^\varrho_t; t \in [0,1])\]
holds in $\mathscr{C}([0,1],\R)$.
Since we already proved tightness of this sequence, it only remains to characterise the subsequential limits.

\begin{prop}\label{prop:marginales_labels}
Suppose that $\lim_{n \to \infty} \sigma_n^{-1} \varrho_n = \varrho$ for some $\varrho \in [0,\infty)$ and $\lim_{n \to \infty} \sigma_n^{-1} \Delta_n = 0$. For every $n,q \ge 1$, sample $(\tn, \ell)$ uniformly at random in $\LFn$ and, independently, sample $U_1, \dots, U_q$ uniformly at random in $[0,1]$. Sample $\tildep{Z}^\varrho$ independently of $(U_1, \dots, U_q)$, then we have
\begin{equation}\label{eq:cv_label_unif_multi}
\left(\frac{3}{2\sigma_n}\right)^{1/2} \left(\tLfn(\vertices_n U_1), \dots, \tLfn(\vertices_n U_q)\right)
\cvloi
\left(\tildep{Z}^\varrho_{U_1}, \dots, \tildep{Z}^\varrho_{U_q}\right)
\end{equation}
jointly with the convergence of $\Wfn$ and $\Hfn$ in Proposition~\ref{prop:convergence_Luka} and Theorem~\ref{thm:marginales_hauteur}.
\end{prop}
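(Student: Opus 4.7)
The plan is to decompose $\tLfn(\upsilon_n U_i)$ along the reduced forest $R_n$ of $\fn$ spanned by the selected vertices $x_n^{(1)}, \ldots, x_n^{(q)}$ and the roots of the trees containing them, and then to apply a conditional central limit theorem branch by branch. Arguing as in the last paragraph of the proof of Proposition~\ref{prop:marginales_hauteur}, the shape of $R_n$ stabilises: with high probability it has $q+b$ branches for some $b \in \{0, \ldots, q-1\}$, whose lengths $h_n^{(1)}, \ldots, h_n^{(q+b)}$ satisfy $(\sigma_n/(2\epsilon_n)) h_n^{(k)} \to \ell^{(k)}$ jointly with the convergences of Propositions~\ref{prop:convergence_Luka} and~\ref{prop:marginales_hauteur}, where $\ell^{(k)}$ is the length of the corresponding branch in the reduced subtree of $\CRT_{X^\varrho}$ spanned by the roots and the points $U_1, \ldots, U_q$. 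The spinal decomposition~\cite[Lemma~3]{Marzouk:On_the_growth_of_random_planar_maps_with_a_prescribed_degree_sequence}, applied simultaneously to all $q$ ancestral lines, furthermore identifies the spinal out-degrees $(\xi_i)$ as an approximately size-biased sample without replacement from the degree sequence, with the next-sibling indices $(\chi_i)$ conditionally uniform on $\{1, \ldots, \xi_i\}$.

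Conditionally on $\fn$, the labels are constructed from independent uniform bridges in $\mathscr{B}_k^{\ge -1}$ at every internal vertex, so the spinal label increments $\Delta_i$ are conditionally independent and the one at depth $i$ is distributed as $S_{\chi_i}$ given $S_{\xi_i} = 0$, with $S$ the random walk introduced in the paper. Since this walk bridge is exchangeable with step variance $2$, a direct calculation (equivalently, the variance of a single coordinate of a uniform composition of $k$ into $k$ non-negative parts is $2(k-1)/(k+1)$) gives $\Esc{S_{\chi_i}^2}{\xi_i, \chi_i} = 2 \chi_i (\xi_i - \chi_i)/(\xi_i + 1)$ and thus $\Esc{\Delta_i^2}{\xi_i} = (\xi_i - 1)/3$ after averaging in $\chi_i$. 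Combined with $\Es{\xi_i - 1} = \sigma_n^2/\epsilon_n$ from~\eqref{eq:moyenne_biais_par_la_taille}, summing along a branch of length $h_n^{(k)}$ yields conditional variance approximately $h_n^{(k)} \sigma_n^2/(3 \epsilon_n) \approx (2\sigma_n/3) \ell^{(k)}$, which is precisely the target variance $\ell^{(k)}$ after multiplication by $3/(2\sigma_n)$. A Lindeberg--Feller CLT conditional on the spine then yields the convergence of $(3/(2\sigma_n))^{1/2}$ times the $k$-th branch increment to a centred Gaussian of variance $\ell^{(k)}$: the Lindeberg condition follows from the uniform bound $|\Delta_i| \le \Delta_n = o(\sigma_n)$, and the variance concentration from the negative correlation of the spinal urn, as in the proof of Proposition~\ref{prop:marginales_hauteur}.

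For the joint limit across the $q+b$ branches, the key point is that the bridges at distinct internal vertices are mutually independent, so branch increments are independent up to a single shared contribution at each of the $b$ inner branch-points coming from a common bridge, whose magnitude is $O(\sqrt{\Delta_n}) = o(\sqrt{\sigma_n})$ and hence vanishes after rescaling. Summing branch contributions along the ancestral path of each $x_n^{(i)}$ in $R_n$ then yields a centred Gaussian vector whose covariance between indices $i$ and $j$ equals the branch length from the root to the most recent common ancestor of $U_i$ and $U_j$ in $\CRT_{X^\varrho}$, namely $\min_{r \in [U_i, U_j]} X^\varrho_r$, which is exactly the conditional covariance of $(\tildep{Z}^\varrho_{U_i})_{i=1}^q$ given $X^\varrho$. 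The main obstacle is arranging the conditional Lindeberg and variance-concentration estimates to be uniform along the random spinal structure; this is handled by restricting to a good event (bounding spine lengths and out-degrees), using~\cite[Lemma~3 and Proposition~2]{Marzouk:On_the_growth_of_random_planar_maps_with_a_prescribed_degree_sequence}, similarly to the proof of Proposition~\ref{prop:marginales_hauteur}.
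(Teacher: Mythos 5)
Your overall architecture is the same as the paper's: decompose the label of each marked vertex into the conditionally independent bridge increments along its ancestral line, use the spinal decomposition of~\cite[Lemma~3]{Marzouk:On_the_growth_of_random_planar_maps_with_a_prescribed_degree_sequence} to identify the spine degrees, compute the conditional variance via $\sigma^2(k,j)=2j(k-j)/(k+1)$ so that each branch contributes $(\xi-1)/3$ on average, concentrate the sum of variances using negative correlation of the urn, and kill the branch-point contributions. All of that matches the paper and is correct.

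There is, however, one step that fails as written: your verification of the Lindeberg condition. The sum of conditional variances along a branch is of order $h\,\sigma_n^2/(3\epsilon_n)\asymp\sigma_n$, so the normalising standard deviation is $s_n\asymp\sigma_n^{1/2}$, and the Lindeberg condition involves the events $\{|\Delta_i|>\varepsilon s_n\}=\{|\Delta_i|>\varepsilon\,\Theta(\sigma_n^{1/2})\}$. The uniform bound $|\Delta_i|\le\Delta_n$ therefore only settles the matter if $\Delta_n=o(\sigma_n^{1/2})$, which is \emph{not} implied by the hypothesis $\Delta_n=o(\sigma_n)$ (take, say, $\Delta_n\asymp\sigma_n/\log\sigma_n$). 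This is exactly the delicate point of the proof, and it is why the paper does not use a sup bound but instead runs a Lyapunov-type argument on characteristic functions, controlling the third absolute moments of the bridge values via Lemma~\ref{lem:moment_pont_unif}: $\E[|B(k,j)|^3]\le Cj^{3/2}\le C\Delta_n^{1/2}(k-1)$, so that the total third-moment contribution along a spine of length $h\asymp\epsilon_n/\sigma_n$ is of order $\sigma_n^{-3/2}\,\Delta_n^{1/2}\sum_p(k_p-1)\asymp(\Delta_n/\sigma_n)^{1/2}\to0$. Your argument can be repaired the same way (a fourth-moment truncation using $\E[|B(k,j)|^4]\le C\min(j,k-j)^2$ also works), but some moment bound on the bridge values beyond the crude $|\Delta_i|\le\Delta_n$ is genuinely needed, and your proposal does not supply it.
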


Let us differ the proof of the proposition to the next subsection and finish now the proof of Theorem~\ref{thm:convergence_etiquettes}.

\begin{proof}[Proof of Theorem~\ref{thm:convergence_etiquettes}~\eqref{thm:convergence_etiquettes_carte_disque}]
Suppose that $\lim_{n \to \infty} \sigma_n^{-1} \varrho_n = \varrho \in [0, \infty)$ and $\lim_{n \to \infty} \sigma_n^{-1} \Delta_n = 0$. 
Then the sequence $(\sigma_n^{-1} \Wfn(\lfloor \vertices_n t \rfloor))_{t \in [0,1]}$ converges in distribution towards $X^\varrho$ by Proposition~\ref{prop:convergence_Luka} and, as in the previous proof, the sequence $((2\varrho_n)^{-1/2} \bfn(\varrho_n t))_{n \ge 1}$ converges in distribution towards $\bb$ whenever $\varrho_n \to \infty$. Since the two are independent, we deduce that, when $\varrho > 0$,
\[\left(\left(\frac{3}{2\sigma_n}\right)^{1/2} \bfn(1 - \infWfn(\vertices_n t))\right)_{t \in [0,1]} 
\cvloi \left((3 \varrho)^{1/2} \bb_{- \varrho^{-1} \underlinep{X}^\varrho_t}\right)_{t \in [0,1]}\]
in $\mathscr{C}([0,1],\R)$ and the limit has the same law as $(\sqrt{3} \bb^\varrho_{- \underlinep{X}^\varrho_t})_{t \in [0,1]}$; when $\varrho = 0$, the limit is instead the null process.

Next, still as in the previous proof, by Theorem~\ref{thm:tension_etiquettes}, the sequence $(\sigma_n^{-1/2} \tLfn)_{n \ge 1}$ is tight; then this equicontinuity combined with the uniform continuity of the process $\tildep{Z}^\varrho$ allows to approximate the marginals evaluated at deterministic times by sampling sufficiently many i.i.d. uniform random times. Hence Proposition~\ref{prop:marginales_labels} yields actually a convergence in $\mathscr{C}([0,1], \R)$.
Combined with Proposition~\ref{prop:convergence_Luka} we obtain the joint convergence
\[\left(\frac{1}{\sigma_n} \Wfn(\vertices_n t), \left(\frac{3}{2\sigma_n}\right)^{1/2} \tLfn(\vertices_n t)\right)_{t \in [0,1]}
\cvloi (X^\varrho_t, \tildep{Z}^\varrho_t)_{t \in [0,1]}\]
in $\mathscr{C}([0,1],\R^2)$.
Recall the decomposition~\eqref{eq:decomposition_labels} of $\Lfn$; we deduce from the conditional independence of $\bfn$ and $\tLfn$, that
\[\left(\left(\frac{3}{2\sigma_n}\right)^{1/2} \Lfn(\vertices_n t)\right)_{t \in [0,1]}
\cvloi (\tildep{Z}^\varrho_t + 3^{1/2} \bb^\varrho_{- \underlinep{X}^\varrho_t})_{t \in [0,1]},\]
and the right-hand side is the definition of the process $Z^\varrho$.
\end{proof}

\subsection{Marginals of the label process}
\label{sec:marginales_labels}

Let us now prove Proposition~\ref{prop:marginales_labels} which is the last remaining ingredient to Theorem~\ref{thm:convergence_etiquettes} and therefore to Theorem~\ref{thm:convergence_carte_disque}. We start with the case $q=1$ and comment then on the general case.

We shall need the following technical estimate. Let $x_n$ denote a vertex in $\fn$, with height $|x_n|$ and for every $1 \le p \le |x_n|$, let $k_p(x_n)$ denote the number of offspring of the ancestor of $x_n$ at height $p-1$ and let $j_p(x_n)$ the index of the offspring of this ancestor which is also an ancestor of $x_n$. Conditionally on $\fn$ and $x_n$, let $(B_p(x_n) ; 1 \le p \le x_n)$ be independent random variables, where each $B_p(x_n)$ has the law 
of a uniformly random bridge in $\mathscr{B}_{k_p(x_n)}^{\ge-1}$ evaluated at time $j_p(x_n)$. Note that they are centred, let us denote by $\sigma^2_p(x_n)$ the variance of $B_p(x_n)$, which is measurable with respect to $\fn$ and $x_n$.

\begin{lem}\label{lem:convergence_variance_labels}
Assume that $\lim_{n \to \infty} \sigma_n^{-1} \varrho_n = \varrho \in [0, \infty)$ and $\lim_{n \to \infty} \sigma_n^{-1} \Delta_n = 0$. For every $n \ge 1$, sample $(\fn, \ell)$ uniformly at random in $\LFn$ and independently sample a vertex $x_n$ uniformly at random. Then
\[\frac{3 \edges_n}{\sigma_n^2 |x_n|} \sum_{p=1}^{|x_n|} \sigma^2_p(x_n) \cvproba 1.\]
\end{lem}

\begin{proof}
Let us first prove the claim on the event $\mathcal{E}_n(K)$ defined as follows with $K > 0$ large but fixed:
\begin{equation}\label{eq:bon_evenement_marginales_labels}
\mathcal{E}_n(K) = \left\{\frac{\sigma_n}{\edges_n} |x_n| \in [K^{-1}, K] \text{ and } \LR(x_n) \le K \sigma_n\right\}.
\end{equation}
Indeed by combining Proposition~\ref{prop:queues_exp_Luka}, Remark~\ref{rem:queues_exp_hauteur_typique_foret}, as well as Theorem~\ref{thm:marginales_hauteur} for the lower bound on $|x_n|$, we see that $\lim_{K \to \infty} \liminf_{n \to \infty} \Pr{\mathcal{E}_n(K)} = 1$.
Let $\sigma^2(k,j)$ denotes the variance of a uniformly random bridge in $\mathscr{B}_k^{\ge-1}$ evaluated at time $j$, which is known explicitly, see e.g. Marckert \& Miermont~\cite[page 1664\footnote{Note that they consider uniform random bridges in $\mathscr{B}_{k+1}^{\ge-1}$!}]{MM07}: we have
\[\sigma^2(k,j) = \frac{2j(k-j)}{k+1},
\qquad\text{so}\qquad
\sum_{j=1}^k \sigma^2(k,j) = \frac{k(k-1)}{3}.\]
Fix $\delta > 0$, then by~\eqref{eq:epine_un_sommet},
\begin{multline*}
\Pr{\left\{\left|\frac{3 \edges_n}{\sigma_n^2 |x_n|} \sum_{p = 1}^{|x_n|} \sigma^2(x_n) - 1\right| > \delta\right\} \cap \1_{\mathcal{E}_n(K)}}
\\
\le K (\varrho + K + o(1)) \sup_{K^{-1} \edges_n / \sigma_n \le h \le K \edges_n / \sigma_n} \Pr{\left|\frac{3 \edges_n}{\sigma_n^2 h} \sum_{p = 1}^h \sigma^2_p(\xi_{d_n}(p), \chi_{d_n}(p)) - 1\right| > \delta}
\end{multline*}
We calculate the first two moments: to ease notation, let $(\xi, \xi', \chi, \chi')$ have the law of the quadruple $(\xi_{d_n}(1), \xi_{d_n}(2), \chi_{d_n}(1), \chi_{d_n}(2))$. Since $\chi$ has the uniform distribution on $\{1, \dots, \xi\}$, then
\[\Es{\sum_{p = 1}^h \sigma^2(\xi_{d_n}(p), \chi_{d_n}(p))}
= h \cdot \Es{\xi^{-1} \sum_{j=1}^\xi \sigma^2(\xi,j)}
= h \cdot \Es{\frac{\xi-1}{3}}
= \frac{\sigma_n^2 h}{3 \edges_n}.\]
It only remains to prove that the variance is small; we have similarly
\begin{multline*}
\Es{\left(\frac{3 \edges_n}{\sigma_n^2 h} \sum_{p = 1}^h \sigma^2(\xi_{d_n}(p), \chi_{d_n}(p))\right)^2}
\\= \left(\frac{3 \edges_n}{\sigma_n^2 h}\right)^2 \left(h \Es{\sigma^2(\xi, \chi)^2} + h(h-1) \Es{\sigma^2(\xi, \chi) \sigma^2(\xi', \chi')}\right).
\end{multline*}
Since the $\chi$'s are independent conditionally on the $\xi$'s, we have
\[\Es{\sigma^2(\xi, \chi) \sigma^2(\xi', \chi')}
= \Es{\frac{(\xi-1)(\xi'-1)}{9}}
= \left(\frac{\sigma_n^2}{3 \edges_n}\right)^2 (1+o(1)),\]
where we used that two samples without replacement decorrelate as the number of possible picks tends to infinity. We conclude that 
\[\Es{\frac{3 \edges_n}{\sigma_n^2 h} \sum_{p = 1}^h \sigma^2(\xi_{d_n}(p), \chi_{d_n}(p))} = 1,\]
and, since $\sigma^2(\xi, \chi) \le \xi \le \Delta_n$, then this random variable has variance
\begin{align*}
&\left(\frac{3 \edges_n}{\sigma_n^2 h}\right)^2 \left(h \Es{\sigma^2(\xi, \chi)^2} + h(h-1) \Es{\sigma^2(\xi, \chi) \sigma^2(\xi', \chi')}\right) - 1
\\
&\qquad\le \left(\frac{3 \edges_n}{\sigma_n^2 h}\right)^2 \left(h \Delta_n \frac{\sigma_n^2}{3 \edges_n} + h(h-1) \left(\frac{\sigma_n^2}{3 \edges_n}\right)^2 (1+o(1))\right) - 1
\\
&\qquad\le \frac{3 \edges_n}{\sigma_n^2 h} \Delta_n + o(1),
\end{align*}
which converges to $0$ uniformly in $h \in [K^{-1} \edges_n / \sigma_n, K \edges_n / \sigma_n]$ since we assume that $\Delta_n = o(\sigma_n)$. This proves that $\frac{3 \edges_n}{\sigma_n^2 |x_n|} \sum_{p = 1}^{|x_n|} \sigma^2_p(x_n)$ converges in probability to $1$ on the event $\mathcal{E}_n(K)$ and therefore also unconditionally by letting further $K$ tend to infinity.
\end{proof}

We may now prove Proposition~\ref{prop:marginales_labels}.

\begin{proof}[Proof of Proposition~\ref{prop:marginales_labels}]
Let us start with the case $q=1$.
Let $U$ have the uniform distribution on $[0,1]$ and let $x_n$ be the uniformly random vertex visited at the time $\lfloor \vertices_n U \rfloor$ in lexicographical order, with label $\ell(x_n) = \Lfn(\lfloor \vertices_n U \rfloor)$ and height $|x_n| = \Hfn(\lfloor \vertices_n U \rfloor)$. 
Let us write
\[\left(\frac{3}{2\sigma_n}\right)^{1/2} \ell(x_n)
= \left(\frac{\sigma_n}{2 \edges_n} |x_n|\right)^{1/2} \left(\frac{3 \edges_n}{\sigma_n^2 |x_n|}\right)^{1/2} \ell(x_n).\]
Recall from Theorem~\ref{thm:marginales_hauteur} that $\frac{\sigma_n}{2 \edges_n} |x_n|$ converges in distribution towards $\tX_U$, it is therefore equivalent to show that, jointly with this convergence, we have
\begin{equation}\label{eq:cv_label_unif}
\left(\frac{3 \edges_n}{\sigma_n^2 |x_n|}\right)^{1/2} \ell(x_n) \quad\mathop{\Longrightarrow}_{n \to \infty}\quad \mathscr{N}(0, 1),
\end{equation}
where $\mathscr{N}(0, 1)$ denotes the standard Gaussian distribution and `$\Rightarrow$' is a slight abuse of notation to refer to the weak convergence of the law of the random variable. 
The idea is to decompose $\ell(x_n)$ as the sum of the label increments $(B_p(x_n) ; 1 \le p \le |x_n|)$ between two consecutive ancestors, as defined before the preceding lemma. To ease notation, let us set $\Sigma_n^2 = \sum_{p=1}^{|x_n|} \sigma^2_p(x_n)$, then the Central Limit Theorem for triangular arrays shows that the conditional law of $\Sigma_n^{-1} \ell(x_n)$ converges to the standard Gaussian distribution as soon as for every $\varepsilon > 0$,
\begin{equation}\label{eq:Lyapunov_TCL}
\Sigma_n^{-2} \sum_{p=1}^{|x_n|} \Esc{B_p(x_n)^2 \ind{|B_p(x_n)| > \varepsilon \Sigma_n}}{\fn, x_n} \cv 0.
\end{equation}
Our claim~\eqref{eq:cv_label_unif} then follows from~\eqref{eq:Lyapunov_TCL} and Lemma~\ref{lem:convergence_variance_labels}. As in the proof of this lemma we shall implicitly work on the intersection of the event $\mathcal{E}_n(K)$ defined in~\eqref{eq:bon_evenement_marginales_labels} and $\{|\Sigma_n^2 \frac{3 \edges_n}{\sigma_n^2 |x_n|} - 1|\le K^{-1}\}$ with $K>0$ fixed, whose probability tends to $1$ when first $n\to\infty$ and then $K\to\infty$. Note that this allows to replace equivalently $\Sigma^2_n$ by $\sigma_n$ in~\eqref{eq:Lyapunov_TCL}. As proved in~\cite[Lemma~1]{LGM11}, for every $r \ge 1$, there exists a constant $K_{(r)} > 0$ such that for every $k \ge j \ge 1$, the $(2r)$'th moment of a uniformly random bridge in $\mathscr{B}_k^{\ge-1}$ evaluated at time $j$ is bounded above by $K_{(r)} |k-j|^r$.
Fix $\varepsilon > 0$, then, by first using the Cauchy--Schwarz and the Markov inequalities,
\begin{align*}
\Esc{B_p(x_n)^2 \ind{|B_p(x_n)| > \varepsilon \Sigma_n}}{\fn, x_n}^2
&\le \Esc{B_p(x_n)^4}{\fn, x_n} \frac{\E[|B_p(x_n)|^2 \mid \fn, x_n]}{\varepsilon^2 \Sigma_n^2}
\\
&\le K_{(2)} |k_p(x_n)-j_p(x_n)|^2 \frac{K_{(1)} |k_p(x_n)-j_p(x_n)|}{\varepsilon^2 \Sigma_n^2}
\\
&\le \frac{K'}{\varepsilon^2} \frac{|k_p(x_n)-j_p(x_n)|^3}{\sigma_n}
,\end{align*}
where $K' > 0$ depends on $K$.
Therefore the left hand side in~\eqref{eq:Lyapunov_TCL} is upper bounded by some constant that depends on $K$ and $\varepsilon$ times
\[\frac{1}{\sigma_n} \sum_{p=1}^{|x_n|} \sqrt{\frac{|k_p(x_n)-j_p(x_n)|^3}{\sigma_n}}
\le \sqrt{\frac{\Delta_n}{\sigma_n}} \frac{1}{\sigma_n} \sum_{p=1}^{|x_n|} |k_p(x_n)-j_p(x_n)|,\]
where the inequality is obtained by upper bounding $|k_p(x_n)-j_p(x_n)|$ by $\Delta_n$. Observe that the last sum precisely equals the value of the {\L}ukasiewicz path at time $\lfloor \vertices_n U \rfloor$, which is of order $\sigma_n$ with high probability. Since $\Delta_n/\sigma_n \to 0$, then we conclude that indeed~\eqref{eq:Lyapunov_TCL} holds on an event which is measurable with respect to $\fn$ and $x_n$ and whose probability is arbitrarily close to $1$. Combined with Lemma~\ref{lem:convergence_variance_labels} this concludes the proof of~\eqref{eq:cv_label_unif} and thus of the proposition for $q=1$.

We next sketch the proof of the claim for $q=2$, the general case $q \ge 3$ is exactly the same and only requires more notation. We shall need the following result, which claims that in the case of no macroscopic degree, there is no macroscopic label increment in the sense that the maximal difference along an edge is small:
\begin{equation}\label{eq:deplacement_max_autour_site}
\sigma_n^{-1/2} \max_{x \in \fn} \left|\max_{1 \le j \le k_x} \ell(xj) - \min_{1 \le j \le k_x} \ell(xj)\right| \cvproba 0.
\end{equation}
This extends Proposition~2 in~\cite{Mar18b} to which we refer for a proof, just replace $N_\mathbf{n}$ there by $\sigma_n^2$.
Let $x_n$ and $y_n$ be independent uniform random vertices of $\fn$ and let us consider the three branches consisting of: their common ancestors on one side, the ancestors of $x_n$ only on another side, and finally those of $y_n$ only. According to Theorem~\ref{thm:marginales_hauteur}, their lengths jointly converge in distribution, when rescaled by a factor $\sigma_n/(2 \edges_n)$.
By~\eqref{eq:deplacement_max_autour_site}, the label increments between the last common ancestor of $x_n$ and $y_n$ and its offspring which are ancestors of $x_n$ and $y_n$ respectively are small compared to our scaling, and the total label increment on each of the three remaining branches are independent; it only remains to prove that the law of these increments, multiplied by $(3 \edges_n/\sigma_n^2)^{1/2}$ and divided by the square-root of their length converges to the standard Gaussian law, as in~\eqref{eq:cv_label_unif}. This can be obtained in the very same way as in the case $q=1$, appealing to Lemma~\ref{lem:multi_epines_sans_remise} to compare the content of each branch with the sequence $(\xi_{d_n}(p), \chi_{d_n}(p))_p$. As in the proof of Theorem~\ref{thm:marginales_hauteur}, the fact that we now have three branches is compensated by the factor $(\sigma_n / \vertices_n)^3$ in Lemma~\ref{lem:multi_epines_sans_remise}, we leave the details to the reader.
\end{proof}

\section{Stable Boltzmann maps}
\label{sec:BGW_Boltzmann}

In this last section, we study size-conditioned stable Boltzmann random maps by relying on our main results. 
In Sections~\ref{sec:marche_stable} and~\ref{sec:Luka_stable}, we present the precise setup and the assumptions we shall make on such laws, by relying on the {\L}ukasiewicz path of the associated labelled forest. 
Then in Section~\ref{sec:loi_sauts_Luka_stable} we state and prove the main result of this section, Theorem~\ref{thm:GW}, which shows that the random degree distribution satisfies the main assumptions from the introduction, according to the value of the stability index. We then easily derive the behaviour of the maps in Sections~\ref{sec:Boltzmann} and~\ref{sec:Boltzmann_sous_crit}, see Theorems~\ref{thm:Boltzmann} and~\eqref{thm:Boltzmann_sous_crit}.
 
Throughout this section, we shall divide by real numbers which depend on an integer $n$, and consider conditional probabilities with respect to events which depend on $n$; we shall therefore always implicitly restrict ourselves to those values of $n$ for which such quantities are well defined and statements such as `as $n \to \infty$' should be understood along the appropriate subsequence of integers.

\subsection{On stable domains of attraction}
\label{sec:marche_stable}

Throughout this section, we work with a sequence $(X_i)_{i \ge 1}$ of i.i.d. copies of a random variable $X$ with distribution, say $\nu$, supported by (a subset of) $\Z_{\ge -1}$ with $\nu(-1) \ne 0$, with finite first moment and $\E[X_1] = 0$. For every integer $n \ge 1$, we let $S_n = X_1 + \dots + X_n$, and set $S_0 = 0$. 
For every integers $n \ge 1$ and $k \ge -1$, let $K_k(n) = \#\{1 \le i \le n : X_i = k\}$ be the number of jumps of size $k$ up to time $n$; for a subset $A \subset \Z_{\ge -1}$, set then $K_A(n) = \sum_{k \in A} K_k(n)$. Finally, for $\varrho \ge 1$, let $\zeta(S, \varrho) = \inf\{i \ge 1 : S_i = - \varrho\}$, and simply write $\zeta(S)$ for $\zeta(S,1)$.

We recall that a measurable function $l : [0, \infty) \to [0, \infty)$ is said to be \emph{slowly varying} (at infinity) if for every $c > 0$, we have $\lim_{x \to \infty} l(cx)/l(x) = 1$. 
For $\alpha \in [1,2]$, we say that $\nu$ satisfies $\mathrm{(H_\alpha)}$ if the tail distribution can be written as
\[\Pr{X \ge n} = n^{-\alpha} L_1(n),\]
where $L_1$ is slowly varying. We also include the case where $X$ has finite variance in $\mathrm{(H_2)}$. Finally, when $\alpha = 1$, we say that $\nu$ satisfies $\mathrm{(H_1^{loc})}$ when the mass function can be written as
\[\Pr{X = n} = n^{-2} L_1(n),\]
where $L_1$ is slowly varying.
The assumption $\mathrm{(H_\alpha)}$ corresponds to the \emph{domain of attraction} of a stable law with index $\alpha$, and $\mathrm{(H_1^{loc})}$ is more restrictive than $\mathrm{(H_1)}$. 

When $\alpha \in (1,2]$, it is well known that there exists a sequence $(a_n)_{n \ge 1}$ such that $(n^{-1/\alpha} a_n)_{n \ge 1}$ is slowly varying
and $a_n^{-1} (X_1 + \dots + X_n)$ converges in distribution to $\mathscr{X}^{(\alpha)}$ with Laplace transform given by $\E[\exp(- t \mathscr{X}^{(\alpha)})] = \exp(t^\alpha)$ for $t > 0$. Note that $\mathscr{X}^{(2)}$ has the Gaussian distribution with variance $2$; as a matter of fact, if $X$ has finite variance $\sigma^2$, then we may take $a_n = (n \sigma^2 / 2)^{1/2}$. Moreover, there exists another slowly varying function $L$ such that for every $n \ge 1$, we have $\Var(X \ind{X \le n}) = n^{2-\alpha} L(n)$. This function is related to $L_1$ by
\begin{equation}\label{eq:ratio_fonctions_var_lente}
\lim_{n \to \infty} \frac{L_1(n)}{L(n)} 
= \lim_{n \to \infty} \frac{n^2 \P(X \ge n)}{\Var(X \ind{X \le n})} 
= \frac{2-\alpha}{\alpha},
\end{equation}
see Feller~\cite[Chapter~XVII, Equation~5.16]{Fel71}. 
Finally, according to~\cite[Equation~7]{Kor17}, we have
\begin{equation}\label{eq:constante_Levy}
\lim_{n \to \infty} \frac{n L(a_n)}{a_n^\alpha} = \frac{1}{(2-\alpha) \Gamma(-\alpha)},
\end{equation}
where, by continuity, the limit is interpreted as equal to $2$ if $\alpha=2$.

In the case $\alpha=1$, when $\mathrm{(H_1)}$ is the domain of attraction of a Cauchy distribution, in addition to the sequence $(a_n)_{n \ge 1}$, there exists another sequence $(b_n)_{n \ge 1}$ such that both $(n^{-1} a_n)_{n \ge 1}$ and $(n^{-1} b_n)_{n \ge 1}$ are slowly varying, with $b_n \to -\infty$ and $b_n / a_n \to -\infty$, and now $a_n^{-1} (X_1 + \dots + X_n - b_n)$ converges in distribution to $\mathscr{X}^{(1)}$ with Laplace transform given by $\E[\exp(- t \mathscr{X}^{(1)})] = \exp(t \ln t)$ for $t > 0$. An example to have in mind when dealing with this rather unusual regime is given by Kortchemski \& Richier~\cite{KR19}: take $\nu(n) \sim \frac{c}{n^2 \ln(n)^2}$, then $a_n \sim \frac{c n}{\ln(n)^2}$ and $b_n \sim - \frac{c n}{\ln(n)}$.

\subsection{Local limit theorems and conditioned paths}
\label{sec:Luka_stable}

Let us now describe our model of random paths, these assumptions and notations shall be used throughout this section. 
We shall assume that $\nu$ satisfies either $\mathrm{(H_\alpha)}$ for some $\alpha \in [1,2]$, or $\mathrm{(H_1^{loc})}$, and then the sequences $(a_n)_{n \ge 1}$ and $(b_n)_{n \ge 1}$ will always be as in the preceding subsection. 
Fix $n \ge 1$, $\varrho_n \ge 1$, and a set $A \subset \Z_{\ge -1}$ with $\nu(A) > 0$, then:
\begin{itemize}
\item Let us denote by $W^{\varrho_n}$ the path $S$ stopped when first hitting $-\varrho_n$; if $\varrho_n = 1$, simply write $W$.

\item If $\nu$ satisfies $\mathrm{(H_\alpha)}$ for some $\alpha \in (1,2]$, then we let $W^{\varrho_n}_{n,A}$ be the path $W^{\varrho_n}$ conditioned to have made $n$ jumps with value in the set $A$.

\item If $\nu$ satisfies $\mathrm{(H_1^{loc})}$, then we only consider $A = \Z_{\ge -1}$ and we define $W^{\varrho_n}_n=W^{\varrho_n}_{n,\Z_{\ge -1}}$ similarly so it reduces to conditioning the walk $S$ to first hit $-\varrho_n$ at time $n$ and stopping it there.

\item If $\nu$ satisfies $\mathrm{(H_1)}$, then again $A = \Z_{\ge -1}$ but now $\varrho_n = 1$, and we let $W_{\ge n}$ be the walk $S$ conditioned to stay nonnegative at least up to time $n$ and stopped when first hitting $-1$ (at a random time).
\end{itemize}
We recalled how to construct the path $W^{\varrho_n}_{n}$ by cyclically shifting a bridge, i.e. the walk $S$ up to time $n$, conditioned to satisfy $S_n = -\varrho_n$. This construction can be generalised to any subset $A$, by cyclically shifting the walk $S$ up to its $n$'th jump with value in $A$, and conditioned to be at $-\varrho_n$ at this moment, see Kortchemski~\cite[Lemma~6.4]{Kor12} for a detailed proof in the case $\varrho_n = 1$ and $A = \{-1\}$; it extends here: replacing `$=-1$' by `$\in A$' does not change anything, and when $\varrho_n \ge 2$, there are $\varrho_n$ cyclically shifted bridges which are first-passage bridges, but this factor $\varrho_n$ cancels since the cycle lemma is used twice.

The study of such bridges relies strongly on local limit theorems, which are used to compare them to unconditioned random walks.
By~\cite[Equation~50]{Kor12} the following holds: suppose that $\nu$ satisfies $\mathrm{(H_\alpha)}$ with $\alpha \in (1, 2]$, fix $A \subset \Z_{\ge -1}$ such that $\nu(A) > 0$, and, if $\nu$ has infinite variance, suppose that either $A$ or $\Z_{\ge -1} \setminus A$ is finite. Then for any sequence $(\varrho_n)_{n\ge 1}$ such that $\limsup_{n \to \infty} a_n^{-1} \varrho_n < \infty$, we have
\begin{equation}\label{eq:LLT_GW}
\lim_{n \to \infty} \left|n \cdot \Pr{K_A(\zeta(S, \varrho_n)) = n} - \frac{\nu(A)^{1/\alpha} \varrho_n}{a_n} \cdot p_1\left(- \frac{\nu(A)^{1/\alpha} \varrho_n}{a_n}\right)\right| = 0,
\end{equation}
where $p_1$ is the density of $\mathscr{X}^{(\alpha)}$.
We shall mostly be interested in the cases $A = \Z_{\ge -1}$, $A = \{-1\}$, and $A = \Z_{\ge 0}$. Nevertheless, Th\'evenin~\cite{The20} proposed a different method, relying on a multivariate local limit theorem, in order to lift the restriction that either $A$ or its complement should be finite; Proposition~6.5 there corresponds to~\eqref{eq:LLT_GW} in the case $\rho_n=1$ on which the latter work focuses, but the general case can be adapted similarly.

In the Cauchy regime $\alpha = 1$, such a local limit theorem is not known for general sets $A$, 
which is the reason why we restrict ourselves to the case $A = \Z_{\ge -1}$. 
Then $K_{\Z_{\ge -1}}(k) = k$ so what we consider in~\eqref{eq:LLT_GW} is simply the first hitting time of $-\varrho_n$. After cyclic shift, the probability that the latter equals $n$ is given by $n^{-1} \varrho_n \P(S_n = - \varrho_n)$. When $\nu$ satisfies $\mathrm{(H_1^{loc})}$, we read from the recent work of Berger~\cite[Theorem~2.4]{Ber19} that, when $\varrho_n = O(a_n) = o(|b_n|)$, we have
\begin{equation}\label{eq:LLT_GW_Cauchy_loc}
\Pr{\zeta(S, \varrho_n) = n}
\equivalent 
\varrho_n \cdot \frac{L_1(|b_n| - \varrho_n)}{(|b_n| - \varrho_n)^2},
\end{equation}
see~\cite[Equation~2.10]{Ber19} with $p = \alpha = 1$ and $x = |b_n| - \varrho_n$.
When $\nu$ only satisfies $\mathrm{(H_1)}$, Kortchemski \& Richier~\cite[Proposition~12]{KR19} proved that, in the case $\varrho_n = 1$,
\begin{equation}\label{eq:LLT_GW_Cauchy_gen}
\Pr{\zeta(S) \ge n}
\equivalent 
\Lambda(n) \cdot \frac{L_1(|b_n|)}{|b_n|},
\end{equation}
where $\Lambda$ is some other slowly varying function unimportant here.

\subsection{On the empirical jump distribution}
\label{sec:loi_sauts_Luka_stable}

We study in this subsection the random paths $W^{\varrho_n}_{n,A}$ and $W_{\ge n}$ described above. 
Let $\zeta(W^{\varrho_n})$ denote the number of steps of the path $W^{\varrho_n}$, and for every subset $B \subset \Z_{\ge -1}$, let
\[J_B(W^{\varrho_n}) = \#\{1 \le i \le \zeta(W^{\varrho_n}) : W^{\varrho_n}(i) - W^{\varrho_n}(i-1) \in B\},\]
and define similar quantities for $W^{\varrho_n}_{n,A}$ and $W_{\ge n}$.
Our first result is a strong law of large numbers.

\begin{lem}\label{lem:scaling_GW}
Let $B$ any subset of $\Z_{\ge -1}$. Assume that either $\nu$ satisfies $\mathrm{(H_\alpha)}$ for some $\alpha \in (1, 2]$ and then take $A \subset \Z_{\ge -1}$ arbitrary (with $\nu(A) > 0$), or that $\nu$ satisfies $\mathrm{(H_1^{loc})}$ and then take $A = \Z_{\ge -1}$. Finally assume that $\limsup_{n \to \infty} a_n^{-1} \varrho_n < \infty$. Then we have
\[n^{-1} J_B(W^{\varrho_n}_{n,A}) \cvps \frac{\nu(B)}{\nu(A)}.\]
If $\nu$ satisfies $\mathrm{(H_1)}$, then
\[\zeta(W_{\ge n})^{-1} J_B(W_{\ge n}) \cvps \nu(B).\]
\end{lem}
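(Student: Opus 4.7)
The two claims rest on a single scheme: under the unconditioned law of $S$, the ordinary strong law of large numbers produces the target limit; this convergence then survives the conditioning because the conditioning events have at most polynomial decay in $n$ (from the local limit theorems \eqref{eq:LLT_GW}, \eqref{eq:LLT_GW_Cauchy_loc} and \eqref{eq:LLT_GW_Cauchy_gen} recalled just before the lemma), whereas the Bernoulli-type fluctuations of the relevant empirical proportions have exponential tails, so the conditional deviation probability is summable in $n$.

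For the first claim, set $\tau_n(A) = \min\{k \ge 1 : K_A(k) = n\}$. Since cyclic shifts preserve the multiset of increments of the path, the cyclic shift construction gives
\[J_B(W^{\varrho_n}_{n,A}) \eqloi K_B(\tau_n(A))\]
under the conditional law $\Prc{\cdot}{S_{\tau_n(A)} = -\varrho_n}$. Under the unconditioned law, the sequences $(\ind{X_i \in A})_i$ and $(\ind{X_i \in B})_i$ are i.i.d.\ Bernoulli, so the SLLN gives $\tau_n(A)/n \cvps \nu(A)^{-1}$ and $K_B(k)/k \cvps \nu(B)$, hence $n^{-1} K_B(\tau_n(A)) \cvps \nu(B)/\nu(A)$. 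Writing $\tau_n(A) = n + \sum_{i=1}^n G_i$ with $G_i$ i.i.d.\ geometric of parameter $\nu(A)$ (the number of non-$A$ jumps between two consecutive $A$-jumps), and decomposing $K_B(\tau_n(A))$ into an $A$-part and a non-$A$-part, both sums of independent Bernoulli variables, a Cram\'er--Chernoff estimate yields
\[\Pr{\left|n^{-1} K_B(\tau_n(A)) - \nu(B)/\nu(A)\right| > \varepsilon} \le C_\varepsilon \, e^{-c_\varepsilon n}\]
for every $\varepsilon > 0$. Since $\limsup a_n^{-1} \varrho_n < \infty$, \eqref{eq:LLT_GW} (when $\alpha \in (1, 2]$) or \eqref{eq:LLT_GW_Cauchy_loc} (when $\alpha = 1$, in which case $A = \Z_{\ge -1}$ forces $\tau_n(A) = n$) shows that $\Pr{S_{\tau_n(A)} = -\varrho_n}$ decays at most polynomially in $n$, so the ratio of the two bounds is summable and Borel--Cantelli on a common realisation delivers the almost sure convergence.

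The second claim is handled by the same principle. Under the unconditioned law, $\zeta(S) < \infty$ almost surely since the centred walk is recurrent, and the SLLN gives $K_B(k)/k \cvps \nu(B)$. Decomposing over the value of $\zeta$ and applying the pointwise Chernoff bound,
\[\Pr{\left|\frac{K_B(\zeta)}{\zeta} - \nu(B)\right| > \varepsilon,\ \zeta \ge n}
\le \sum_{k \ge n} \Pr{\left|\frac{K_B(k)}{k} - \nu(B)\right| > \varepsilon} \le C_\varepsilon' \, e^{-c_\varepsilon' n},\]
whereas \eqref{eq:LLT_GW_Cauchy_gen} shows that $\Pr{\zeta \ge n}$ decays at most as a slowly varying function divided by $|b_n|$, which is still at most polynomial. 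The conditional probability of an $\varepsilon$-deviation is therefore super-polynomially small and Borel--Cantelli concludes.

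The only delicate step is the Cram\'er bound in the first claim, because the ratio $K_B(\tau_n(A))/n$ has both numerator and denominator built from the same walk; the $A$-versus-non-$A$ decomposition outlined above resolves this by reducing the problem to two independent sums of Bernoulli variables. Everything else is a routine change of measure exploiting the polynomial-versus-exponential gap.
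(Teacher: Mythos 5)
Your proof is correct and follows essentially the same strategy as the paper's: an exponential concentration bound for the unconditioned walk, divided by the at-most-polynomial probability of the conditioning event (supplied by the local limit theorems \eqref{eq:LLT_GW}, \eqref{eq:LLT_GW_Cauchy_loc}, \eqref{eq:LLT_GW_Cauchy_gen}), followed by Borel--Cantelli. The only real divergence is in how the unconditioned concentration is obtained: you exploit the renewal structure of the $A$-jumps (fixing $J_A=n$ deterministically via $\tau_n(A)$ and splitting $K_B$ into an $A$-part and a non-$A$-part governed by geometric interarrival counts), whereas the paper sums over the total length $\zeta(W^{\varrho_n})=N\ge n$ and uses an inclusion of events relating $J_A/\zeta$ and $J_B/\zeta$ to $\nu(A)$ and $\nu(B)$; both are valid, and yours is arguably a little cleaner. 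One point worth noting: the paper proves the sharper quantitative bound \eqref{eq:concentration_feuilles} with deviation window $\delta n^{-1/4}$, which is reused later (in the proof of Proposition~\ref{prop:biais_cartes_Boltzmann_pointees}); your fixed-$\varepsilon$ version suffices for the lemma as stated but does not furnish that byproduct.
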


In particular, when $\alpha > 1$, recalling that $n^{-1/\alpha} a_n$ is slowly varying, we have for $B = \Z_{\ge -1}$:
\[n^{-1} \zeta(W^{\varrho_n}_{n,A}) \cvps \nu(A)^{-1}
\qquad\text{and so}\qquad
a_n^{-1} a_{\zeta(W^{\varrho_n}_{n,A})} \cvps \nu(A)^{-1/\alpha}.\]
When $\nu$ satisfies $\mathrm{(H_1^{loc})}$, we simply have $\zeta(W^{\varrho_n}_n) = n$; finally, when $\nu$ satisfies $\mathrm{(H_1)}$, we obtain appealing to~\cite[Theorem~30]{KR19} that
\[|b_n|^{-1} |b_{\zeta(W^{\varrho_n}_n)}| \cvloi I,\]
where $I$ has the law $\P(I \ge x) = x^{-1}$ for all $x \ge 1$.
Therefore in any case the natural scaling factor for our conditioned paths, which should involve their total length $\zeta$, may be replaced by a factor depending only on $n$.  The more general statement, for an arbitrary set $B$, shall be used later when dealing with random maps.

\begin{proof}
Let us detail the case $\alpha \in (1,2]$. 
Fix $\delta > 0$ small, we claim that there exist $c, C > 0$ such that for every $n$, we have
\begin{equation}\label{eq:concentration_feuilles}
\Pr{\left|\frac{n}{J_B(W^{\varrho_n}_{n,A})}- \frac{\nu(A)}{\nu(B)}\right| > \frac{\delta}{n^{1/4}}} \le C \e^{- c n^{1/2}}.
\end{equation}
Since the right-hand side is summable, this indeed shows that $n / J_B(W^{\varrho_n}_{n,A})$ converges to $\nu(A) / \nu(B)$ almost surely. Let us write this conditional probability as
\[\frac{1}{\P(J_A(W^{\varrho_n}) = n)} \cdot
\Pr{\left|\frac{J_A(W^{\varrho_n})}{J_B(W^{\varrho_n})}- \frac{\nu(A)}{\nu(B)}\right| > \frac{\delta}{n^{1/4}} \text{ and } J_A(W^{\varrho_n}) = n}.\]
Recall the local limit estimate in~\eqref{eq:LLT_GW}; it is known that $p_1$ is continuous and positive, so in particular bounded away from $0$ and $\infty$ on any compact interval. Hence $\varrho_n^{-1} n a_n \P(J_A(W^{\varrho_n}) = n)$ is bounded away from $0$ and $\infty$ since $\varrho_n = O(a_n)$. Moreover, $\varrho_n \ge 1$ and $a_n = O(n^{3/2})$ and finally $\zeta(W^{\varrho_n}) \ge J_A(W^{\varrho_n})$, so 
the probability that $|\frac{n}{J_B(W^{\varrho_n}_{n,A})}- \frac{\nu(A)}{\nu(B)}| > \frac{\delta}{n^{1/4}}$ is bounded above by some constant times
\[n^{5/2}  \Pr{\left|\frac{J_A(W^{\varrho_n})}{J_B(W^{\varrho_n})}- \frac{\nu(A)}{\nu(B)}\right| > \frac{\delta}{n^{1/4}} \text{ and } \zeta(W^{\varrho_n}) \ge n},\]
and it remains to bound this last probability. Straightforward calculations show that if $n$ is large enough (so e.g. $\delta n^{-1/4} < \nu(B) / 2$), then the event
\[\left\{\left|\frac{J_A(W^{\varrho_n})}{\zeta(W^{\varrho_n})} - \nu(A)\right| \le \frac{\delta}{n^{1/4}}\right\}
\cap \left\{\left|\frac{J_B(W^{\varrho_n})}{\zeta(W^{\varrho_n})} - \nu(B)\right| \le \frac{\delta}{n^{1/4}}\right\}\]
is included in
\[\left\{\left|\frac{J_A(W^{\varrho_n})}{J_B(W^{\varrho_n})} - \frac{\nu(A)}{\nu(B)}\right| \le \frac{\delta'}{n^{1/4}}\right\},\]
for some explicit $\delta'$ which depends on $\delta$, $\nu(A)$ and $\nu(B)$.
We may write
\begin{align*}
&\Pr{\left|\frac{J_A(W^{\varrho_n})}{\zeta(W^{\varrho_n})}- \nu(A)\right| > \frac{\delta}{n^{1/4}} \text{ and } \zeta(W^{\varrho_n}) \ge n}
\\
&\qquad\le \sum_{N \ge n} \Pr{\left|\frac{J_A(W^{\varrho_n})}{N}- \nu(A)\right| > \frac{\delta}{N^{1/4}} \text{ and } \zeta(W^{\varrho_n}) = N}
\\
&\qquad\le \sum_{N \ge n} \Pr{\left|\frac{\#\{1 \le i \le N : X_i \in A\}}{N}- \nu(A)\right| > \frac{\delta}{N^{1/4}}}
\\
&\qquad\le \sum_{N \ge n} 2 \exp(- 2 \delta^2 N^{1/2}),
\end{align*}
where the last bound follows from the Chernoff bound for binomial distributions.
The last sum is bounded by some constant times $n^{1/2} \exp(- 2 \delta^2 n^{1/2})$; the same holds with the set $B$, so we obtain after a union bound,
\[\Pr{\left|\frac{n}{J_B(W^{\varrho_n}_{n,A})}- \frac{\nu(A)}{\nu(B)}\right| > \frac{\delta}{n^{1/4}}}
\le K n^3 \e^{- 2 \delta^2 n^{1/2}},\]
for some $K > 0$ and the proof in the case $\alpha \in (1,2]$ is complete.

The argument is very similar in the case $\alpha=1$, appealing to~\eqref{eq:LLT_GW_Cauchy_loc} and~\eqref{eq:LLT_GW_Cauchy_gen}, we leave the details to the reader.
\end{proof}

In the next theorem, we prove that the empirical jump distribution of our conditioned paths fits in our general framework. For any path $P = (P_i)_{i \ge 0}$, we let $\Delta(P) = \max\{i \ge 1 : P_i - P_{i-1}\}$ be the largest jump, and we let $\Delta'(P)$ be the second largest jump.

\begin{thm}\label{thm:GW}
Assume that $\nu$ satisfies $\mathrm{(H_\alpha)}$ for some $\alpha \in [1, 2]$ or $\mathrm{(H_1^{loc})}$ and fix $A \subset \Z_{\ge -1}$ with $\nu(A) > 0$ arbitrary if $\alpha>1$, but take $A = \Z_{\ge -1}$ otherwise. Let $(\varrho_n)_{n \ge 1}$ be such that $\lim_{n \to \infty} a_n^{-1} \varrho_n = \varrho \nu(A)^{-1/\alpha}$ for some $\varrho \in [0, \infty)$.
\begin{enumerate}
\item\label{thm:GW_Gaussien} If $\nu$ satisfies $\mathrm{(H_2)}$, then 
\[a_n^{-2} \sum_{k \ge 1} k (k+1) J_k(W^{\varrho_n}_{n,A}) \cvproba \frac{2}{\nu(A)}
\qquad\text{and}\qquad
a_n^{-1} \Delta(W^{\varrho_n}_{n,A}) \cvproba 0.\]

\item\label{thm:GW_stable} If $\nu$ satisfies $\mathrm{(H_\alpha)}$ with $\alpha \in (1,2)$, then $a_n^{-2} \sum_{k \ge 1} k (k+1) J_k(W^{\varrho_n}_{n,A})$ converges in distribution to a random variable $Y_\alpha^\varrho$ whose law  does not depend on $\nu$.

\item\label{thm:GW_Cauchy_loc} If $\nu$ satisfies $\mathrm{(H_1^{loc})}$, recall that we take $A = \Z_{\ge -1}$, then
\[|b_n|^{-1} \Delta(W^{\varrho_n}_n) \cvproba 1
\quad\text{and}\quad
|b_n|^{-2} \sum_{k \le \Delta'(W^{\varrho_n}_n)} k (k+1) J_k(W^{\varrho_n}_n) \cvproba 0.\]

\item\label{thm:GW_Cauchy_gen} Similarly, if $\nu$ satisfies $\mathrm{(H_1)}$ and $\varrho_n = 1$, let $I$ be distributed as $P(I \ge x) = x^{-1}$ for every $x \ge 1$, then
\[|b_n|^{-1} \Delta(W_{\ge n}) \cvloi I
\quad\text{and}\quad
|b_n|^{-2} \sum_{k \le \Delta'(W_{\ge n})} k (k+1) J_k(W_{\ge n}) \cvproba 0.\]
\end{enumerate}
\end{thm}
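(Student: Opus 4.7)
All four statements reduce to controlling sums of squares of the jumps, via the algebraic identity
\[\sum_{k \ge 1} k(k+1) J_k(P) \;=\; \sum_{i=1}^{\zeta(P)} X_i^2 + S_{\zeta(P)},\]
valid for any stopped path $P$ with $\Z_{\ge -1}$-valued jumps $(X_i)$, since $k(k+1)=0$ for $k \in \{-1,0\}$. Under the conditioned laws in each part, $S_{\zeta}$ is either $-\varrho_n$ or $-1$, which is of lower order than the relevant scale $a_n^2$ or $|b_n|^2$. Everything therefore boils down to $\sum_i X_i^2$, and for parts~(iii)--(iv) to the same sum with the largest jump removed.

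For parts~(i) and~(ii), I would work with the cyclic-shift construction of $W^{\varrho_n}_{n,A}$, under which $\sum_i X_i^2$ is invariant, and then transfer to the bridge. In~(i), I truncate at level $K$: by Lemma~\ref{lem:scaling_GW} applied to $B = \{k\}$ for $k \le K$, the bulk $n^{-1}\sum_i X_i^2 \ind{X_i \le K}$ converges almost surely to $\nu(A)^{-1}\sum_{k \le K} k^2 \nu(k)$, while the tail's conditional expectation is controlled by a constant times $\sum_{k > K} k^2 \nu(k)$ via the LLT~\eqref{eq:LLT_GW}, and the finite variance assumption $\mathrm{(H_2)}$ makes this vanish as $K \to \infty$. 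Combined with $a_n^2 = n\sigma_\nu^2/2$ and $\sigma_\nu^2 = \sum_{k \ge 1} k(k+1) \nu(k)$, one recovers the limit $2/\nu(A)$. The bound $\Delta(W^{\varrho_n}_{n,A}) = o(a_n)$ follows from the classical criterion that $\max_i X_i = o(a_n)$ in probability under the unconditioned walk in the Gaussian domain of attraction, transferred to the conditioned law at polynomial cost by~\eqref{eq:LLT_GW}. For~(ii), the same truncation applies but the tail no longer vanishes: $X^2$ lies in the domain of attraction of a positive $\alpha/2$-stable law, so $a_n^{-2}\sum_{i=1}^{\lceil n/\nu(A)\rceil} X_i^2$ converges in distribution, and the two-fold conditioning singles out a biased version whose law $Y_\alpha^\varrho$ depends on $\varrho$ but is universal in $\nu$ (one identifies it as a functional of the $\alpha$-stable Lévy bridge).

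Parts~(iii) and~(iv) correspond to the condensation regime: a single exceptional jump of size $\asymp |b_n|$ absorbs the drift $b_n$ while the others stay negligible. For~(iv) I would directly invoke the main result of Kortchemski \& Richier~\cite{Kortchemski-Richier:Condensation_in_critical_Cauchy_Bienayme_Galton_Watson_trees}, which gives the joint convergence $|b_n|^{-1}(\Delta, \Delta') \cvloi (I, 0)$ with $I$ Pareto of index~$1$; the additional claim on the truncated sum of squares then follows from a decomposition isolating the exceptional jump and leaving a walk absolutely continuous (with bounded density) with respect to an unconditioned walk of length $n-1$, whose remaining sum of squares is controlled by a standard tail bound. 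For~(iii), the stronger assumption $\mathrm{(H_1^{loc})}$ allows me to carry out the analogous analysis by hand using the sharper LLT~\eqref{eq:LLT_GW_Cauchy_loc}: decomposing $W^{\varrho_n}_n$ according to the size $x$ of its largest jump and applying~\eqref{eq:LLT_GW_Cauchy_loc} to the remaining $n-1$ steps conditioned to first hit $-\varrho_n - x$, the conditional density in $x$ is maximised near $x = |b_n|$ and concentrates there, hence $|b_n|^{-1}\Delta \cvproba 1$ and $|b_n|^{-1}\Delta' \cvproba 0$.

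The main obstacle lies in part~(ii), where the two-dimensional conditioning $\{J_A = n\} \cap \{S = -\varrho_n\}$ has probability of order $a_n^{-1}$, the quadratic functional $\sum_i X_i^2$ fluctuates on scale $a_n^2$, and these three quantities are strongly correlated. I expect to resolve this via a multivariate local limit theorem applied to the $\R^2$-valued walk $(X_i, X_i^2)$, enriched by the indicator of $\{X_i \in A\}$, identifying the conditional limit $Y_\alpha^\varrho$ explicitly as a functional of the limiting multivariate stable bridge.
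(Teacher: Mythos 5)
Your reduction to $\sum_i X_i^2$ via the identity $\sum_k k(k+1)J_k = \sum_i X_i^2 + S_\zeta$ is correct and matches what the paper does implicitly, and the cyclic-shift/transfer-to-the-unconditioned-walk strategy for (i)--(ii) and the one-big-jump strategy for (iii)--(iv) are the right general shape. But the proposal has genuine gaps. The most serious is part~(ii), which you yourself label the main obstacle and leave as a plan: a multivariate local limit theorem for the pair $(X_i,X_i^2)$ (enriched by $\ind{X_i\in A}$) in the stable regime is not available off the shelf, and the two coordinates are so strongly dependent ($X_i^2$ is in the domain of attraction of an $\alpha/2$-stable law with index less than $1$) that this route is at best a hard open sub-project. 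The paper avoids it entirely: one first proves convergence of the rescaled conditioned path towards the stable bridge $\mathscr{X}^\varrho$ (adapting Kortchemski~\cite{Kortchemski:Invariance_principles_for_Galton_Watson_trees_conditioned_on_the_number_of_leaves} to the conditioning $\{J_A=n,\,S=-\varrho_n\}$), which gives convergence of the $N$ largest jumps; the contribution of all remaining jumps is then made uniformly small using the half-bridge absolute continuity~\eqref{eq:absolue_continuite_GW} together with time--space reversal and the unconditioned convergence of $a_n^{-2}\sum_i X_i^2$ to an $\alpha/2$-stable subordinator. Without either that path-convergence step or your hypothetical bivariate LLT, part~(ii) is simply not proved.

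There are two further steps that would fail as written. First, in part~(i) the hypothesis $\mathrm{(H_2)}$ includes infinite-variance laws with tail $n^{-2}L_1(n)$; your truncation at a fixed level $K$, the bound by $\sum_{k>K}k^2\nu(k)$, and the identification $a_n^2=n\sigma_\nu^2/2$ with $\sigma_\nu^2=\sum_k k(k+1)\nu(k)$ only make sense when the variance is finite. In general one must truncate at $\varepsilon a_n$ and work with the truncated variance $L(x)=\Var(X\ind{X\le x})$, using~\eqref{eq:ratio_fonctions_var_lente} and~\eqref{eq:constante_Levy} to get the constant $2/\nu(A)$; this is exactly the paper's computation. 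Likewise, transferring $\max_i X_i=o(a_n)$ to the conditioned law ``at polynomial cost by~\eqref{eq:LLT_GW}'' does not work: the unconditioned probability $n\P(X>\varepsilon a_n)$ is only $o(1)$ (e.g.\ of order $n^{-1/2}$ when $\P(X\ge x)\asymp x^{-3}$), so multiplying by the inverse probability $\asymp na_n/\varrho_n$ of the conditioning event destroys the bound; one needs an $O(1)$ comparison, which is what~\eqref{eq:absolue_continuite_GW} gives for events measurable with respect to the first half of the bridge, combined with time--space reversal for the second half. Second, in part~(iii) you propose to apply~\eqref{eq:LLT_GW_Cauchy_loc} to the remaining $n-1$ steps conditioned to first hit $-\varrho_n-x$ with $x\asymp|b_n|$, but that estimate is only stated (and only valid) for levels $O(a_n)=o(|b_n|)$; in the condensation window one needs the one-big-jump local estimates of Berger and of Kortchemski--Richier (their Proposition~20, which the paper extends from $\varrho_n=1$ to $\varrho_n=O(a_n)$), not~\eqref{eq:LLT_GW_Cauchy_loc} itself. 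Part~(iv) is essentially right in spirit and close to the paper's use of Theorems~6 and~27 of~\cite{Kortchemski-Richier:Condensation_in_critical_Cauchy_Bienayme_Galton_Watson_trees}, except that the piece after the big jump is not a walk of length $n-1$ but a walk run for a random number of steps of order $|b_n|$, and the bound $a_m=o(m)$ is what makes its sum of squares $o(|b_n|^2)$.
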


In the case $\varrho_n = 1$, when moreover $\nu$ has finite variance, Theorem~\ref{thm:GW}~\eqref{thm:GW_Gaussien} was first obtained by Broutin \& Marckert~\cite{BM14} for $A = \Z_{\ge -1}$ and generalised to any $A$ in~\cite{Mar18b}. The last two statements when $\alpha = 1$ shall follow easily from~\cite{KR19}. A key idea to prove the first two statements, when $\alpha > 1$, as in~\cite{BM14, Mar18b}, is first to observe that the claims are invariant under cyclic shift, so we may consider a random walk bridge instead of a first-passage bridge, and then to compare the law of these bridges with that of the unconditioned random walk $S$ for which the claims are easy to prove. 
Precisely, when $\nu$ satisfies $\mathrm{(H_\alpha)}$ for some $\alpha \in (1, 2]$ and $A \subset \Z_{\ge -1}$, let us denote for every real $t \ge 1$ by
\[J^-_{A, t}(S) = \inf\left\{k \ge 1 : J_A((S_i)_{i \le k}) = \lfloor t\rfloor\right\}\]
the instant at which the walk $S$ makes its $\lfloor t\rfloor$'th step in $A$. Then, as we discussed in the preceding subsection, the path $W^{\varrho_n}_{n,A}$ has the law of the Vervaat transform of the path $S$ up to time $J^-_{A, n}(S)$ conditioned to be at $-\varrho_n$ at this time. Further, from the Markov property applied to $S$ at time $J^-_{A, n/2}(S)$ and the local limit theorem, one can see that there exists a constant $C > 0$ such that for every $n \ge 1$ and every event $\mathscr{E}_{A, n/2}(S)$ which is measurable with respect to the $J^-_{A, n/2}(S)$ first steps of the path $S$, we have that
\begin{equation}\label{eq:absolue_continuite_GW}
\limsup_{n \to \infty} \Prc{\mathscr{E}_{A, n/2}(S)}{S_{J^-_{A, n}(S)} = -\varrho_n}
\le C \limsup_{n \to \infty} \Pr{\mathscr{E}_{A, n/2}(S)},
\end{equation}
see e.g. Kortchemski~\cite{Kor12}, Lemmas~6.10 and~6.11 and Equation~44 there.

We may now prove Theorem~\ref{thm:GW}. Let us start with the Gaussian regime $\alpha=2$. 

\begin{proof}[Proof of Theorem~\ref{thm:GW}~\eqref{thm:GW_Gaussien}]
According to the preceding discussion, it suffices to prove the convergences
\begin{equation}\label{eq:degres_GW_Gaussien}
\begin{gathered}
a_n^{-2} \sum_{k \ge 1} k (k+1) J_k\left((S_i)_{i \le J^-_{A, n}(S)})\right) \cvproba \frac{2}{\nu(A)},
\\
a_n^{-1} \Delta\left((S_i)_{i \le J^-_{A, n}(S)})\right) \cvproba 0,
\end{gathered}
\end{equation}
under the conditional probability $\P(\, \cdot \mid S_{J^-_{A, n}(S)} = -\varrho_n)$. 
Moreover, if we cut this bridge at time $J^-_{A, n/2}(S)$, then the time- and space-reversal of the second part has the same law as the first one. Therefore it suffices to prove that~\eqref{eq:degres_GW_Gaussien} holds when $n$ is replaced by $n/2$.
By~\eqref{eq:absolue_continuite_GW}, it suffices finally to prove that~\eqref{eq:degres_GW_Gaussien} holds for the unconditioned random walk.

Recall the two slowly varying functions $L$ and $L_1$, which are respectively given by $L_1(x) = x^2 \P(X \ge x)$ and $L(x) = \Var(X \ind{|X| \le x})$ for every $x > 0$; recall from~\eqref{eq:ratio_fonctions_var_lente} and~\eqref{eq:constante_Levy} that $L_1 / L$ converges to $0$ and $n a_n^{-2} L(a_n)$ converges to $2$. Then for every $\varepsilon > 0$, it holds that
\[\Pr{a_n^{-1} \max_{1 \le i \le n} X_i \ge \varepsilon}
\le n \Pr{X \ge \varepsilon a_n}
= n (\varepsilon a_n)^{-2} L_1(\varepsilon a_n)
\cv 0,\]
where we used the fact that $L_1$ is slowly varying, so $L_1(\varepsilon a_n) \sim L_1(a_n) = o(L(a_n))$. Concerning the first convergence, we aim at showing that $a_n^{-2} \sum_{1 \le i \le n} X_i (X_i+1)$ converges in probability to $2$, which is equivalent to the fact that $a_n^{-2} \sum_{1 \le i \le n} X_i^2$ converges in probability to $2$ since $n^{-1} \sum_{1 \le i \le n} X_i$ converges in probability to $0$ by the law of large numbers, and $n = O(a_n^2)$. Let us fix $\varepsilon > 0$, then we have that
\[\Es{a_n^{-2} \sum_{1 \le i \le n} X_i^2 \ind{|X_i| \le \varepsilon a_n}}
= n a_n^{-2} \Es{X^2 \ind{|X| \le \varepsilon a_n}}
= n a_n^{-2} L(a_n) (1+o(1))
,\]
which converges to $2$
and, similarly,
\[\Var\left(a_n^{-2} \sum_{1 \le i \le n} X_i^2 \ind{|X_i| \le \varepsilon a_n}\right)
= n a_n^{-4} \Var\left(X^2 \ind{|X| \le \varepsilon a_n}\right)
\le \varepsilon^2 n a_n^{-2} \Es{X^2 \ind{|X| \le \varepsilon a_n}}
\]
which converges to $2 \varepsilon^2$.
We have shown that with high probability, we have $|X_i| \le \varepsilon a_n$ for every $i \le n$, so we conclude that indeed $a_n^{-2} \sum_{1 \le i \le n} X_i^2$ converges in probability to $2$. We have thus shown that
\[a_n^{-2} \sum_{k \ge 1} k (k+1) J_k((S_i)_{i \le n}) \cvproba 2
\qquad\text{and}\qquad
a_n^{-1} \Delta((S_i)_{i \le n})) \cvproba 0,\]
for the unconditioned random walk, and so, since $a_{cn} \sim c^{1/2} a_n$,
\[a_n^{-2} \sum_{k \ge 1} k (k+1) J_k((S_i)_{i \le n / \nu(A)}) \cvproba \frac{2}{\nu(A)}
\quad\text{and}\quad
a_n^{-1} \Delta((S_i)_{i \le n / \nu(A)})) \cvproba 0.\]
Since $n^{-1} J^-_{A, n}(S)$ converges almost surely to $\nu(A)^{-1}$ by Lemma~\ref{lem:scaling_GW}, we obtain~\eqref{eq:degres_GW_Gaussien} for the unconditioned random walk and the proof is complete.
\end{proof}

We next consider the regime $1 < \alpha < 2$.

\begin{proof}[Proof of Theorem~\ref{thm:GW}~\eqref{thm:GW_stable}]
The claim for the unconditioned random walk $S$ is easy: let $\mathscr{X}$ be the $\alpha$-stable Lévy process whose law at time $1$ is $\mathscr{X}^{(\alpha)}$, then by a classical result on random walks, the convergence in distribution of $a_n^{-1} S_n$ towards $\mathscr{X}^{(\alpha)}$ is equivalent to that of $(a_n^{-1} S_{\lfloor n t\rfloor})_{t \ge 0}$ towards $(\mathscr{X}_t)_{t \ge 0}$ in the Skorokhod's $J_1$ topology. Let $\Delta \mathscr{X}_s = \mathscr{X}_s - \mathscr{X}_{s-} \ge 0$, then the sum $\mathscr{S}_t = \sum_{s \le t} (\Delta \mathscr{X}_s)^2$ is well defined; in fact it is well known that the process $(\mathscr{S}_t)_{t \ge 0}$ is an $\alpha/2$-stable subordinator, whose law can be easily derived from that of $\mathscr{X}$.
Furthermore it is easily seen to be the limit in distribution of $(a_n^{-2} \sum_{i \le \lfloor n t\rfloor} X_i (X_i + 1))_{t \ge 0}$; note that no centring is needed here since $\alpha/2 < 1$. A simple consequence of the fact that $\mathscr{S}$ is a pure jump process is that the non-increasing rearrangement of the vector $(a_n^{-2} X_i (X_i + 1))_{1 \le i \le n}$ converges in distribution in the $\ell^1$ topology towards the decreasing rearrangement of the nonzero jumps of $(\mathscr{S}_t)_{t \in [0,1]}$.

Let us return to our random bridges; one may adapt the arguments from~\cite{Kor12} when $\varrho_n = 1$ to obtain the convergence in distribution under $\P(\, \cdot \mid S_{J^-_{A, n}(S)} = -\varrho_n)$ of the paths $(\nu(A)^{1/\alpha} a_n^{-1} \sum_{1 \le i \le \lfloor J^-_{A, n}(S) t \rfloor} X_i)_{t \in [0,1]}$ towards the bridge $(\mathscr{X}^\varrho_t)_{t \in [0,1]}$ which is informally the process $(\mathscr{X}_t)_{t \in [0,1]}$ conditioned to be at $-\varrho$ at time $1$. 
This implies the convergence of the $N$ largest values amongst $(X_i (X_i + 1))_{1 \le i \le n}$ towards the $N$ largest values amongst $((\Delta \mathscr{X}^\varrho_t)^2)_{0 \le t \le 1}$ for every $N$; by~\eqref{eq:absolue_continuite_GW} and the preceding paragraph, if $N$ is chosen large enough, the sum of all the other jumps is small so we obtain under the law $\P(\, \cdot \mid S_{J^-_{A, n}(S)} = -\varrho_n)$,
\[\nu(A)^{2/\alpha} a_n^{-2} \sum_{1 \le i \le n} X_i (X_i + 1) \cvloi \sum_{t \in [0,1]} (\Delta \mathscr{X}^\varrho_t)^2.\]
We conclude as in the preceding proof from the space-time reversal invariance.
\end{proof}

We finally consider the Cauchy regime. Let us start with the local conditioning. Recall that here we assume that $A = \Z_{\ge -1}$, and $W^{\varrho_n}_n$ is simply the walk $S$ conditioned to first hit $-\varrho_n$ at time $n$.

\begin{proof}[Proof of Theorem~\ref{thm:GW}~\eqref{thm:GW_Cauchy_loc}]
In the case $\varrho_n = 1$, Kortchemski \& Richier~\cite[Theorem~3]{KR19} found the joint limit of the $N$ largest jumps of $W^1_n$ for any $N$ fixed: the largest one is equivalent to $|b_n|$, whereas the others are of order $a_n = o(|b_n|)$ which implies our first claim. This can be generalised to any $\varrho_n = O(a_n) = o(b_n)$; indeed, the key is Proposition~20 there which still applies when one takes the $X^{(n)}_i$'s to be the jumps of the path $S$ (denoted by $W$ there!) conditioned to $S_n = - \varrho_n$ instead of $S_n = -1$: the only feature of the case $\varrho_n = 1$ which is used is that $\P(S_n = -\varrho_n) \sim n\varrho_n\cdot \P(X = |b_n|)$, which remains valid as soon as $\varrho_n = O(a_n) = o(b_n)$ by~\eqref{eq:LLT_GW_Cauchy_loc}.

By this extension of~\cite[Proposition~20]{KR19} the following holds as $n \to \infty$: let $U_n = \inf\{i \le n : X_i = \max_{j \le n} X_j\}$ be the first time at which the path $S$ up to time $n$ makes its largest jump, then the law under $\P(\, \cdot \mid S_n = -\varrho_n)$ of the vector $(X_1, \dots, X_{U_n-1}, X_{U_n+1}, \dots, X_n)$ is close in total variation to $n-1$ i.d.d. copies of $X$.
From this, the proof of Theorem~21 from~\cite{KR19} can be extended to show that if we first run the walk $S$ up to time $n-1$ and then send it to $-\varrho_n$, and then we construct $Z^{\varrho_n}_n$ as the Vervaat transform of this path $(S_0, \dots, S_{n-1}, -\varrho_n)$, then
\begin{equation}\label{eq:one_big_jump_Cauchy_loc}
d_{TV}\left((W^{\varrho_n}_n(i))_{0 \le i \le n}, (Z^{\varrho_n}_n(i))_{0 \le i \le n}\right) \cv 0.
\end{equation}
Finally, from these results, the proof of Theorem~3 in~\cite{KR19} remains unchanged and our first claim follows.
Concerning the second claim, it is equivalent to showing that $|b_n|^{-2} \sum_{i \le n} X_i^2 \ind{i \ne U_n}$ converges to $0$ under $\P(\, \cdot \mid S_n = - \varrho_n)$. As in the case $1 < \alpha < 2$, we have that $a_n^{-2}(X_1^2 + \dots + X_n^2)$ converges in distribution towards some $1/2$-stable random variable. Our claim then follows by~\eqref{eq:one_big_jump_Cauchy_loc} since $a_n = o(|b_n|)$.
\end{proof}

It only remains to consider the tail conditioning. Recall that here $A = \Z_{\ge - 1}$ and $\varrho_n = 1$.

\begin{proof}[Proof of Theorem~\ref{thm:GW}~\eqref{thm:GW_Cauchy_gen}]
Compared to the previous proof, we do not need any adaption here and the first claim now directly follows from~\cite[Theorem~6]{KR19}.
The second claim is more subtle. First, we have a similar approximation to~\eqref{eq:one_big_jump_Cauchy_loc} given by~\cite[Theorem~27]{KR19}; what replaces $Z^{\varrho_n}_n$ there is the process $\vec{Z}^{(n)}$ defined as follows: first $I_n$ is the last weak ladder time of $(S_i)_{i \le n}$, then conditionally on $\{I_n = j\}$, the path $\vec{Z}^{(n)}$ consists in three independent parts:
\begin{enumerate}
\item First $(\vec{Z}^{(n)}_i)_{i <  j}$ has the law of $(S_i)_{i < j}$ conditioned to satisfy $\min_{i \le j} S_i \ge 0$.

\item Then we make a big jump $\vec{Z}^{(n)}_j - \vec{Z}^{(n)}_{j-1}$, sampled from $\P(\, \cdot \mid X \ge |b_n|)$.

\item Finally $(\vec{Z}^{(n)}_{j+i} - \vec{Z}^{(n)}_j)_{i \ge 0}$ continues as the unconditioned random walk $S$.
\end{enumerate}
The big jump will be excluded in our sum and we have seen previously that the sum of $N$ copies of $X^2$ is of order $a_N^2$ as $N \to \infty$. Therefore, if we consider only the jumps of $W_{\ge n}$ after its big jump, then there are less than $\zeta(W_{\ge n})$ of them, and $|b_n|^{-1} \zeta(W_{\ge n})$ converges in distribution to $I$ as $n \to \infty$ by~\cite[Proposition~3.1]{KR19}, so the sum of the square of these jumps is at most of order $a_{|b_n|}^2$. We recall that $a_n$ is defined by $n \P(X \ge a_n) \to 1$; since $\P(X \ge x) = x^{-1} L_1(x)$ and that $\E[|X|] < \infty$, then $L_1(x) \to 0$, therefore $a_n = o(n)$, from which we deduce that the sum of the square of the jumps of the last part is at most of order $a_{|b_n|}^2 = o(|b_n|^2)$. 
Finally, let us consider the jumps of the first part, up to time $I_n-1$. As in the preceding proof, the sum of the square of the first $I_n-1$ jumps of $\vec{Z}^{(n)}$ grows like $a_{I_n}^2 = o(|b_n|^2)$, and precisely this path converges after scaling to the meander given by a $1/2$-stable process conditioned to be nonnegative at least up to time $1$.
\end{proof}

\subsection{Boltzmann planar maps}
\label{sec:Boltzmann}

Let us briefly recall the definition of Boltzmann distributions on planar maps, as introduced by Marckert \& Miermont~\cite{MM07}. Let us also refer to~\cite{BM17} for more details.
For every $\varrho \ge 1$, let $\Map^{\varrho}$ be the set of all finite rooted bipartite maps $M$ with perimeter $2\varrho$, and let $\PMap^{\varrho}$ be the set of all such rooted and pointed maps $(M, x_\star)$. Recall that $\Map^1$ and $\PMap^1$ can be seen as the set of maps and pointed maps without boundary by gluing the two boundary edges, we then drop the exponent $1$.
Let us fix a sequence of nonnegative real numbers $\q = (q_i ; i \ge 0)$ which, in order to avoid trivialities, satisfies $q_i > 0$ for at least one $i \ge 2$; we define a measure $\mathbf{w}^{\varrho}$ on $\Map^{\varrho}$ by setting
\[\mathbf{w}^{\varrho}(M) = \prod_{f \text{ inner face}} q_{\mathrm{deg}(f)/2},
\qquad M \in \Map^{\varrho},\]
where $\mathrm{deg}(f)$ is the degree of the face $f$. We set $\mathbf{W}^{\varrho} = \mathbf{w}^{\varrho}(\Map^{\varrho})$. We define similarly a measure $\mathbf{w}^{\varrho}_\star$ on $\PMap^{\varrho}$ by $\mathbf{w}^{\varrho}_\star((M, x_\star)) = \mathbf{w}^{\varrho}(M)$ for every $(M, x_\star) \in \PMap^{\varrho}$ and we put $\mathbf{W}^{\varrho}_\star = \mathbf{w}^{\varrho}_\star(\PMap^{\varrho})$.
We say that $\q$ is \emph{admissible} when $\mathbf{W}_\star = \mathbf{W}_\star^1$ is finite; this seems stronger than requiring $\mathbf{W}^1$ to be finite, but it is not, see~\cite{BCM19}; moreover, this implies that $\mathbf{W}^\varrho_\star$ (and so $\mathbf{W}^\varrho$) is finite for any $\varrho \ge 1$.
If $\q$ is admissible, we set
\[\P^\varrho(\cdot) = \frac{1}{\mathbf{W}^\varrho} \mathbf{w}^\varrho(\cdot)
\qquad\text{and}\qquad
\P^{\varrho, \star}(\cdot) = \frac{1}{\mathbf{W}^\varrho_\star} \mathbf{w}^\varrho_\star(\cdot).\]
For every integers $n, \varrho \ge 1$, let $\Map^\varrho_{E=n}$, $\Map^\varrho_{V=n}$ and $\Map^\varrho_{F=n}$ be the subsets of $\Map$ of maps with respectively $n$ edges, $n+1$ vertices, and $n$ inner faces. More generally, for every $A \subset \N$, let $\Map_{(F,A)=n}$ be the subset of $\Map$ of maps with $n$ inner faces whose degree belongs to $2A$ (and possibly other faces, but with a degree in $2\N \setminus 2A$). For every $S \in \{E, V, F\} \cup \bigcup_{A \subset \N} \{(F,A)\}$ and every $n \ge 2$, we define
\[\P^\varrho_{S=n}(M) = \P^\varrho(M \mid M \in \Map^\varrho_{S=n}),
\qquad M \in \Map^\varrho_{S=n},\]
the law of a rooted Boltzmann map with perimeter $2 \varrho$ conditioned to have `size' $n$. We also let $\P_{E \ge n}$ be the law of a rooted Boltzmann map without boundary conditioned to have at least $n$ edges.
We define similarly the laws $\P^{\varrho, \star}_{S=n}$ on such pointed maps in $\PMap^\varrho_{S=n}$ and $\P^\star_{E \ge n}$ on $\bigcup_{k \ge n} \PMap_{E=k}$.

According to~\cite{MM07} (see~\cite{Mar18b} for a presentation closer to the present work), the admissibility of a sequence can be checked analytically: $\q$ is admissible if and only if the power series $g_\q : x \mapsto 1 + \sum_{k \ge 1} \binom{2k-1}{k-1} q_k x^k$ (which is convex and strictly increasing) possesses a fixed point $Z_\q$ such that $g_\q'(Z_\q)\le1$; in this case it holds that $Z_\q = (\mathbf{W}_\star+1)/2$. 
When $\q$ is admissible, the sequence
\[\mu_\q(k) = (Z_\q)^{k-1} \binom{2k-1}{k-1} q_k
\qquad (k \ge 0),
\]
where $\mu_\q(0)$ is understood as $(Z_\q)^{-1}$, thus defines a probability measure with mean smaller than or equal to one, and we say that $\q$ is \emph{critical} when $\mu_\q$ has mean exactly one.

Sample a random labelled forest $(T^\varrho, \ell)$ as follows: first $T^\varrho$ has the law of a Bienaymé--Galton--Watson forest with $\varrho$ trees and offspring distribution $\mu_\q$; this means that its {\L}ukasiewicz path has the law of the first-passage bridge $W^\varrho$ of the preceding subsections associated with the distribution $\nu_\q(\cdot) = \mu_\q(\cdot+1)$.
Then, conditionally on $T^\varrho$, the random labelling $\ell$ is obtained by sampling the label of the roots uniformly at random in $\mathscr{B}_{\varrho}^{\ge -1}$ as defined in~\eqref{eq:def_pont} and, independently, for every branchpoint with outdegree $k \ge 1$, the label increments between itself and its offspring are sampled uniformly at random in $\mathscr{B}_{k}^{\ge -1}$.

Then we first construct a negative pointed map from this forest as discussed in Section~\ref{sec:bijection}, and then we re-root it at one of the $2\varrho$ edges along the boundary (keeping the external face to the right) chosen uniformly at random, then this new pointed map has the law $\P^{\varrho, \star}$. This can be shown in the case $\varrho = 1$ by adapting the arguments from~\cite{MM07} which rely on the coding of~\cite{BDG04}, and the generalisation to $\varrho \ge 2$ can be obtained similarly, with a straightforward analogue of \cite[Lemma~10]{BM17} for the re-rooting along the boundary.

Moreover, the pointed map has the law $\P^{\varrho, \star}_{S=n}$ when the forest has the law of such a Bienaymé--Galton--Watson forest conditioned to have $n$ vertices with outdegree in the set $B_S \subset \Z_+$ given by
\[B_E = \Z_+,
\qquad
B_V = \{0\},
\qquad
B_F = \N
\qquad\text{and}\qquad
B_{(F,A)} = A.\]
We may therefore rely on the preceding sections to obtain information about (pointed) Boltzmann maps. 

We let $\nu_\q(k) = \mu_\q(k+1)$ for every $k \ge -1$ which is centred if and only if $\q$ is critical. Let the sequences $(a_n)_{n\ge 1}$ and $(b_n)_{n\ge 1}$ be as in Section~\ref{sec:Luka_stable}: when $\alpha>1$, the sequence $(n^{-1/\alpha} a_n)_{n\ge 1}$ 
is slowly varying and $a_n^{-1} (X_1 + \dots + X_n)$ converges in distribution to an $\alpha$-stable random variable, where the $X_i$'s are i.i.d. with law $\nu_\q$; when $\alpha=1$, both $(n^{-1} a_n)_{n \ge 1}$ and $(n^{-1} b_n)_{n \ge 1}$ are slowly varying, with $b_n \to -\infty$ and $b_n / a_n \to -\infty$, and now $a_n^{-1} (X_1 + \dots + X_n - b_n)$ converges in distribution.

\begin{thm}\label{thm:Boltzmann}
Assume that $\q$ is an admissible and critical sequence such that the law $\nu_\q$ satisfies $\mathrm{(H_\alpha)}$ for some $\alpha \in [1, 2]$ or $\mathrm{(H_1^{loc})}$. Let $(\varrho_n)_{n\ge1}$ be such that $\limsup_{n \to \infty} a_n^{-1} \varrho_n < \infty$.
\begin{enumerate}
\item\label{thm:Boltzmann_stable} Fix $S \in \{E, V, F\} \cup \bigcup_{A \subset \N} \{(F,A)\}$.
Suppose that $\nu_\q$ satisfies $\mathrm{(H_\alpha)}$ with $\alpha \in (1, 2]$. For every $n \ge 1$, let $M^{\varrho_n}_{S=n}$ have the law $\P^{\varrho_n}_{S=n}$. Then from every increasing sequence of integers, one can extract a subsequence along which the spaces
\[\left(V(M^{\varrho_n}_{S=n}), a_n^{-1/2} \dgr, \pgr\right)_{n \ge 1}\]
converge in distribution in the Gromov--Hausdorff--Prokhorov topology to a limit with a nonzero diameter.

\item\label{thm:Boltzmann_Gaussien} Suppose furthermore that $\alpha = 2$; let $c_{\q, S} = (\mu_\q(B_S) / 2)^{1/2}$ and suppose that there exists $\varrho \in [0, \infty)$ such that $\lim_{n \to \infty} a_n^{-1} \varrho_n = \varrho / c_{\q, S}$, then the convergence in distribution
\[\left(V(M^{\varrho_n}_{S=n}) \left(\frac{3 c_{\q, S}}{2 a_n}\right)^{1/2} \dgr, \pgr\right) \cvloi (\Bmap^\varrho, \dBmap^\varrho, \pBmap^\varrho)\]
holds in the Gromov--Hausdorff--Prokhorov topology, where $\Bmap^0$ is the Brownian sphere and $\Bmap^\varrho$ is the Brownian disk with perimeter $\varrho$ if $\varrho>0$.

\item\label{thm:Boltzmann_Cauchy_loc} Suppose that $\nu_\q$ satisfies $\mathrm{(H_1^{loc})}$. For every $n \ge 1$, let $M^{\varrho_n}_{E=n}$ have the law $\P^{\varrho_n}_{E=n}$. Then the convergence in distribution
\[\left(V(M^{\varrho_n}_{E=n}), |2b_n|^{-1/2} \dgr, \pgr\right) \cvloi (\CRT_{X^0}, \dCRT_{X^0}, \pCRT_{X^0})\]
holds in the Gromov--Hausdorff--Prokhorov topology, where $X^0$ is the standard Brownian excursion. 

\item\label{thm:Boltzmann_Cauchy_gen} Suppose that $\nu_\q$ satisfies $\mathrm{(H_1)}$. For every $n \ge 1$, let $M_{E \ge n}$ have the law $\P_{E \ge n}$. Then the convergence in distribution
\[\left(V(M_{E \ge n}), |2b_n|^{-1/2} \dgr, \pgr\right) \cvloi (\CRT_{I^{1/2} X^0}, \dCRT_{I^{1/2} X^0}, \pCRT_{I^{1/2} X^0})\]
holds in the Gromov--Hausdorff--Prokhorov topology, where $X^0$ is the standard Brownian excursion, and $I$ is independently sampled from the law $\P(I \ge x) = x^{-1}$ for all $x \ge 1$. 
\end{enumerate}
Finally, the same results hold when the maps are obtained by forgetting the distinguished vertex in the pointed versions of the laws.
\end{thm}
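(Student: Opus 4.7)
Our overall strategy relies on the bijection of Section~\ref{sec:bijection}: conditionally on its face-degree sequence $d_n$ and boundary-length $2\varrho_n$, the Boltzmann map $M^{\varrho_n}_{S=n}$ (or its pointed version) is uniformly distributed in $\Mn$ (respectively $\PMn$). It therefore suffices to verify in probability the assumptions of Theorems~\ref{thm:convergence_carte_disque} and~\ref{thm:convergence_cartes_CRT} on $(\sigma_n, \varrho_n, \Delta_n)$. Via the BGW encoding of Section~\ref{sec:Boltzmann}, a jump of size $j$ in the Łukasiewicz path of the associated forest corresponds to an inner face of degree $2(j+1)$, so that $\sigma_n^2 = \sum_{j \ge 1} j(j+1) J_j$ and $\Delta_n - 1$ is the largest jump; Theorem~\ref{thm:GW} then delivers exactly the required asymptotics. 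The unpointed case will follow from the pointed one, since the Gromov--Hausdorff--Prokhorov distance between $V(\mn)$ and $V(\mn) \setminus \{\star\}$ is at most $1$, which vanishes after rescaling.

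For (i), Theorem~\ref{thm:GW}\ref{thm:GW_stable} shows that $\sigma_n / a_n$ is tight in $(0, \infty)$; combined with $\varrho_n = O(a_n)$, tightness at scale $a_n^{-1/2}$ follows from the tightness at scale $(\sigma_n + \varrho_n)^{-1/2}$ established in~\cite{Marzouk:On_the_growth_of_random_planar_maps_with_a_prescribed_degree_sequence}. For (ii), Theorem~\ref{thm:GW}\ref{thm:GW_Gaussien} further yields $\sigma_n / a_n \cvproba c_{\q,S}^{-1}$ (hence $\varrho_n / \sigma_n \cvproba \varrho$) together with $\Delta_n = o(a_n) = o(\sigma_n)$, so Theorem~\ref{thm:convergence_carte_disque} applies at scale $(3 c_{\q,S} / (2 a_n))^{1/2}$, giving the announced Brownian disk limit.

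The Cauchy regime is where the main difficulty lies. By Theorem~\ref{thm:GW}\ref{thm:GW_Cauchy_loc}--\ref{thm:GW_Cauchy_gen}, the Łukasiewicz path has a \emph{unique} jump of size $\approx |b_n|$ in case (iii), and of size $\approx I |b_n|$ in case (iv), while the other jumps together contribute $o(|b_n|^2)$ to $\sigma_n^2$; the forest therefore encodes a single inner face of macroscopic half-degree $\varrho'_n$, comparable to $|b_n|$, which dominates $\sigma_n^2$. Thus $\varrho_n / \sigma_n \to 0$ rather than $\infty$, and Theorem~\ref{thm:convergence_cartes_CRT} does not apply directly. The key step is to re-root the map so that this giant face becomes the external one, by picking a new root-edge uniformly among the $2\varrho'_n$ edges of the giant face; the resulting map has the same underlying graph and graph distance, boundary-length $2\varrho'_n$, and an extra inner face of degree $2 \varrho_n$ replacing the old boundary. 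The new variance $(\sigma'_n)^2 = \sigma_n^2 - \varrho'_n (\varrho'_n - 1) + \varrho_n(\varrho_n - 1)$ reduces to the non-giant contribution plus $O(\varrho_n^2)$, which is $o(|b_n|^2) = o((\varrho'_n)^2)$, so that $\varrho'_n \gg \sigma'_n$. The hard part is verifying that, conditionally on $(d'_n, \varrho'_n)$ (defined on the high-probability event that the giant face is unique), the re-rooted pointed map is uniformly distributed in $\PMap^{\varrho'_n}_{d'_n}$; this follows from a combinatorial check on the bijection together with the uniform choice of the new root-edge. Theorem~\ref{thm:convergence_cartes_CRT} then applies pathwise and yields convergence to $\CRT_{X^0}$ at scale $(2\varrho'_n)^{-1/2}$. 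In case (iii), $\varrho'_n / |b_n| \cvproba 1$ so rescaling by $|2 b_n|^{-1/2}$ is equivalent. In case (iv), $\varrho'_n / |b_n| \cvloi I$, and the change of scale from $(2 \varrho'_n)^{-1/2}$ to $|2 b_n|^{-1/2}$ dilates the limiting $\CRT_{X^0}$ by the factor $I^{1/2}$, producing $\CRT_{I^{1/2} X^0}$; dominated convergence upgrades this pathwise convergence to the claimed convergence in distribution.
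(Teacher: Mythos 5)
Your overall route is the paper's: condition on the face-degree sequence, under which the map is uniform with prescribed degrees, use Theorem~\ref{thm:GW} to control $(\sigma_n,\Delta_n)$ (with $\sigma_n^2=\sum_j j(j+1)J_j$ as you say), and then invoke the tightness result of the companion paper for~\ref{thm:Boltzmann_stable} and Theorem~\ref{thm:convergence_carte_disque} for~\ref{thm:Boltzmann_Gaussien}. For the Cauchy cases you correctly note that Theorem~\ref{thm:convergence_cartes_CRT} does not apply with the original boundary, since the unique giant face forces $\sigma_n\sim|b_n|\gg\varrho_n$; your re-rooting of the map so that the giant face becomes the external one is the natural reduction (the paper's one-line deduction of~\ref{thm:Boltzmann_Cauchy_loc} and~\ref{thm:Boltzmann_Cauchy_gen} leaves this step implicit). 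The uniformity claim you flag as ``the hard part'' is in fact routine: conditionally on the degrees, a uniform map rooted on the old boundary together with an independent uniform oriented edge on the (w.h.p.\ unique) face of maximal degree is uniform among doubly-marked maps, and exchanging the roles of the two marked oriented edges is a bijection, so the re-rooted map is exactly uniform with the giant face as boundary; your computation of $(\sigma_n')^2$ and the scale change producing $\CRT_{I^{1/2}X^0}$ in case~\ref{thm:Boltzmann_Cauchy_gen} are correct.

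The genuine gap is your treatment of the un-pointed laws. Your only remark --- that the Gromov--Hausdorff--Prokhorov distance between $V(\mn)$ and $V(\mn)\setminus\{\star\}$ is at most $1$ --- handles the removal of the distinguished vertex from the \emph{metric space}, not the change of \emph{law}. The bijection with labelled forests, and hence the description of the random degree sequence by the conditioned walks of Theorem~\ref{thm:GW}, pertains to the pointed laws $\P^{\varrho_n,\star}_{S=n}$; forgetting the distinguished vertex yields the push-forward $\phi_*\P^{\varrho_n,\star}_{S=n}$, which is the law $\P^{\varrho_n}_{S=n}$ biased by the number of vertices. Since the main statements~\ref{thm:Boltzmann_stable}--\ref{thm:Boltzmann_Cauchy_gen} concern $\P^{\varrho_n}_{S=n}$ itself, your argument as written only proves the assertions of the final sentence of the theorem. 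One must show that the bias is asymptotically negligible; the paper does this in Proposition~\ref{prop:biais_cartes_Boltzmann_pointees}, proving $\|\P^{\varrho_n}_{S=n}-\phi_*\P^{\varrho_n,\star}_{S=n}\|_{TV}\to0$ via the exponential concentration of the number of leaves established in~\eqref{eq:concentration_feuilles} (the number of vertices being a function of the degree sequence, this also transfers the degree asymptotics to the un-pointed model). Some de-biasing step of this kind is needed to complete your proof.
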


For maps without boundary, the case where $\nu_\q$ satisfies $\mathrm{(H_\alpha)}$ with $\alpha \in (1, 2)$ was first introduced by Le~Gall \& Miermont~\cite{LGM11} in a more restricted case, where $\nu_\q(n) \sim \mathrm{Cst}\cdot n^{-1-\alpha}$, and extended to $\mathrm{(H_\alpha)}$ in~\cite{Mar18a}. More is known for this model than just the above statement; in particular the height and label process of the associated labelled tree converge in distribution. This could be extended to maps with a boundary in a similar way.
Le~Gall \& Miermont~\cite{LGM11} also proved that the subsequential limits all have Hausdorff dimension $2\alpha$ and showing that these limits all have the same topology and further the same distribution (at $\alpha$ fixed) is  the object of current investigation.

\begin{proof}
The statements for random pointed maps follow easily from Theorem~\ref{thm:GW} and the results from the introduction. Indeed the statement~\eqref{thm:Boltzmann_stable} for $\alpha \in (1, 2)$ follows from Theorem~\ref{thm:tension_cartes} and Theorem~\ref{thm:GW}~\eqref{thm:GW_stable}: According to the latter, the factor $\sigma_n^{1/2}$ in Theorem~\ref{thm:tension_cartes} for the random degree sequence of $M^{\varrho_n}_{S=n}$ is of order $a_n^{1/2}$; since furthermore conditionally on these degrees the map has the uniform distribution on the set of all possible maps, we may then apply Theorem~\ref{thm:tension_cartes} conditionally on the degrees, with $a_n^{1/2}$ instead of $\sigma_n^{1/2}$, and then average with respect to the degrees.
The statement~\eqref{thm:Boltzmann_Gaussien} for $\alpha = 2$ follows similarly from Theorem~\ref{thm:convergence_carte_disque} and Theorem~\ref{thm:GW}~\eqref{thm:GW_Gaussien}; here the scaling constant $(3 / (2 \sigma_n))^{1/2}$ is given by $((9 \mu_\q(B_S))/(8 a_n^2))^{1/4}$.
The statements~\eqref{thm:Boltzmann_Cauchy_loc} and~\eqref{thm:Boltzmann_Cauchy_gen} finally follow from Theorem~\ref{thm:convergence_cartes_CRT} and Theorems~\ref{thm:GW}~\eqref{thm:GW_Cauchy_loc} and~\ref{thm:GW}~\eqref{thm:GW_Cauchy_gen} respectively.

In the next proposition, we compare pointed and non-pointed maps, which allows to transfer these results to non-pointed maps.
\end{proof}

Let $\phi : \PMap \to \Map$ be the projection $\phi((M, x_\star)) = M$ which `forgets the distinguished vertex'. We stress that, unless in the case $S = V$ for which there is no bias, the law $\P^\varrho_{S=n}$ and the push-forward $\phi_\ast \P^{\varrho, \star}_{S=n}$ differ at $n$ fixed. Nonetheless, this bias disappears at the limit as shown in several works~\cite{Abr16, BJM14, BM17, Mar18b, Mar18a}.

\begin{prop}\label{prop:biais_cartes_Boltzmann_pointees}
Under the assumption of Theorem~\ref{thm:Boltzmann} \eqref{thm:Boltzmann_stable}, \eqref{thm:Boltzmann_Cauchy_loc}, and \eqref{thm:Boltzmann_Cauchy_gen} respectively, each corresponding total variation distance
\[\left\|\P^\varrho_{S=n} - \phi_* \P^{\varrho, \star}_{S=n}\right\|_{TV},
\qquad
\left\|\P^\varrho_{E=n} - \phi_* \P^{\varrho, \star}_{E=n}\right\|_{TV},
\quad\text{and}\quad
\left\|\P_{E\ge n} - \phi_* \P^{\star}_{E\ge n}\right\|_{TV},\]
converges to $0$ as $n \to \infty$.
\end{prop}

\begin{proof}
This result generalises Proposition~12 in~\cite{Mar18b} whose proof extends readily here: for notational convenience, suppose that we are in the first case of the theorem. Let $\Lambda(T^{\varrho_n}_{n,B_S})$ denote the number of leaves of the forest $T^{\varrho_n}_{n,B_S}$, which equals the number of vertices minus one in the associated map. Then the above total variation distance is bounded above by
\[\Es{\left|\frac{1}{\Lambda(T^{\varrho_n}_{n,B_S})+1} \frac{1}{\E[\frac{1}{\Lambda(T^{\varrho_n}_{n,B_S})+1}]} - 1\right|}.\]
see e.g. the proof of Proposition~12 in~\cite{Mar18b}. It thus suffices to prove that this expectation tends to $0$, which is the content of~\cite[Lemma~8]{Mar18b}. The proof of this lemma is fairly general once we have the exponential concentration of the proportion of leaves we obtained in the proof of Lemma~\ref{lem:scaling_GW}: take the sets $A$ and $B$ in~\eqref{eq:concentration_feuilles} to be respectively $B_S$ and $\{-1\}$ here. We refer to~\cite{Mar18b} for details.
\end{proof}

\subsection{Subcritical maps}
\label{sec:Boltzmann_sous_crit}

Let us end this paper with a condensation result similar to Theorem~\ref{thm:Boltzmann}~\eqref{thm:Boltzmann_Cauchy_loc} and~\eqref{thm:Boltzmann_Cauchy_gen} for \emph{subcritical} maps, that is, when the weight sequence $\q$ is so that the law $\mu_\q$ has mean 
\[m_\q = \sum_{k \ge 0} k \mu_\q(k) < 1.\]
In order to directly use the results available in the literature, we shall only work with maps without boundary, so $\varrho_n = 1$. The first convergence below, for pointed maps, is the main result of Janson \& Stef{\'a}nsson~\cite{JS15}.

\begin{thm}\label{thm:Boltzmann_sous_crit}
Assume that $\q$ is an admissible sequence such that $m_\q < 1$.

\begin{enumerate}
\item Suppose that there exists a slowly varying function $L$ and an index $\beta > 1$ such that we have
\[\mu_\q(k) = k^{-(1+\beta)} L(k),\qquad k \ge 1.\]
For every $n \ge 1$, let $M_{E=n}$ have the law $\P_{E=n}$. Then the convergence in distribution
\[\left(V(M_{E=n}), (2 n)^{-1/2} \dgr, \pgr\right) \cvloi (\CRT_{\gamma^{1/2} X^0}, \dCRT_{\gamma^{1/2} X^0}, \pCRT_{\gamma^{1/2} X^0})\]
holds in the Gromov--Hausdorff--Prokhorov topology, where $X^0$ is the standard Brownian excursion and $\gamma = 1 - m_\q$. 

\item Under the weaker assumption that there exists a slowly varying function $L$ and an index $\beta > 1$ such that
\[\mu_\q([k, \infty)) = k^{-\beta} L(k),\qquad k \ge 1,\]
if now $M_{E \ge n}$ has the law $\P_{E \ge n}$, then the convergence in distribution
\[\left(V(M_{E \ge n}), (2 n)^{-1/2} \dgr, \pgr\right) \cvloi (\CRT_{I^{1/2} X^0}, \dCRT_{I^{1/2} X^0}, \pCRT_{I^{1/2} X^0})\]
holds in the Gromov--Hausdorff--Prokhorov topology, where $X^0$ is the standard Brownian excursion, and $I$ is independently sampled from the law $\P(I \ge x) = (\frac{1-m_\q}{x})^{\beta}$ for all $x \ge 1-m_\q$. 
\end{enumerate}

Finally, the same results hold when the maps are obtained by forgetting the distinguished vertex in the pointed versions of the laws.
\end{thm}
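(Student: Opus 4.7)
For the first statement (pointed maps with $E = n$), I would invoke directly the main theorem of Janson \& Stef{\'a}nsson~\cite{Janson-Stefansson:Scaling_limits_of_random_planar_maps_with_a_unique_large_face}, which is exactly this convergence for pointed maps. The de-pointing (to unpointed maps) follows from an analogue of Proposition~\ref{prop:biais_cartes_Boltzmann_pointees}; the key input is the exponential concentration of the proportion of leaves (Lemma~\ref{lem:scaling_GW}), which adapts straightforwardly to the sub-critical size-conditioning because the condensation phenomenon localises the atypical structure in a single vertex and the remainder is a well-behaved sub-critical tree.

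For the second, genuinely new statement (pointed maps with $E \ge n$), my plan is to mimic the strategy of Theorem~\ref{thm:Boltzmann}\ref{thm:Boltzmann_Cauchy_gen}. Using the bijection of Section~\ref{sec:bijection}, I would encode the pointed map under $\P^{1,\star}_{E \ge n}$ by a labelled tree $(T, \ell)$ where $T$ is a BGW tree with offspring law $\mu_\q$ conditioned on $|T| \ge n + 1$ (since $|T| = E + 1$ by Euler's formula). Under the tail assumption $\mu_\q([k, \infty)) \sim k^{-\beta} L(k)$ with $\beta > 1$, the sub-critical condensation theory (in the spirit of~\cite{Kortchemski-Richier:Condensation_in_critical_Cauchy_Bienayme_Galton_Watson_trees} and Theorem~\ref{thm:GW}\ref{thm:GW_Cauchy_gen} of the present paper) gives a unique giant vertex $u^* \in T$ of out-degree $\sim I n$, where $I$ has the Pareto-type law stated in the theorem. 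The remaining vertices contribute $o(n^2)$ to $\sigma^2 = \sum_{k \ge 1} k(k-1) d_n(k)$: since $\mu_\q$ has mean $m_\mu < \infty$, the non-giant degree mass is $O(n)$, and the regularly-varying tail yields a quadratic sum of order at most $n^{\max(1, 2/\beta)} = o(n^2)$. Consequently the map has a unique face $\Phi^*$ of degree $2 I n (1 + o(1))$. I would then re-root at a uniformly chosen oriented edge of $\Phi^*$, promoting $\Phi^*$ to the external face; this yields an element of $\Mn$ with half-boundary $\varrho_n = (\deg \Phi^*)/2 \sim I n$ and non-giant face-degree sequence $d_n$ for which $\varrho_n / \sigma_n \to \infty$ in probability. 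Conditionally on $(\varrho_n, d_n, I)$, Theorem~\ref{thm:convergence_cartes_CRT} then applies and gives
\[
\bigl(V(\mn), (2 \varrho_n)^{-1/2} \dgr, \pgr\bigr) \cvloi (\CRT_{X^0}, \dCRT_{X^0}, \pCRT_{X^0}),
\]
and combining with the identity $(2n)^{-1/2} = (\varrho_n/n)^{1/2} (2 \varrho_n)^{-1/2}$ and the convergence $(\varrho_n/n)^{1/2} \to I^{1/2}$ yields the stated limit $\CRT_{I^{1/2} X^0}$ with $I$ independent of $X^0$.

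The main obstacle will be the re-rooting step: the bijection in Section~\ref{sec:bijection} identifies a map with a specified root-face, so to feed the re-rooted map into Theorem~\ref{thm:convergence_cartes_CRT} one must verify that re-rooting at a uniformly chosen oriented edge of $\Phi^*$ produces an object whose conditional law, given its face-degree multiset and half-boundary, is uniform in $\PMn$. This is essentially a combinatorial symmetrisation (analogous to the positive/negative rooting discussion of Section~\ref{sec:bijection}) combined with the fact that the Boltzmann weight depends only on face degrees; but the asymptotic identification of the giant face introduces a small total-variation error that one must control. Once this re-rooting is in hand, the remainder of the argument is a direct application of Theorem~\ref{thm:convergence_cartes_CRT} together with standard tail-conditioning limits for the {\L}ukasiewicz walk encoding $T$.
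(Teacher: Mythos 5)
Your overall strategy is the same as the paper's: reduce to the condensation of the face-degree sequence (a unique macroscopic face, all other faces contributing $o(n^2)$ to $\sigma_n^2$) and then apply Theorem~\ref{thm:convergence_cartes_CRT} conditionally on the degrees, with the de-pointing handled by an analogue of Proposition~\ref{prop:biais_cartes_Boltzmann_pointees} resting on the exponential concentration~\eqref{eq:concentration_feuilles}. The differences are worth recording. For part (i) you short-circuit by citing the main theorem of~\cite{Janson-Stefansson:Scaling_limits_of_random_planar_maps_with_a_unique_large_face}, which is legitimate (the paper itself notes that the pointed case of (i) is precisely their result), whereas the paper re-derives it through Kortchemski's condensation theorem~\cite[Theorem~1]{Kortchemski-Limit_theorems_for_conditioned_non_generic_Galton_Watson_trees}, which gives $n^{-1}(\Delta,\Delta')\to(1-m_\mu,0)$ and hence the required degree asymptotics. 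For part (ii) the precise input is Proposition~9 of~\cite{Kortchemski-Richier:The_boundary_of_random_planar_maps_via_looptrees} (not the critical-Cauchy paper you cite ``in the spirit of''), which yields $n^{-1}(\Delta,\Delta')\to(I,0)$ with the stated Pareto law; your $o(n^2)$ bound on the non-giant quadratic sum is correct. Your explicit re-rooting at the giant face makes precise a step the paper leaves implicit when it writes that Theorem~\ref{thm:convergence_cartes_CRT} ``allows us to conclude''; it is in fact unproblematic: conditionally on the face-degree multiset and on the uniqueness of the macroscopic face (an event of probability tending to one, which controls the total-variation error you worry about), re-rooting a uniform rooted pointed map at a uniformly chosen oriented edge having the giant face on its right produces a uniform element of the corresponding $\PMn$, and the metric measure space $(V(M),\dgr,\pgr)$ is unchanged by re-rooting, so nothing is lost. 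Two small omissions: you do not address the de-pointing for part (ii) (the same argument applies, but the concentration bound~\eqref{eq:concentration_feuilles} there requires the polynomial lower bounds on $\Pr{\zeta(S)=n}$, respectively $\Pr{\zeta(S)\ge n}$, supplied by~\cite[Equation~14]{Kortchemski-Limit_theorems_for_conditioned_non_generic_Galton_Watson_trees} and~\cite[Equation~9]{Kortchemski-Richier:The_boundary_of_random_planar_maps_via_looptrees}); and your justification of the exponential concentration in part (i) (``the remainder is a well-behaved sub-critical tree'') should likewise be routed through these local-limit estimates for the conditioning event, exactly as in the proof of Lemma~\ref{lem:scaling_GW}.
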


\begin{proof}
The proof goes exactly as that of Theorem~\ref{thm:Boltzmann} in the case $\alpha = 1$. First, the analogue of Theorem~\ref{thm:GW}~\eqref{thm:GW_Cauchy_loc} is provided by Kortchemski~\cite{Kor15}: according to Theorem~1 there, with the notation of Theorem~\ref{thm:GW} here, we have in the first case,
\[n^{-1} \left(\Delta(W_n), \Delta'(W_n)\right) \cvproba (1-m_\q,0).\]
This implies that $n^{-2} \sum_{k \le \Delta'(W_n)} k (k+1) J_k(W_n) \le n^{-1} (1+\Delta'(W_n))^2$ converges in probability to $0$. The analogue of Theorem~\ref{thm:GW}~\eqref{thm:GW_Cauchy_gen} in the second case is provided by Kortchemski \& Richier~\cite{KR20}: the convergence in distribution
\[n^{-1} \left(\Delta(W_n), \Delta'(W_n)\right) \cvloi (I,0)\]
follows from Proposition~9 there, and again this implies that $n^{-2} \sum_{k \le \Delta'(W_n)} k (k+1) J_k(W_n)$ converges in probability to $0$. Theorem~\ref{thm:convergence_cartes_CRT} allows us to conclude in the case of pointed maps.

In order to consider non-pointed maps, we need an analogous result to Proposition~\ref{prop:biais_cartes_Boltzmann_pointees}, which holds as soon as the proportion of leaves in the tree coded by $W_n$ or $W_{\ge n}$ satisfies an exponential concentration as in~\eqref{eq:concentration_feuilles}. The proof is easily adapted: we only need the analogue of Equations~\eqref{eq:LLT_GW_Cauchy_gen} and~\eqref{eq:LLT_GW_Cauchy_loc} with $\varrho_n=1$ on the hitting time of $-1$; the former, in the case of the tail conditioning, is given by~\cite[Equation~9]{KR20} and references therein, and the latter, in the case of the local conditioning, is given by~\cite[Equation~14]{Kor15}. 
\end{proof}

\appendix
\section{Proof of the spinal decomposition}
\label{sec:preuve_epine}

In this appendix we prove Lemma~\ref{lem:multi_epines_sans_remise} on the geometry of random forests spanned by $q \ge 1$ random vertices. We start with the simpler case $q=1$, the case $q \ge 2$ is very similar but requires more notation.

\subsection{Proof of the one-point decomposition}

Let us first consider the simpler case of a single random vertex. In this case, we have $b=0$ and $c=1$ so the upper bound reads simply
\[\frac{\varrho_n + \sum_{i=1}^h (k_i - 1)}{\vertices_n}
\cdot \Pr{\bigcap_{i \le h} \left\{(\xi_{d_n}(i), \chi_{d_n}(i)) = (k_i, j_i)\right\}}.\]

\begin{proof}[Proof of Lemma~\ref{lem:multi_epines_sans_remise} for $q = 1$]
The proof relies on the fact that there is a one-to-one correspondence between a plane forest with $\varrho$ trees and a distinguished vertex $x$ on the one hand, and on the other hand the triplet given by the knowledge of which of the $\varrho$ trees contains $x$, the vector $\Cont(x)$, and the plane forest obtained by removing all the strict ancestors of $x$; note that this forest contains $\varrho + \sum_{0 \le i < |x|} (k_{a_i(x)} - 1)$ trees.
Recall that for any sequence $\theta = (\theta_\ell)_{\ell \ge 0}$ of nonnegative integers with finite sum $|\theta|$, the number of plane forests having exactly $\theta_\ell$ vertices with $\ell$ offspring for every $\ell \ge 0$ is given by
\[\# \ensembles{F}(\theta) = \frac{r}{|\theta|} \binom{|\theta|}{(\theta_\ell)_{\ell \ge 0}},\]
where $r= \sum_{\ell \ge 0} (1-\ell) \theta_\ell$ is the number of roots.
Fix $(k_i, j_i)_{i=1}^h$ a sequence of positive integers such that $1 \le j_i \le k_i$ for each $i$. For every $\ell \ge 1$, let $m_\ell = \#\{1 \le i \le h : k_i = \ell\}$ and assume that $m_\ell \le d_n(\ell)$; let also $m_0 = 0$ and $\m = (m_\ell)_{\ell \ge 0}$. 
By the preceding bijection, if $x_n$ is a vertex chosen uniformly at random, then we have
\begin{align*}
\Pr{\Cont(x_n) = (k_i, j_i)_{i=1}^h}
&= \frac{\varrho_n \#\ensembles{F}(d_n - \m)}{\vertices_n \#\ensembles{F}(d_n)}
\\
&= \frac{\varrho_n \frac{\varrho_n + \sum_{i=1}^h (k_i - 1)}{\vertices_n-h} \binom{\vertices_n-h}{(d_n(\ell)-m_\ell)_{\ell\ge 0}}}{\vertices_n \frac{\varrho_n}{\vertices_n} \binom{\vertices_n}{(d_n(\ell))_{\ell\ge 0}}}
\\
&= \frac{\varrho_n + \sum_{i=1}^h (k_i - 1)}{\vertices_n-h} \frac{(\vertices_n-h)!}{\vertices_n!} \prod_{\ell \ge 1} \frac{d_n(\ell)!}{(d_n(\ell)-m_\ell)!}.
\end{align*}
Let us set
\begin{align*}
P((k_i, j_i)_{i \le h}) 
&= \Pr{\bigcap_{i \le h} \left\{(\xi_{d_n}(i), \chi_{d_n}(i)) = (k_i, j_i)\right\}} 
\\
&= \frac{\prod_{\ell \ge 1} \ell^{- m_\ell} (\ell d_n(\ell)) (\ell d_n(\ell) - 1) \cdots (\ell d_n(\ell) - m_\ell + 1)}{\edges_n (\edges_n - 1) \cdots (\edges_n - h + 1)}
\\
&= \frac{(\edges_n - h)!}{\edges_n!} \prod_{\ell \ge 1} \ell^{- m_\ell} \frac{(\ell d_n(\ell))!}{(\ell d_n(\ell) - m_\ell)!}.
\end{align*}
Then the probability that $\Cont(u_n) = (k_i, j_i)_{i=1}^h$ equals $P((k_i, j_i)_{i \le h})$ times
\[\frac{\varrho_n + \sum_{i=1}^h (k_i - 1)}{\vertices_n-h} \frac{(\vertices_n-h)!}{\vertices_n!}
\frac{\edges_n!}{(\edges_n - h)!} 
\left(\prod_{\ell \ge 1} \ell^{m_\ell} \frac{d_n(\ell)! (\ell d_n(\ell) - m_\ell)!}{(d_n(\ell)-m_\ell)! (\ell d_n(\ell))!}\right)
.\]
Observe that $\vertices_n \ge \edges_n + 1$, so
\[\frac{(\vertices_n-h)!}{\vertices_n!} \frac{\edges_n!}{(\edges_n - h)!}
= \frac{\vertices_n-h}{\vertices_n} \prod_{i=0}^{h-1} \frac{\edges_n - i}{\vertices_n - 1 - i}
\le \frac{\vertices_n-h}{\vertices_n}.\]
Moreover, we claim that $\ell^{m_\ell} \frac{d_n(\ell)! (\ell d_n(\ell) - m_\ell)!}{(d_n(\ell)-m_\ell)! (\ell d_n(\ell))!} \le 1$ for every $\ell \ge 1$. Indeed, it equals $1$ when $\ell=1$; we suppose next that $\ell \ge 2$, so in particular $m_\ell \le d_n(\ell) \le \ell d_n(\ell) / 2$. It is simple to check that for every $x \in [0, 1/2]$, we have $(1-x)^{-1} \le 2^{2x}$, and for every $x \in [0, 1]$, we have $1-x \le 2^{-x}$. It follows that for every $\ell \ge 2$ such that $d_n(\ell) \ne 0$,
\begin{align*}
\ell^{m_\ell} \frac{d_n(\ell)! (\ell d_n(\ell)-m_\ell)!}{(d_n(\ell)-m_\ell)! (\ell d_n(\ell))!}
&= \frac{d_n(\ell)!}{(d_n(\ell)-m_\ell)! d_n(\ell)^{m_\ell}} \frac{(\ell d_n(\ell)-m_\ell)! (\ell d_n(\ell))^{m_\ell}}{(\ell d_n(\ell))!}
\\
&= \prod_{i = 0}^{m_\ell-1} \frac{d_n(\ell) - i}{d_n(\ell)} \frac{\ell d_n(\ell)}{\ell d_n(\ell) - i}
\\
&\le \prod_{i = 0}^{m_\ell-1} 
\exp\left(\ln 2 \left(-\frac{i}{d_n(\ell)} + \frac{2i}{\ell d_n(\ell)}\right)\right),
\end{align*}
which is indeed bounded by $1$ for $\ell \ge 2$. 
We have thus shown that
\[\Pr{\Cont(u_n) = (k_i, j_i)_{i=1}^h} \le \frac{\varrho_n + \sum_{i=1}^h (k_i - 1)}{\vertices_n} P((k_i, j_i)_{i \le h}),\]
which corresponds to our claim in the case $q=1$.
\end{proof}

\subsection{Proof of the multi-point decomposition}

We next turn to the proof in the general case; the difference is that one has to deal with the branchpoints of the reduced forest, which, we recall, are not considered in the vector of content.

\begin{proof}[Proof of Lemma~\ref{lem:multi_epines_sans_remise} for $q \ge 2$]
Sample $x_{n,1}, \dots, x_{n,q}$ i.i.d. uniformly random vertices in $\fn$ and assume that none is an ancestor of another. As in the case $q=1$, there is a one-to-one correspondence between the plane forest $\fn$ and $x_{n,1}, \dots, x_{n,q}$ on the one hand, and on the other hand the plane forest obtained by removing their strict ancestors, the vector $\Cont(x_{n,1}, \dots, x_{n,q})$, and the following data: first the number $c$ of trees containing at least one of the $x_{n,i}$'s and the knowledge of which ones, second for each of the $b$ branchpoints: their total number $r$ of children in $\fn$, their number $d$ of children which are ancestors of at least one $x_{n,j}$, and the relative positions $z_i$'s of these children.

Therefore, fix $(k_i, j_i)_{i=1}^h$, also $c \in \{1, \dots, q\}$, $b \in \{0, \dots, q-1\}$, and for every $p \in \{1, \dots, b\}$, fix $r(p) \ge d(p) \ge 2$ and integers $1 \le z_{p,1} < \dots < z_{p, d(p)} \le r(p)$, and let us consider the following event: first $\Cont(x_{n,1}, \dots, x_{n,q}) = (k_i, j_i)_{i=1}^h$, second, in the forest spanned by $x_{n,1}, \dots, x_{n,q}$, we have $c$ trees, $b$ branchpoints, for every $p \le b$, the $p$'th branchpoint in lexicographical order has $r(p)$ children in total in the original forest $\fn$, amongst which $d(p)$ belong to the reduced forest, and the relative positions of the latter are given by the $z_{p,i}$'s.
If we set $m_0 = 0$ and for every $\ell \ge 1$, we let $m_\ell = \#\{1 \le i \le h : k_i = \ell\}$ and $\overline{m}_\ell = m_\ell + \sum_{p=1}^b \ind{r(p) = \ell}$, then on this event, the complement of the reduced forest is a forest with degree sequence $(d_n(\ell)-\overline{m}_\ell)_{\ell\ge 0}$; note that it contains $R = \varrho_n - c + q + \sum_{p=1}^b (r(p) - d(p)) + \sum_{i=1}^h (k_i - 1)$ trees and $\vertices_n - h - b$ vertices. Therefore the probability of our event is given by
\begin{align*}
\frac{\binom{\varrho_n}{c} \#\ensembles{F}(d_n - \overline{\m})}{\vertices_n^q \#\ensembles{F}(d_n)}
&= \frac{\binom{\varrho_n}{c} \frac{R}{\vertices_n-h-b} \binom{\vertices_n-h-b}{(d_n(\ell)-\overline{m}_\ell)_{\ell\ge 0}}}{\vertices_n^q \frac{\varrho_n}{\vertices_n} \binom{\vertices_n}{(d_n(\ell))_{\ell\ge 0}}}
\\
&= \frac{\binom{\varrho_n}{c} R}{\varrho_n \vertices_n^{q-1} (\vertices_n-h-b)}
\frac{(\vertices_n-h-b)!}{\vertices_n!}
\prod_{\ell \ge 1} \frac{(d_n(\ell))!}{(d_n(\ell)-\overline{m}_\ell)!}.
\end{align*}
Now as previously, 
\[\frac{(\vertices_n-h-b)!}{\vertices_n!}
\le \frac{(\vertices_n-h-b)!}{(\vertices_n - h)!} \frac{\vertices_n-h}{\vertices_n} \frac{(\edges_n - h)!}{\edges_n!}
= \frac{(\vertices_n-h-b)!}{\vertices_n (\vertices_n - h - 1)!} \frac{(\edges_n - h)!}{\edges_n!}.\]
Next,
\begin{align*}
\prod_{\ell \ge 1} \frac{(d_n(\ell))!}{(d_n(\ell)-\overline{m}_\ell)!}
&= \prod_{\ell \ge 1} \frac{(d_n(\ell))!}{(d_n(\ell)-m_\ell)!}
\prod_{\ell \ge 1} \frac{(d_n(\ell)-m_\ell)!}{(d_n(\ell)-m_\ell-\sum_{p=1}^b \ind{r(p) = \ell})!}
\\
&\le \prod_{\ell \ge 1} \frac{(d_n(\ell))!}{(d_n(\ell)-m_\ell)!}
\prod_{p=1}^b d_n(r(p)).
\end{align*}
Finally, we have seen that
\[\P\bigg(\bigcap_{i \le h} \left\{(\xi_{d_n}(i), \chi_{d_n}(i)) = (k_i, j_i)\right\}\bigg)
= \frac{(\edges_n - h)!}{\edges_n!} \prod_{\ell \ge 1} \ell^{- m_\ell} \frac{(\ell d_n(\ell))!}{(\ell d_n(\ell) - m_\ell)!},\]
which we shall denote by $P((k_i, j_i)_{i \le h})$.
We infer that
\begin{align*}
&\frac{\binom{\varrho_n}{c} \#\ensembles{F}(d_n - \overline{\m})}{\vertices_n^q \#\ensembles{F}(d_n)}
\\
&\le \frac{\binom{\varrho_n}{c} R}{\varrho_n \vertices_n^{q-1} (\vertices_n-h-b)}
\frac{(\vertices_n-h-b)!}{\vertices_n (\vertices_n - h - 1)!} \frac{(\edges_n - h)!}{\edges_n!}
\prod_{\ell \ge 1} \frac{(d_n(\ell))!}{(d_n(\ell)-m_\ell)!}
\prod_{p=1}^b d_n(r(p))
\\
&\le \frac{\binom{\varrho_n}{c} R}{\varrho_n \vertices_n^q} \frac{(\vertices_n-h-b-1)!}{(\vertices_n - h - 1)!}
\underbrace{\prod_{\ell \ge 1} \ell^{m_\ell} \frac{(\ell d_n(\ell) - m_\ell)! (d_n(\ell))!}{(\ell d_n(\ell))! (d_n(\ell)-m_\ell)!}}_{\displaystyle \le 1}
\prod_{p=1}^b d_n(r(p))
\cdot P((k_i, j_i)_{i \le h}),
\end{align*}
where the fact that the first product is bounded by $1$ was shown at the very end of the proof of the case $q=1$. 

Since $c \ge 1$, $b \le q-1$, and $2 \le d(p) \le r(p) \le \Delta_n$, we have
\[- c + q + \sum_{p=1}^b (r(p) - d(p))
\le - 1 + q + b (\Delta_n-1)
\le (q-1) \Delta_n,\]
hence
\[R = \varrho_n - c + q + \sum_{p=1}^b (r(p) - d(p)) + \sum_{i=1}^h (k_i - 1)
\le \varrho_n + (q-1) \Delta_n + \sum_{i=1}^h (k_i - 1).\]
Moreover, from our assumption that $h, q \le \vertices_n/4$,
\[\frac{(\vertices_n-h-b-1)!}{(\vertices_n - h - 1)!}
\le \left(\frac{1}{\vertices_n - h - b}\right)^b
\le \left(\frac{2}{\vertices_n}\right)^b,\]
and finally, since $\binom{\varrho_n}{c} \le \varrho_n^c$, we obtain
\[\frac{\binom{\varrho_n}{c} \#\ensembles{F}(d_n - \overline{\m})}{\vertices_n^q \#\ensembles{F}(d_n)}
\le 
2^b \varrho_n^c \frac{\varrho_n + (q-1) \Delta_n + \sum_{i=1}^h (k_i - 1)}{\varrho_n \vertices_n^{q+b}} P((k_i, j_i)_{i \le h})
\cdot \prod_{p=1}^b d_n(r(p))
.\]

On the left is the probability that $\Cont(x_{n,1}, \dots, x_{n,q}) = (k_i, j_i)_{i=1}^h$ and that the reduced forest has $c$ trees, $q$ leaves, and $b$ branchpoints and that for every $p \le b$, the $p$'th branchpoint has $r(p)$ children, amongst which $d(p) \ge 2$ belong to the reduced forest, with relative positions given by $1 \le z_{p,1} < \dots < z_{p, d(p)} \le r(p)$. 
We now want to sum over all possible $z$'s, all vectors $d$'s and $r$'s; note that 
the quantities on the right before the product sign do not depend on them so we only focus on the term $\prod_{p=1}^b d_n(r(p))$. First note that the sum of this product over all the possible $z$'s is bounded by
\begin{align*}
\prod_{p=1}^b \binom{r(p)}{d(p)} d_n(r(p))
&\le \prod_{p=1}^b \Delta_n^{d(p)-2} r(p)(r(p) - 1) d_n(r(p))
\\
&\le \Delta_n^{q - c - b} \prod_{p=1}^b r(p)(r(p) - 1) d_n(r(p)),
\end{align*}
where for the last inequality, we note that, since the $d(p)$'s are the number of children of the branchpoints in the reduced forest, which contains $c$ trees and $q$ leaves, then we have $q = c + \sum_{p=1}^b (d(p)-1) = c + b + \sum_{p=1}^b (d(p)-2)$. 
Note also the very crude bound: there are less than $b q \le q^2$ such vectors $(d(1), \dots, d(b))$. We finally want to sum the last bound times $q^2$ over all the vectors $(r(1), \dots, r(b))$; we have
\[\sum_{r(1), \dots, r(b) \le \Delta_n} \prod_{p=1}^b r(p)(r(p) - 1) d_n(r(p))
= \bigg(\sum_{r \le \Delta_n} r (r-1) d_n(r)\bigg)^b
= \sigma_n^{2b}.\]
Since $\Delta_n \ge 2$, then $\Delta_n^2 \le 2 \Delta_n (\Delta_n - 1) \le 2\sigma_n^2$, so finally, since $q - c - b \le 0$, then $2^{(q - c - b)/2} \le 1$ and so
\[\sum_{r(1), \dots, r(b) \le \Delta_n} q^2 \prod_{p=1}^b \binom{r(p)}{d(p)} d_n(r(p))
\le q^2 \Delta_n^{q - c - b} \sigma_n^{2b}
\le q^2 \sigma_n^{q - c + b}.\]
We conclude that the probability that $\Cont(x_{n,1}, \dots, x_{n,q}) = (k_i, j_i)_{i=1}^h$ and the reduced forest has $q$ leaves, $c$ trees, and $b$ branchpoints is bounded by
\begin{multline*}
2^b \varrho_n^c \frac{\varrho_n + (q-1) \Delta_n + \sum_{i=1}^h (k_i - 1)}{\varrho_n \vertices_n^{q+b}} P((k_i, j_i)_{i \le h}) q^2 \sigma_n^{q - c + b}
\\
= q^2 2^b \left(\frac{\sigma_n}{\vertices_n}\right)^{q+b} \left(\frac{\varrho_n}{\sigma_n}\right)^{c-1} \frac{\varrho_n + (q-1) \Delta_n + \sum_{i=1}^h (k_i - 1)}{\sigma_n} P((k_i, j_i)_{i \le h}),
\end{multline*}
and the proof is complete since $b \le q-1$.
\end{proof}

%%%%%%%%%%%%%%%%%%%%%%%%%%%%%%%%%%%%%%%%%%%%%%%%%%%%%
%%%%%%%%%%%%%%%%%%%%%%%%%% BIBLIOGRAPHIE %%%%%%%%%%%%%%%%%
%%%%%%%%%%%%%%%%%%%%%%%%%%%%%%%%%%%%%%%%%%%%%%%%%%%%%

{\linespread{1}\selectfont%\small
%\bibliographystyle{alpha}
%\bibliography{biblio.bib}

}

\end{document}